\documentclass[11pt,reqno]{amsart}

\usepackage[T1]{fontenc}
\usepackage{lmodern}
\usepackage{microtype}
\usepackage{amsmath,amssymb,amsthm,mathtools}
\usepackage{enumitem}
\usepackage[colorlinks=true,linkcolor=blue,citecolor=blue,urlcolor=blue]{hyperref}
\hypersetup{
 pdftitle={Parameter-uniform Robin uniqueness on large dilations},
 pdfauthor={Sophie Sun}
}

\numberwithin{equation}{section}

\newtheorem{theorem}{Theorem}[section]
\newtheorem{proposition}[theorem]{Proposition}
\newtheorem{lemma}[theorem]{Lemma}
\newtheorem{corollary}[theorem]{Corollary}
\theoremstyle{definition}

\theoremstyle{remark}
\newtheorem{remark}[theorem]{Remark}

\newcommand{\R}{\mathbb R}
\newcommand{\Hh}{\mathbb H}
\newcommand{\cN}{\mathcal N}
\newcommand{\cU}{\mathcal U}
\newcommand{\dist}{\operatorname{dist}}

\title[Parameter-uniform Robin uniqueness]
{Parameter-uniform Robin uniqueness on large dilations}

\author{Sophie Sun}
\email{sophiesun1@yeah.net}

\subjclass[2020]{Primary 35J61; Secondary 35B09, 35B30, 35B40, 35P15}
\keywords{semilinear elliptic equations, Robin boundary conditions, boundary layers,
large-domain asymptotics, spectral convergence, curvature correction}

\begin{document}

\begin{abstract}
Berestycki and Graham proved large-dilation uniqueness for bounded positive
solutions of
\[
 -\Delta u=f(u)\quad\hbox{in }\kappa\Omega,
 \qquad u+\alpha\partial_\nu u=0\quad\hbox{on }\partial(\kappa\Omega),
\]
when $\alpha$ is fixed, and remarked that the dilation threshold should not
depend on $\alpha$.  We show that it does not, including at the Dirichlet and
Neumann endpoints, for possibly unbounded uniformly $C^{2,\gamma}$ domains.
The half-space linearizations retain a common positive spectral gap across the
compactified boundary parameter.  The remaining endpoint difficulty is nonlinear
compactness at the Dirichlet end, where the Robin coefficient diverges.  Rescaling
at its reciprocal scale makes the limiting equation harmonic.  Any trace that
persisted in this limit would give a bounded harmonic half-space solution of
$\partial_\nu v+v=0$, which a Liouville lemma rules out.  Combining this endpoint
compactness with the half-space gap and localization gives uniform large-dilation
uniqueness.
When $\partial\Omega\neq\varnothing$, we further obtain convergence of the
spectral bottom to its half-space value with error $O(\kappa^{-1/2})$.
For bounded $C^{4,\gamma}$ domains the boundary layer also has a first
mean-curvature correction, with remainder
$O(\kappa^{-1-\gamma}+\kappa^{-2})$ on each fixed boundary strip.
\end{abstract}

\maketitle

\section{Introduction}

We consider bounded positive solutions of
\begin{equation}\label{eq:alpha-problem}
 \begin{cases}
  -\Delta u=f(u)&\text{in }\kappa\Omega,\\
  u+\alpha\partial_\nu u=0&\text{on }\partial(\kappa\Omega),
 \end{cases}
\end{equation}
where $d\ge1$, $\nu$ is the outward unit normal, $\alpha\ge0$, and $\kappa>0$.
For some $\gamma\in(0,1]$ we assume
\begin{equation}\label{eq:f-assumptions}
 \begin{gathered}
 f\in C^{1,\gamma}([0,\infty)),\qquad f(0)=f(1)=0,\\
 f>0\ \text{on }(0,1),\qquad f'(0)>0,\qquad f'(1)<0,
 \qquad f<0\ \text{on }(1,\infty).
 \end{gathered}
\end{equation}
We work with the positive-reaction class of
\cite{BerestyckiGrahamStableCompact,BerestyckiGrahamStrongKPP}.  The domain
$\Omega$ is open and connected, with a uniformly $C^{2,\gamma}$ boundary in
the sense of \cite[Definition~A.1]{BerestyckiGrahamStrongKPP}; it need not be
bounded.  Solutions are understood classically, with
$u\in C^2(D)\cap C^1(\overline D)\cap L^\infty(D)$ on the domain in question;
the uniform smoothness hypothesis also includes a uniform interior-ball condition.

For each fixed Robin parameter, Proposition~8.1 of
\cite{BerestyckiGrahamStableCompact} gives uniqueness once the dilation is
large, with a threshold that may depend on $\alpha$.  Immediately after that
proposition, Berestycki and Graham explicitly suggest that the threshold should
be independent of $\alpha$, while noting that a more sophisticated argument is
needed.  Theorem~\ref{thm:main} addresses precisely this parameter-uniform
question: the dilation is chosen before the boundary parameter is specified, and
the same threshold continues to work at the two limiting boundary conditions.

\begin{theorem}[Uniform Robin large-dilation uniqueness]\label{thm:main}
Assume \eqref{eq:f-assumptions}, and let $\Omega\subset\R^d$ be uniformly
$C^{2,\gamma}$.  There exists $\kappa_0=\kappa_0(f,\Omega)>0$ such that, for
every $\kappa\ge\kappa_0$ and every finite $\alpha\ge0$,
\eqref{eq:alpha-problem} has exactly one bounded positive solution.  The same
threshold remains valid at the Neumann endpoint $\alpha=+\infty$, interpreted as
$\partial_\nu u=0$.
\end{theorem}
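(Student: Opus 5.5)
We argue by contradiction and compactify the boundary parameter: write $\beta=\alpha/(1+\alpha)\in[0,1]$, so that $\beta=0$ is Dirichlet and $\beta=1$ is Neumann, and the boundary condition becomes $(1-\beta)u+\beta\partial_\nu u=0$. Suppose the theorem fails. Then there are $\kappa_n\to\infty$, $\beta_n\in[0,1]$, and two distinct bounded positive solutions $u_n\neq v_n$ of \eqref{eq:alpha-problem} on $\kappa_n\Omega$. Passing to a subsequence, $\beta_n\to\beta_*\in[0,1]$. Existence for each $(\kappa,\beta)$ is standard for $\kappa$ large, by sub/supersolutions with $0<u\le 1$ via $f<0$ on $(1,\infty)$ and a compactly supported principal-eigenfunction subsolution using $f'(0)>0$. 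So the content is uniqueness. Set $w_n=(u_n-v_n)/\|u_n-v_n\|_\infty$. It solves the linear problem $-\Delta w_n=c_n w_n$ with $c_n=\int_0^1 f'(tu_n+(1-t)v_n)\,dt$ and the same Robin condition. We aim to show that $w_n\equiv0$ for $n$ large.

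\textbf{Step 1: uniform profile description.} Every bounded positive solution is uniformly close to the half-space profile near the boundary and close to $1$ in the interior. Precisely, for every $\varepsilon>0$ there is $R$ such that $|u_n-1|<\varepsilon$ wherever $\dist(x,\partial(\kappa_n\Omega))\ge R$. Near a boundary point, after translation and rotation, $u_n$ is $\varepsilon$-close on $B_R$ to the unique positive bounded one-dimensional solution $\phi_{\beta}$ of the half-line problem, uniformly in $\beta_n$. This is proved by blow-up limits along arbitrary base points. Because $\partial(\kappa_n\Omega)$ flattens uniformly by the uniform $C^{2,\gamma}$ hypothesis, limits are positive bounded solutions on $\R^d$ or a half-space $\Hh$. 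On these, Liouville-type rigidity gives $1$ or the 1D profile $\phi_{\beta_*}$. Positivity in the limit must be preserved, i.e. limits are not $\equiv0$; we get this from the $f'(0)>0$ subsolution with a size independent of $\beta$.

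\textbf{Step 2: Dirichlet-end compactness (main obstacle).} When $\beta_*=0$ the Robin coefficient $(1-\beta)/\beta$ diverges, and $C^{1}$ estimates up to the boundary are not uniform. We must show that the limit satisfies the Dirichlet condition, not something weaker. We rescale by $\beta_n$ near a boundary point, setting $\tilde u_n(y)=u_n(x_0+\beta_n y)$. The equation becomes $-\Delta\tilde u_n=\beta_n^2 f(\tilde u_n)\to0$, and the boundary condition becomes the fixed condition $(1-\beta_n)\tilde u_n+\partial_\nu\tilde u_n=0$ on an increasingly flat boundary. Uniform Robin regularity applies at this scale. A limit is a bounded harmonic $v$ on $\Hh$ with $\partial_\nu v+v=0$. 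We then prove a Liouville lemma: such $v$ vanishes. To do so, take tangential Fourier/mean arguments, or note that $\partial_{x_d}v-v$ is bounded harmonic with zero trace, hence $c\,x_d$, hence $0$; this gives $v=Ce^{x_d}$ bounded only if $C=0$. Consequently $u_n\to0$ uniformly on the boundary, at rate $o(1)$. Barriers then give equicontinuity up to the boundary on unit scale, so limits satisfy the Dirichlet condition. The same argument applies to $w_n$, which is also bounded.

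\textbf{Step 3: spectral gap and localization.} For each $\beta\in[0,1]$, the linearized half-space operator $-\Delta-f'(\phi_\beta)$ with the $\beta$-Robin condition has spectral bottom $\mu(\beta)>0$. This holds because $\phi_\beta'$ (or a suitable positive supersolution built from it) is a positive strict supersolution, and $f'(1)<0$ at infinity. The map $\beta\mapsto\mu(\beta)$ is continuous on the compact set $[0,1]$, with continuity at $0$ supplied by Step 2. Hence $\inf_\beta\mu(\beta)=\mu_0>0$. On $\R^d$ the linearization about $1$ has gap $-f'(1)>0$. Now pick $x_n$ with $|w_n(x_n)|\ge1/2$ and take the limit of $w_n(\cdot+x_n)$. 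Using Steps 1–2, it is a bounded nontrivial solution of the limiting linearized problem, either on $\R^d$ about $1$ or on $\Hh$ about $\phi_{\beta_*}$. A bounded solution with positive spectral gap must vanish: combine the gap with a cutoff and exponential-weight argument, or with a maximum principle using a positive supersolution that grows at infinity. This is a contradiction, and the threshold $\kappa_0$ is independent of $\beta\in[0,1]$.
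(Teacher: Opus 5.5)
Your proposal is correct, but its decisive step differs from the paper's. The paper never blows up the difference of two solutions. It first transfers the half-space gap to the dilated domain, proving $\Lambda_{\kappa,\varrho}\ge\mu_*/2$ uniformly in $\varrho$ via Robin--Lieb localization at a fixed radius (Theorem~\ref{thm:lieb}) and the form-transfer Lemma~\ref{lem:mixed-form}. It then gets uniqueness on $D_\kappa$ itself from coalescence, the Lipschitz dependence of the form bottom on the potential, and the bounded weak maximum principle applied to $\pm(u-v)$. That route needs the \emph{uniform} gap $\mu_*=\min_\varrho\mu_\varrho>0$, and hence continuity of $\varrho\mapsto\mu_\varrho$ at the Dirichlet end. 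The paper gets this from the quadratic-form estimate of Lemma~\ref{lem:halfline-form-continuity}; it is not a consequence of your nonlinear Step~2.

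Your contradiction argument on $w_n$ is the scheme the paper uses only later, for Lemma~\ref{lem:uniform-bounded-resolvent}. It forces you to rerun the $\beta_n$-rescaling, the Liouville lemma and the boundary decay for a sign-changing function, which is fine since the Liouville lemma needs no sign. In exchange it bypasses localization and form transfer entirely. Moreover, since $c_n\to f'(\phi_{\beta_*})$ exactly in the blow-up, you only need $\mu_{\beta_*}>0$ at the one limit parameter. Your appeal to continuity and to $\inf_\beta\mu(\beta)>0$ is therefore superfluous. The paper's route buys more: a uniform spectral gap for the linearization on $D_\kappa$ (stability, not just nondegeneracy), and a localization framework that, run at radius $\sqrt\kappa$, gives the $O(\kappa^{-1/2})$ spectral convergence.

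Two minor points. First, $\phi_\beta'$ decays and is strict only at the boundary, so turning it into $\mu_\beta>0$ needs an extra step. One option is the paper's Wronskian identity against the $L^2$ ground state that exists when $\mu_\beta<|f'(1)|$, with $\beta=1$ treated directly since $\phi_1'\equiv0$. Second, your Liouville proof (reflect $\partial_dv-v$, then integrate $\partial_dv=v$) is a valid and more elementary alternative to the paper's Poisson-semigroup argument. You should note that uniform local Robin estimates for the limit problem make $\nabla v$, hence $\partial_dv-v$, bounded up to $\partial\Hh$.
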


This is the uniform-in-$\alpha$ strengthening suggested in
\cite{BerestyckiGrahamStableCompact}.  The Neumann state itself is explicit:
$u\equiv1$ is the unique bounded positive solution
\cite[Corollary~2.8]{RossiStability}.  The issue is instead to retain a
positive spectral margin for the finite-$\alpha$ states as the boundary
condition moves across the whole compactified parameter interval.  The same
margin later yields the quantitative spectral comparison and the curvature
expansion.

It is convenient to replace $\alpha$ by the compact parameter
\begin{equation}\label{eq:compact-param}
 \varrho=\frac{\alpha}{1+\alpha}\in[0,1),\qquad
 \cN_\varrho u:=\varrho\partial_\nu u+(1-\varrho)u,
\end{equation}
and adjoin $\varrho=1$.  Thus $\varrho=0$ is Dirichlet,
$0<\varrho<1$ is Robin, and $\varrho=1$ is Neumann.  This compactification is
standard in the Robin theory \cite{BerestyckiGrahamStrongKPP}.

The argument is also stable under compact variation of the reaction.  Let
$\mathfrak F$
consist of positive reactions satisfying
\eqref{eq:f-assumptions}.  We call $\mathfrak F$ \emph{compactly positive} when
\begin{equation}\label{eq:compact-reaction-family}
 \{f|_{[0,1]}:f\in\mathfrak F\}\text{ is compact in }C^1([0,1]),
 \qquad
 \sup_{f\in\mathfrak F}[f']_{C^{0,\gamma}([0,1])}<\infty.
\end{equation}
In compactness arguments we identify $\mathfrak F$ with this compact set of
restrictions, endowed with the $C^1([0,1])$ topology.

\begin{theorem}[Uniformity over compact reaction families]
\label{thm:reaction-family}
Let $\mathfrak F$ be compactly positive and let $\Omega$ be as above.  There exists
$\kappa_0=\kappa_0(\mathfrak F,\Omega)>0$ such that the conclusion of
Theorem~\ref{thm:main} holds simultaneously for every $f\in\mathfrak F$.
\end{theorem}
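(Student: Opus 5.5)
The plan is to argue by contradiction and compactness over the product of the compact parameter sets $\mathfrak F\times[0,1]$, where $\varrho\in[0,1]$ is the compactified boundary parameter from \eqref{eq:compact-param}. Suppose no uniform threshold exists. Then there are $\kappa_n\to\infty$, $f_n\in\mathfrak F$ and $\varrho_n\in[0,1]$ such that problem \eqref{eq:alpha-problem}, with $\partial(\kappa_n\Omega)$ and $\cN_{\varrho_n}$, admits two distinct bounded positive solutions $u_n\neq w_n$. After passing to subsequences, $f_n\to f_\infty$ in $C^1([0,1])$ with $f_\infty\in\mathfrak F$, and $\varrho_n\to\varrho_\infty\in[0,1]$.

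First I record uniform a priori bounds. The maximum principle, using $f<0$ on $(1,\infty)$ together with the Robin sign, gives $0<u_n,w_n\le1$. Since \eqref{eq:compact-reaction-family} controls $\|f_n\|_{C^{1,\gamma}([0,1])}$ uniformly and the domains $\kappa_n\Omega$ become flatter in $C^{2,\gamma}$, Schauder estimates for the oblique condition $\cN_\varrho$ are uniform in $\varrho$ away from the Dirichlet end, and they hold trivially at $\varrho=0$. The one delicate regime is $\varrho_n\to0$ at rates comparable to $\kappa$-independent scales, where $\alpha_n\to0$ and the Robin coefficient $1/\alpha_n$ blows up. For that regime I use the endpoint compactness described in the abstract. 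Rescaling by $\alpha_n$ turns the equation into an almost harmonic one with condition $\partial_\nu v+v=0$. A persisting boundary trace would then produce a bounded harmonic half-space solution of that condition, which the Liouville lemma excludes. This forces the boundary values of $u_n$ to vanish in the limit.

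Next, set $\phi_n=(u_n-w_n)/\|u_n-w_n\|_\infty$. It satisfies the linear problem $-\Delta\phi_n=c_n\phi_n$ with $\cN_{\varrho_n}\phi_n=0$, where $c_n=\int_0^1 f_n'(tu_n+(1-t)w_n)\,dt$. Choose points $x_n$ where $|\phi_n(x_n)|\ge1/2$ and translate them to the origin. There are two cases.

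\textbf{Interior case.} If $\dist(x_n,\partial(\kappa_n\Omega))\to\infty$, the solutions converge locally to a bounded positive entire solution of $-\Delta U=f_\infty(U)$. By the Liouville/KPP result for the positive class this is $U\equiv1$, so $c_n\to f_\infty'(1)<0$ locally. Then $\phi_n$ converges to a bounded solution of $-\Delta\phi=f_\infty'(1)\phi$ in $\R^d$, which must be zero. This contradicts $|\phi_n(0)|\ge1/2$.

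\textbf{Boundary case.} If the distance stays bounded, flatten the boundary and pass to the half-space $\Hh$. The solutions converge to the unique one-dimensional half-space layer $U_{\varrho_\infty}$; uniqueness of the half-space positive solution across $\varrho\in[0,1]$ is an earlier input. The functions $\phi_n$ converge to a bounded nonzero solution of the linearization at $U_{\varrho_\infty}$ with condition $\cN_{\varrho_\infty}$. This is ruled out by the common positive spectral gap of the half-space linearizations over $[0,1]$. Because the limit is only bounded, not $L^2$, I need the gap in the form of a bounded positive supersolution, or equivalently a strict generalized principal eigenvalue. That gives a maximum principle which kills bounded solutions.

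The main obstacle is making the localization rigorous. The limit of $\phi_n$ could be nonzero while all mass escapes along the boundary. To handle this, I first use a uniform positive supersolution $\psi_n$ on $\kappa_n\Omega$ for $-\Delta-c_n$ under $\cN_{\varrho_n}$. I build $\psi_n$ by patching the half-space gap supersolutions near the boundary with constants in the interior, where $c_n<f'(1)/2$. Continuity of the gap in $\varrho$ makes the patching uniform, and the generalized maximum principle then yields $\phi_n\equiv0$ for large $n$. For an unbounded $\Omega$, the uniform $C^{2,\gamma}$ and interior-ball hypotheses make the patching constants uniform. Finally, the Neumann endpoint is automatic because $u\equiv1$ is the unique bounded positive solution there. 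The same argument for a fixed $f$ gives Theorem~\ref{thm:main}, and the compactness in $f$ added above gives Theorem~\ref{thm:reaction-family}.
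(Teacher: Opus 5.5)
There is a genuine gap: you never prove existence. The conclusion of Theorem~\ref{thm:main} is that there is \emph{exactly one} bounded positive solution, but your contradiction hypothesis only negates ``at most one''. A failure of the uniform threshold could equally be a sequence $(\kappa_n,f_n,\varrho_n)$ with $\varrho_n<1$ for which \eqref{eq:rho-problem} has no bounded positive solution at all. Nothing in your argument rules that out; your only existence remark concerns the Neumann state $u\equiv1$. This is also a place where the family hypothesis does real work, since the existence threshold depends on $f'(0)$.

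The fix is Lemma~\ref{lem:existence} made uniform. Compactness of $\mathfrak F$ in $C^1([0,1])$ gives $a_0=\min_{f\in\mathfrak F}f'(0)>0$. Choose $B_r(x_0)\Subset\Omega$ and $R$ with $\lambda_1(B_R)<a_0$. For $\kappa\geq R/r$, the zero extension of the Dirichlet eigenfunction of $B_R(\kappa x_0)$ has zero trace, so it is admissible for every $\varrho\in[0,1]$. It gives $\lambda(-\Delta,D_\kappa,\varrho)<f'(0)$ for all $(f,\varrho)$ at once, and the Berestycki--Graham existence theorem invoked in Lemma~\ref{lem:existence} then produces a bounded positive solution. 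As written, your proposal establishes only a uniform ``at most one'' statement.

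Your uniqueness argument takes a different route from the paper's, and it works, but not for the reason you give at the end. The blow-up of the $L^\infty$-normalized difference at points with $|\phi_n(x_n)|\geq1/2$ already closes the argument. In the boundary case the limit is a bounded nonzero solution on $\Hh^d$ of the linearization at $\phi^{f_\infty}_{\varrho_\infty}$. Then $\mu^{f_\infty}_{\varrho_\infty}>0$, together with Lemmas~\ref{lem:product} and~\ref{lem:bounded-weak-maximum}, forces it to vanish. No $L^2$ norm is involved, so no mass can escape.

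Two details must be added:
\begin{itemize}
\item Nontrivial limits of $u_n,w_n$ require interior saturation uniform over $\mathfrak F$, which follows from $\inf_{f\in\mathfrak F,\,0<r\leq s}f(r)/r>0$.
\item When $\beta_{\varrho_n}\to\infty$, the $\beta_n^{-1}$ rescaling, Lemma~\ref{lem:harmonic-robin} and Lemma~\ref{lem:boundary-decay} must be applied to $\phi_n$ itself (they use no sign), exactly as in Lemma~\ref{lem:uniform-bounded-resolvent}.
\end{itemize}

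With these, the supersolution patching is superfluous, and as sketched it would not stand alone. Building $\psi_n$ on all of $\kappa_n\Omega$ needs $\sup_x|c_n-f_n'(\Phi^{f_n}_{\kappa_n,\varrho_n})|\to0$, i.e.\ uniform coalescence, which you never prove.

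The paper proves that coalescence (Propositions~\ref{prop:coalescence} and~\ref{prop:family-halfline}). It then transfers the joint gap $\widehat\mu$ to $\Lambda_{\kappa,\varrho}$ by Robin--Lieb localization and concludes with \eqref{eq:potential-lipschitz} and the maximum principle. Your direct blow-up needs only positivity of the single limiting half-line eigenvalue, and it avoids the localization and the continuity of the profiles in $f$. The paper's route in exchange yields a uniform spectral margin and linear stability of every state, which a pure blow-up argument does not provide.
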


Two uniformity questions enter the proof.  The half-space profiles are
translates of the Dirichlet layer and tend to the constant state $1$ as
$\varrho\uparrow1$, so their pointwise classification is not the issue.  What
has to persist along the whole parameter interval is stability.  In
particular, the one-dimensional linearizations satisfy
\begin{equation}\label{eq:intro-gap}
 \inf_{\varrho\in[0,1]}
 \lambda\bigl(-\partial_y^2-f'(\phi_\varrho),\R_+,\varrho\bigr)>0.
\end{equation}
No obliqueness is lost at the Neumann end.  The Dirichlet end is where a
new scale appears: writing the boundary condition as
$\partial_\nu u+\beta_\varrho u=0$ gives $\beta_\varrho\to\infty$.
We rescale by $\beta_\varrho^{-1}$.  If a boundary trace stayed away from
zero, the rescaled functions would converge to a bounded harmonic solution of
$\partial_\nu v+v=0$ in a half-space.  Section~\ref{sec:compactness} excludes
such a limit by a Liouville argument.  This is the endpoint input needed to
make the nonlinear profiles coalesce uniformly; Robin--Lieb localization then
returns the spectral gap to the dilated domain.

A fixed localization radius is enough for uniqueness, but not for a rate.
For the quantitative comparison, when $\partial\Omega\ne\varnothing$, write
\[
 d_\kappa(x)=\dist(x,\partial(\kappa\Omega)),\qquad
 \Phi_{\kappa,\varrho}(x)=\phi_\varrho(d_\kappa(x)),
\]
and set
$\mu_\varrho=\lambda(-\partial_y^2-f'(\phi_\varrho),\R_+,\varrho)$.  Then
\begin{equation}\label{eq:intro-spectral-convergence}
 \sup_{\varrho\in[0,1]}
 \left|\lambda\bigl(-\Delta-f'(\Phi_{\kappa,\varrho}),
                \kappa\Omega,\varrho\bigr)-\mu_\varrho\right|
 \le C\kappa^{-1/2}.
\end{equation}
The choice $R\asymp\sqrt\kappa$ used below comes from balancing the
Fermi-chart error $O(R/\kappa)$ with the $O(R^{-1})$ loss from truncating a
half-space quasimode.

The same boundary-layer picture admits a finer expansion when $\Omega$ is
bounded and $C^{4,\gamma}$.  In that case the first correction can be
identified explicitly.  Write $D_\kappa=\kappa\Omega$ and let
$u_{\kappa,\varrho}$ be the unique state.  In a fixed boundary strip, $\pi_\kappa(x)$ denotes the nearest
boundary point.  With $\mathcal H$ the sum of the outward principal curvatures,
define $\chi_\varrho$ as the unique $H^1(\R_+)$ solution of
\begin{equation}\label{eq:intro-curvature-response}
 \begin{cases}
 \bigl(-\partial_y^2-f'(\phi_\varrho)\bigr)\chi_\varrho=-\phi_\varrho',\\
 -\varrho\chi_\varrho'(0)+(1-\varrho)\chi_\varrho(0)=0.
 \end{cases}
\end{equation}
At $\varrho=1$ one has $\chi_1=0$.

\begin{theorem}[Parameter-uniform curvature correction]
\label{thm:curvature-correction}
Assume \eqref{eq:f-assumptions} and that $\Omega$ is bounded, connected, and
$C^{4,\gamma}$.  For every fixed $R>0$ there is $C_R>0$ such that, for all
sufficiently large $\kappa$,
\begin{align}
 &\sup_{\varrho\in[0,1]}
 \sup_{\substack{x\in D_\kappa\\d_\kappa(x)\le R}}
 \left|u_{\kappa,\varrho}(x)-\phi_\varrho(d_\kappa(x))
 -\frac{\mathcal H(\pi_\kappa(x)/\kappa)}{\kappa}
       \chi_\varrho(d_\kappa(x))\right| \notag\\
 &\hspace{35mm}\le C_R(\kappa^{-1-\gamma}+\kappa^{-2}).
 \label{eq:intro-curvature-expansion}
\end{align}
Consequently,
\begin{equation}\label{eq:intro-scaled-curvature-limit}
 \sup_{\varrho\in[0,1]}
 \sup_{\substack{x\in D_\kappa\\d_\kappa(x)\le R}}
 \left|\kappa\bigl(u_{\kappa,\varrho}(x)-\phi_\varrho(d_\kappa(x))\bigr)
 -\mathcal H(\pi_\kappa(x)/\kappa)\chi_\varrho(d_\kappa(x))\right|
 \longrightarrow0.
\end{equation}
\end{theorem}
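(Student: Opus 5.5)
We prove \eqref{eq:intro-curvature-expansion} by building an explicit approximate solution in Fermi coordinates and correcting it with the uniform spectral gap of the linearized operator.

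\textbf{Approximate solution.} Fix a strip width $\delta>0$ below the reach of $\partial\Omega$, so that in $\{d_\kappa<\kappa\delta\}$ the coordinates $(s,y)=(\pi_\kappa(x),d_\kappa(x))$ are $C^{3,\gamma}$. In these coordinates
\[
 \Delta=\partial_y^2-\kappa^{-1}H_\kappa(s,y)\,\partial_y+\Delta_{\Sigma_y},
 \qquad H_\kappa(s,y)=\mathcal H(s/\kappa)+O(y/\kappa),
\]
with the sign of the drift fixed by the outward-curvature convention. Set
\[
 U_\kappa=\phi_\varrho(y)+\kappa^{-1}\mathcal H(s/\kappa)\chi_\varrho(y),
\]
cut off smoothly at $y\asymp\kappa\delta$, and extend by $1$ in the interior. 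The order-$\kappa^{-1}$ terms of $-\Delta U_\kappa-f(U_\kappa)$ cancel exactly by \eqref{eq:intro-curvature-response}.

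The remaining residual has four sources:
\begin{itemize}[label=$\cdot$]
 \item the $O(y/\kappa^2)$ part of the mean-curvature drift;
 \item the drift acting on $\kappa^{-1}\chi_\varrho$, which is $O(\kappa^{-2})$;
 \item the tangential Laplacian of $\kappa^{-1}\mathcal H(s/\kappa)\chi_\varrho$, which is $O(\kappa^{-3})$;
 \item the quadratic Taylor remainder of $f$, which is $O(\kappa^{-1-\gamma})$ since $f'\in C^{0,\gamma}$.
\end{itemize}
Each term carries the factor $|\phi_\varrho'|$, $|\chi_\varrho|$ or their derivatives. Uniformly in $\varrho\in[0,1]$, these decay exponentially in $y$. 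This follows from the gap \eqref{eq:intro-gap} and $f'(1)<0$, and makes $y/\kappa^2$ harmless. The boundary condition $\cN_\varrho U_\kappa=0$ holds exactly, because the normal is $-\partial_y$ and $\chi_\varrho$ satisfies the Robin condition in \eqref{eq:intro-curvature-response}. Altogether the residual is $O(\kappa^{-1-\gamma}+\kappa^{-2})$ in $L^\infty$, with exponential weight.

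We must also check that $\chi_\varrho$ is well defined, bounded and decaying uniformly, and continuous in $\varrho$, including $\varrho\to1$ where $\chi_1=0$. This follows by inverting $-\partial_y^2-f'(\phi_\varrho)$ with the gap \eqref{eq:intro-gap} and using continuity of $\varrho\mapsto\phi_\varrho$. Near $\varrho=0$, the Robin form of the condition is handled by the variational formulation in the compact parameter.

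\textbf{Correction by the linearized operator.} Write $u=u_{\kappa,\varrho}=U_\kappa+w$. Then
\[
 Lw:=-\Delta w-f'(U_\kappa)w=-\mathcal R+Q(w),\qquad \cN_\varrho w=0,
\]
where $\mathcal R$ is the residual above and $Q$ is quadratic in $w$. The spectral comparison \eqref{eq:intro-spectral-convergence}, applied to $\Phi_{\kappa,\varrho}$ and perturbed by $O(\kappa^{-1})$, gives $\lambda(L,D_\kappa,\varrho)\ge\mu_*/2>0$ uniformly in $\varrho$ for large $\kappa$. The positivity of $L$ then gives an $L^\infty$ bound, not just an $L^2$ bound: since $f'(U_\kappa)\to f'(1)<0$ away from a boundary layer, we can build a positive supersolution $\psi$ of $L$ with $\cN_\varrho\psi\ge0$. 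Take the principal eigenfunction of a localized half-space problem near the boundary, glued to the constant $1$ in the interior; its uniform positivity comes from the gap. The comparison principle then yields $\|L^{-1}g\|_\infty\le C\|g\|_\infty$ uniformly in $\varrho$.

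A contraction in a small $L^\infty$ ball produces a solution $U_\kappa+w$ with $\|w\|_\infty\le C(\kappa^{-1-\gamma}+\kappa^{-2})$. This solution is positive, hence equal to $u_{\kappa,\varrho}$ by Theorem~\ref{thm:main}. Restricting to $d_\kappa\le R$ gives \eqref{eq:intro-curvature-expansion}. Multiplying by $\kappa$ and using $\gamma>0$ gives \eqref{eq:intro-scaled-curvature-limit}.

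\textbf{Main obstacle.} The hardest step is the uniform $L^\infty$ invertibility of $L$ across $\varrho\in[0,1]$, in particular at the Dirichlet end, where the Robin coefficient $\beta_\varrho$ blows up and barrier boundary conditions degenerate. My first approach is to argue by contradiction. Take a sequence $(\kappa_n,\varrho_n)$ with $\|w_n\|_\infty=1$ and $\|Lw_n\|_\infty\to0$, and blow up at the points where $|w_n|$ is nearly maximal.
\begin{itemize}[label=$\cdot$]
 \item Interior maxima give a bounded solution of $-\Delta w-f'(1)w=0$ on $\R^d$, which must vanish.
 \item Boundary-layer maxima give a bounded half-space solution, excluded by \eqref{eq:intro-gap}.
 \item When $\varrho_n\to0$, traces that might persist are excluded by the rescaling-at-$\beta_\varrho^{-1}$ Liouville argument of Section~\ref{sec:compactness}.
\end{itemize}
The $C^{4,\gamma}$ hypothesis is used only to make the coefficients of the Fermi-chart Laplacian $C^{2,\gamma}$. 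This gives Schauder control of $\chi_\varrho$-dependent terms and the exact $O(y\kappa^{-2})$ form of the mean-curvature remainder.
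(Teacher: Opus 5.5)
Your ansatz and residual bookkeeping match the paper's. The paper cuts off at $L_\kappa=M\log\kappa$ rather than at $y\asymp\kappa\delta$; either works because of the uniform exponential tails. Your blow-up route to uniform $L^\infty$ invertibility is exactly Lemma~\ref{lem:uniform-bounded-resolvent}.

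The genuine difference is the last step. The paper never constructs a solution. It takes the unique state and sets $w=u_{\kappa,\varrho}-A_{\kappa,\varrho}$. It then linearizes exactly with $q=\int_0^1f'(A_{\kappa,\varrho}+tw)\,dt$ and uses Proposition~\ref{prop:coalescence} to place $q$ within $\delta$ of $f'(\Phi_{\kappa,\varrho})$. Finally it applies the resolvent bound, which is therefore proved for \emph{every} potential in that $\delta$-neighbourhood; this is why its proof needs a weak-star extraction. Your contraction only needs the bound for the single potential $f'(U_\kappa)$, which converges uniformly in every blow-up, so that lemma gets slightly simpler. The price is that you must identify your fixed point with $u_{\kappa,\varrho}$.

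That identification is asserted, not proved. ``This solution is positive'' does not follow from smallness of $\|w\|_\infty$ when $\varrho$ is near $0$. There $U_\kappa$ vanishes on $\partial D_\kappa$ (at $\varrho=0$, since $\phi_0(0)=\chi_0(0)=0$) or is $O(\varrho)$, so $U_\kappa+w$ could dip below zero in a thin layer. You also need an extension of $f$ below $0$ even to define your map.

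You can prove positivity directly. For example, $\{U_\kappa+w<0\}\subset\{d_\kappa\le C\|w\|_\infty\}$, and a Poincar\'e inequality along normals in that layer, together with the nonnegative Robin term, kills $(U_\kappa+w)_-$. Alternatively you can bypass it. By Proposition~\ref{prop:coalescence}, $\|u_{\kappa,\varrho}-U_\kappa\|_\infty\to0$ uniformly in $\varrho$. So $u_{\kappa,\varrho}-U_\kappa$ is a bounded solution of your fixed-point equation lying in the contraction ball. Bounded solutions of $Lw=g$ are unique by Lemma~\ref{lem:bounded-weak-maximum}, hence it is your fixed point. At that point the contraction is superfluous and you have recovered the paper's argument.

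Two smaller remarks. First, the principal-eigenfunction barrier you first describe does not work at the Dirichlet end. There $L\psi\approx\mu_\varrho\psi$ vanishes on $\partial D_\kappa$ and cannot dominate a general $g$. A transplanted bounded half-line solution of $L_\varrho\psi=1$ is the natural barrier, but you can simply keep the blow-up argument. Second, replacing $f'(\Phi_{\kappa,\varrho})$ by $f'(U_\kappa)$ costs $O(\kappa^{-\gamma})$, not $O(\kappa^{-1})$, since $f'$ is only H\"older. This is harmless for the gap.
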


There is a substantial literature on curvature corrections for Robin singular
perturbations; among the relevant references are
\cite{HowesRobinNeumann,BerestyckiWeiRobin,KimRobinAsymptotic,LyuLinPoissonBoltzmann,LeeMoonYangNonlocal}.
The appearance of mean curvature itself is therefore not the novelty claimed
here.  The nearby singular-perturbation results concern different nonlinearities
and scalings, typically with a small diffusion parameter and a fixed or
simultaneously scaled Robin coefficient.  In the present problem the small
boundary scale is created by geometric dilation, and both the response profile
and the remainder are controlled uniformly over the full compactified
Dirichlet--Robin--Neumann family.  For recent work on Robin uniqueness and
half-line asymptotics see
\cite{ChenGrossiLiRobin,ChenGrossiLiRobinAsymptotics,ChuWongHalfLine}.
In particular, Chen, Grossi, and Li study power nonlinearities on bounded smooth
domains, with the Robin parameter itself as a principal regime; their hypotheses
and scaling are different from the positive-reaction large-dilation problem here.
Hai, Shivaji, and Wang \cite{HaiShivajiWangRobin} prove a related
parameter-uniform result for
$-\Delta u=\lambda f(u)$ on a fixed bounded domain: their large-$\lambda$
threshold is independent of the Robin coefficient.  After rescaling, this is
closely analogous in its Robin-uniform aspect, but their hypotheses are
sublinear/concavity-type and do not cover the positive-reaction class
\eqref{eq:f-assumptions} or the stable boundary layer considered here.

Sections~\ref{sec:prelim}--\ref{sec:localization} develop the spectral and
compactness input and transfer the half-space gap to large dilations.
Section~\ref{sec:proof-main} proves uniqueness.  The curvature expansion in
Section~\ref{sec:curvature} is a separate refinement and is not used in the
main theorem.

\section{Spectral setup for the compactified parameter}\label{sec:prelim}

A single quadratic-form notation will cover all three boundary conditions.
For a domain $D$ and $\varrho\in[0,1]$, let $\cN_\varrho$ be the operator from
\eqref{eq:compact-param}.  When $\varrho\in(0,1]$, set
\begin{equation}\label{eq:beta}
 \beta_\varrho:=\frac{1-\varrho}{\varrho}\in[0,\infty).
\end{equation}
For $\varrho>0$, the condition $\cN_\varrho u=0$ is equivalent to
$\partial_\nu u+\beta_\varrho u=0$.  We interpret
$\beta_0=+\infty$ as Dirichlet data.

Let $V\in L^\infty(D)$ be real.  For $\varrho>0$ define the closed
quadratic form
\begin{equation}\label{eq:robin-form-global}
 \mathfrak q_{V,\varrho}[\psi]
 :=\int_D\bigl(|\nabla\psi|^2+V\psi^2\bigr)
   +\beta_\varrho\int_{\partial D}\psi^2,
 \qquad \psi\in H^1(D),
\end{equation}
and at $\varrho=0$ use the same expression without the boundary term on
$H_0^1(D)$.  Uniform smoothness gives the global trace bound needed for the
Robin form.  We write
\begin{equation}\label{eq:form-bottom-definition}
 \lambda(-\Delta+V,D,\varrho)
 :=\inf_{\psi\ne0}
 \frac{\mathfrak q_{V,\varrho}[\psi]}{\|\psi\|_{L^2(D)}^2},
\end{equation}
with the appropriate form domain understood.  This convention makes sense for
bounded measurable potentials on both bounded and unbounded domains.  If $V$ is
uniformly H\"older continuous, the number in
\eqref{eq:form-bottom-definition} agrees with the generalized principal
eigenvalue of
\cite[Proposition~2.1(v)]{BerestyckiGrahamStrongKPP}; the Dirichlet theory on
unbounded domains is also treated in \cite{BerestyckiRossi}.  Every later appeal to generalized-principal-eigenvalue theory is made in this
regime.  Three basic form facts will be used repeatedly:
\begin{enumerate}[label=(\roman*)]
 \item monotonicity under Dirichlet truncation;
 \item the perturbation estimate
 \begin{equation}\label{eq:potential-lipschitz}
  \bigl|\lambda(-\Delta+V_1,D,\varrho)
  -\lambda(-\Delta+V_2,D,\varrho)\bigr|
  \leq\|V_1-V_2\|_{L^\infty(D)};
 \end{equation}
 \item positivity of $\lambda(-\Delta+V,D,\varrho)$ implies the bounded
 maximum principle for $\Delta-V$ with homogeneous $\cN_\varrho$ boundary
 data.  The classical formulation in
 \cite[Proposition 3.1]{BerestyckiGrahamStrongKPP} assumes H\"older
 coefficients and classical solutions; the bounded-measurable, weak-solution
 version used below is recorded in Lemma~\ref{lem:bounded-weak-maximum}.
\end{enumerate}
The underlying Robin form theory on general domains can be found in
\cite{Daners}.

\begin{lemma}[Bounded weak maximum principle]
\label{lem:bounded-weak-maximum}
Let $D$ be a uniformly smooth domain, let $V\in L^\infty(D)$ be real, and
suppose
\[
 \lambda(-\Delta+V,D,\varrho)>0.
\]
If $w\in H^1_{\rm loc}(\overline D)\cap L^\infty(D)$ is a weak
supersolution of $(-\Delta+V)w\geq0$ with homogeneous $\cN_\varrho$
boundary condition, then $w\geq0$.  Here ``weak'' means the variational
inequality associated with the Robin form when $\varrho>0$; at
$\varrho=0$ we require the negative part of the trace to vanish and use
zero-trace test functions.
\end{lemma}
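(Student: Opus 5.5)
The natural approach is to test the variational inequality against the negative part $w^-=\max(-w,0)$. The difficulty is that $D$ may be unbounded and $w$ is only locally $H^1$, so $w^-$ need not be admissible. We therefore localize with cutoffs and use boundedness of $w$ to control the resulting errors.

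\textbf{Step 1 (cutoffs).} Fix $\eta_R(x)=\eta(x/R)$ with $\eta\in C_c^\infty(\R^d)$, $0\le\eta\le1$, $\eta=1$ on $B_1$, $\operatorname{supp}\eta\subset B_2$, and $|\nabla\eta_R|\le C/R$. Set $\psi_R=\eta_R w^-$. Since $w\in H^1_{\rm loc}(\overline D)$, we have $\psi_R\in H^1(D)$ with bounded support. For $\varrho>0$ it is an admissible test function. At $\varrho=0$ the hypothesis that the negative part of the trace vanishes gives $\psi_R\in H^1_0(D)$. We then test the weak inequality with $\eta_R^2w^-\ge0$:
\[
\int_D\nabla w\cdot\nabla(\eta_R^2w^-)+Vw\,\eta_R^2w^-+\beta_\varrho\int_{\partial D}w\,\eta_R^2w^-\ \ge0 .
\]
Because $w\,w^-=-(w^-)^2$ and $\nabla w\cdot\nabla w^-=-|\nabla w^-|^2$ almost everywhere, the standard IMS-type identity
\[
\int|\nabla(\eta_Rw^-)|^2=\int\eta_R^2|\nabla w^-|^2+2\eta_Rw^-\nabla\eta_R\cdot\nabla w^-+|\nabla\eta_R|^2(w^-)^2
\]
rearranges the inequality into
\[
\mathfrak q_{V,\varrho}[\psi_R]\le\int_D|\nabla\eta_R|^2(w^-)^2 .
\]
The boundary term enters with the correct sign, and at $\varrho=0$ it is absent.

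\textbf{Step 2 (spectral lower bound).} Let $\lambda_0:=\lambda(-\Delta+V,D,\varrho)>0$. By the definition \eqref{eq:form-bottom-definition},
\[
\lambda_0\int\eta_R^2(w^-)^2\le\int|\nabla\eta_R|^2(w^-)^2\le \frac{C}{R^2}\int_{D\cap(B_{2R}\setminus B_R)}(w^-)^2 .
\]
When $D$ is bounded, taking $R$ large makes the right side vanish, so $w^-=0$. When $D$ is unbounded, $w\in L^\infty$ only gives a right side of order $R^{d-2}\|w\|_\infty^2$, which does not decay when $d\ge2$. I expect this to be the main obstacle.

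\textbf{Step 3 (unbounded domains).} To handle the growth, I would use exponentially decaying weights in place of compactly supported ones. Take $\eta=e^{-\epsilon\langle x-x_0\rangle}$ with $\langle x\rangle=\sqrt{1+|x|^2}$, further truncated by a compact cutoff that is removed first. This truncation step is legitimate because $\int e^{-2\epsilon\langle x\rangle}(w^-)^2<\infty$ by boundedness, and dominated convergence controls the cutoff error. Since $|\nabla\eta|\le\epsilon\eta$, Step 1 yields
\[
(\lambda_0-\epsilon^2)\int\eta^2(w^-)^2\le0 .
\]
Choosing $\epsilon^2<\lambda_0$ then gives $w^-=0$ almost everywhere.

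The points to check carefully are the following. First, the cross term in the IMS identity must be absorbed exactly; this works because the identity is an equality and not a mere inequality. Second, $\eta w^-$ must lie in the form domain, including its trace: uniform smoothness gives a global trace inequality, and the exponential decay makes the boundary integral finite. Third, the Dirichlet case requires the zero-trace property of $\eta w^-$, which follows from that of $w^-$ by multiplication with a smooth bounded function.
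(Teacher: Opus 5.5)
Your proposal is correct and follows essentially the same route as the paper: you test with $\zeta^2\eta_R^2w^-$ for an exponentially decaying weight satisfying $|\nabla\zeta|\le\epsilon\zeta$ (the paper uses $\operatorname{sech}(\epsilon|x|)$), remove the compact cutoff using the boundedness of $w$, and compare $\mathfrak q_{V,\varrho}[\zeta w^-]\le\epsilon^2\|\zeta w^-\|_2^2$ with the Rayleigh lower bound. The only difference is in how the limit $R\to\infty$ is taken: the paper uses a uniform $H^1$ bound, weak compactness and closedness of the form, whereas you can apply the Rayleigh bound to the compactly supported functions $\zeta\eta_Rw^-$ and then use dominated convergence, which avoids having to check that $\zeta w^-$ itself lies in the form domain.
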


\begin{proof}
The cutoff proof behind
\cite[Proposition 3.1]{BerestyckiGrahamStrongKPP} adapts directly to the
quadratic-form setting; only boundedness of $V$ is needed.  Put
$\zeta_\epsilon(x)=\operatorname{sech}(\epsilon|x|)$.  Then
$|\nabla\zeta_\epsilon|\leq\epsilon\zeta_\epsilon$ almost everywhere.
Use $\zeta_\epsilon^2\eta_R^2w_-$ as a test function, with $\eta_R$ a
standard compactly supported cutoff.  The resulting Caccioppoli estimate
bounds $\zeta_\epsilon\eta_Rw_-$ uniformly in $H^1(D)$ as $R\to\infty$;
the terms supported where $\nabla\eta_R\neq0$ tend to zero because $w$ is
bounded and $\zeta_\epsilon$ decays exponentially.  After extraction,
$\zeta_\epsilon\eta_Rw_-$ converges weakly in $H^1$ and strongly in
$L^2$ to $\zeta_\epsilon w_-$.  Closedness of the Robin form (and of
$H_0^1(D)$ at $\varrho=0$), together with weak lower semicontinuity, then
shows that $\zeta_\epsilon w_-$ belongs to the corresponding form domain and
gives
\[
 \mathfrak q_{V,\varrho}[\zeta_\epsilon w_-]
 \leq\int_D |\nabla\zeta_\epsilon|^2w_-^2
 \leq\epsilon^2\|\zeta_\epsilon w_-\|_2^2.
\]
The homogeneous boundary condition is already encoded in the form.  The
Rayleigh inequality gives the reverse lower bound
\[
 \mathfrak q_{V,\varrho}[\zeta_\epsilon w_-]
 \geq\lambda(-\Delta+V,D,\varrho)
       \|\zeta_\epsilon w_-\|_2^2.
\]
Choosing $\epsilon^2<\lambda(-\Delta+V,D,\varrho)$ forces $w_-=0$.
\end{proof}

In the localized problems below, the portion of $\partial D$ inside $B_R(z)$
keeps the $\varrho$ condition and the spherical boundary is Dirichlet.  We
include $\partial D\cap\partial B_R(z)$ in the artificial boundary (only in
dimension one does this convention require attention).  We denote the corresponding
eigenvalue by
\[
 \lambda(-\Delta+V,D\mid B_R(z),\varrho).
\]
The intersection $D\cap B_R(z)$ may have corners or be disconnected.  In this
notation the localized eigenvalue means the bottom of the mixed quadratic form;
for a disconnected intersection it is the infimum of the form bottoms of its
connected components, and it is $+\infty$ when the intersection is empty.

A potential depending only on the normal variable separates the tangential
directions, so the half-space spectral bottom is one-dimensional.

\begin{lemma}[Half-space reduction]\label{lem:product}
Let $W\in L^\infty(\R_+)$ be real and let $\varrho\in[0,1]$.  Then
\begin{equation}\label{eq:product}
 \lambda(-\Delta+W(y),\Hh^d,\varrho)
 =\lambda(-\partial_y^2+W(y),\R_+,\varrho),
\end{equation}
where $\Hh^d=\R^{d-1}\times\R_+$.
\end{lemma}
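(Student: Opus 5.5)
We prove the two inequalities separately. Write $\mu:=\lambda(-\partial_y^2+W,\R_+,\varrho)$, $\Lambda:=\lambda(-\Delta+W(y),\Hh^d,\varrho)$, and points of $\Hh^d$ as $x=(x',y)$. The boundary $\partial\Hh^d=\{y=0\}$ has outward normal $-\partial_y$. Hence the form \eqref{eq:robin-form-global} on $\Hh^d$ splits as
\[
 \mathfrak q[\psi]=\int_{\R^{d-1}}\Bigl(\int_0^\infty\bigl(|\partial_y\psi|^2+W\psi^2\bigr)dy+\beta_\varrho\psi(x',0)^2\Bigr)dx'
 +\int_{\Hh^d}|\nabla_{x'}\psi|^2 .
\]
At $\varrho=0$ there is no boundary term, and the form domain is $H_0^1$. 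At $\varrho=1$ we have $\beta_1=0$.

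\emph{Lower bound $\Lambda\ge\mu$.} Take $\psi\in C_c^\infty(\overline{\Hh^d})$, with compact support in the open half-space when $\varrho=0$. Such functions form a form core in each case. For a.e.\ fixed $x'$, the slice $\psi(x',\cdot)$ is an admissible test function for the one-dimensional form: it lies in $H^1(\R_+)$, and in $H_0^1(\R_+)$ when $\varrho=0$. Therefore the inner bracket is at least $\mu\int_0^\infty\psi(x',y)^2dy$. Dropping the nonnegative tangential gradient term and integrating in $x'$ gives $\mathfrak q[\psi]\ge\mu\|\psi\|_2^2$. Density of the core in the form norm then gives $\Lambda\ge\mu$.

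\emph{Upper bound $\Lambda\le\mu$.} Fix $\epsilon>0$ and choose $g\in H^1(\R_+)$, compactly supported and in $H^1_0$ if $\varrho=0$, with normalized Rayleigh quotient at most $\mu+\epsilon$. Compact support can be arranged by truncation, which costs an arbitrarily small amount since $W$ is bounded. Take a tangential cutoff $h_L(x')=L^{-(d-1)/2}h(x'/L)$ with $h\in C_c^\infty(\R^{d-1})$, $\|h\|_2=1$, and test with $\psi=h_L(x')g(y)$. By separation of variables,
\[
 \frac{\mathfrak q[\psi]}{\|\psi\|_2^2}\le\mu+\epsilon+\|\nabla_{x'}h_L\|_2^2=\mu+\epsilon+L^{-2}\|\nabla h\|_2^2 .
\]
Letting $L\to\infty$ and then $\epsilon\to0$ gives $\Lambda\le\mu$. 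When $d=1$ there is no tangential variable, and the statement is trivial.

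The only point needing any care is the identification of the form domains at the endpoints. At $\varrho=0$, slices of $H^1_0(\Hh^d)$ functions lie in $H^1_0(\R_+)$; working on the dense core above avoids any trace subtlety. At $\varrho=1$ the boundary term simply vanishes. No step is a genuine obstacle: the argument is a standard tensor-product computation, valid uniformly in $\varrho\in[0,1]$.
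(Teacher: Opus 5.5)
Your proof is correct and follows the same route as the paper: for $\Lambda\ge\mu$ you slice in $x'$, apply the half-line Rayleigh inequality and drop the nonnegative tangential energy; for $\Lambda\le\mu$ you test with a product $h_L(x')g(y)$ whose tangential Dirichlet energy $L^{-2}\|\nabla h\|_2^2$ vanishes as $L\to\infty$. The only difference is that you first restrict to the dense core $C_c^\infty(\overline{\Hh^d})$ (or $C_c^\infty(\Hh^d)$ at $\varrho=0$), which makes explicit the slice-admissibility the paper takes for granted for almost every $x'$.
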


\begin{proof}
The case $d=1$ is immediate.  Suppose $d\ge2$.  Applying the half-line
Rayleigh inequality to $\psi(x',\cdot)$ for almost every $x'$ and then
integrating in the tangential variables gives one inequality, since the
remaining tangential energy is nonnegative.  Conversely, multiply a half-line
test function $h(y)$ by an $L^2$-normalized tangential cutoff $\eta_R(x')$
whose Dirichlet energy tends to zero.  Letting $R\to\infty$ gives the opposite
inequality.  The same argument applies at the Dirichlet endpoint.
\end{proof}

We shall localize the full quadratic form, including the potential.  The
needed estimate follows from the Robin localization in \cite[Theorem~2.3]{BerestyckiGrahamStrongKPP} together with
the potential-bearing Dirichlet statement of
\cite[Theorem~3.5]{BerestyckiGrahamStableCompact}.

\begin{theorem}[Potential-bearing Robin--Lieb localization]\label{thm:lieb}
Let $D$ be a uniformly $C^{2,\gamma}$ domain, let $V\in L^\infty(D)$ be real,
and let $\varrho\in[0,1]$.  For every $R>0$,
\begin{equation}\label{eq:lieb}
 \lambda(-\Delta+V,D,\varrho)
 \geq \inf_{z\in\R^d}
 \lambda(-\Delta+V,D\mid B_R(z),\varrho)
 -\lambda_1(B_1)R^{-2},
\end{equation}
where empty intersections are assigned eigenvalue $+\infty$.
\end{theorem}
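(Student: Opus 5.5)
The plan is the standard IMS/Lieb partition-of-unity argument, run directly on the quadratic form so that the Robin boundary term and the potential are carried along. Fix $\varrho\in(0,1]$ first; the Dirichlet endpoint is handled the same way on $H_0^1(D)$ with no boundary term. Let $\eta\in H_0^1(B_1)$ be the normalized first Dirichlet eigenfunction of the unit ball, with $\|\eta\|_{L^2(B_1)}=1$ and $\int|\nabla\eta|^2=\lambda_1(B_1)$. Set $\eta_{R,z}(x)=R^{-d/2}\eta((x-z)/R)$. Then
\[
 \int_{\R^d}\eta_{R,z}(x)^2\,dz=1 \quad\text{for every } x,
 \qquad
 \int_{\R^d}|\nabla\eta_{R,z}(x)|^2\,dz=\lambda_1(B_1)R^{-2}.
\]
This continuous partition of unity is exactly what produces the constant $\lambda_1(B_1)R^{-2}$ in \eqref{eq:lieb}.

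For $\psi\in H^1(D)$ I apply the IMS identity pointwise in $z$ and integrate over $z\in\R^d$. Since $\sum\eta^2=1$ in the integrated sense, the cross terms $2\eta\psi\nabla\eta\cdot\nabla\psi$ integrate to $\tfrac12\nabla\bigl(\int\eta_{R,z}^2\,dz\bigr)\cdot\nabla\psi^2=0$. By Fubini (all integrands are nonnegative or absolutely integrable) this gives
\[
 \int_{\R^d}\mathfrak q_{V,\varrho}[\eta_{R,z}\psi]\,dz
 =\mathfrak q_{V,\varrho}[\psi]+\lambda_1(B_1)R^{-2}\|\psi\|_{L^2(D)}^2 .
\]
The potential term and the boundary term $\beta_\varrho\int_{\partial D}\psi^2$ pass through unchanged, because each is multiplied by $\int\eta_{R,z}^2\,dz=1$. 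This is why the potential and the Robin coefficient cause no extra error.

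Next, each function $\eta_{R,z}\psi$ vanishes on the spherical part $D\cap\partial B_R(z)$ and keeps its trace on $\partial D\cap B_R(z)$. It is therefore admissible for the mixed localized form, so
\[
 \mathfrak q_{V,\varrho}[\eta_{R,z}\psi]\ge \Lambda\|\eta_{R,z}\psi\|_2^2,
 \qquad
 \Lambda:=\inf_z\lambda(-\Delta+V,D\mid B_R(z),\varrho).
\]
When $D\cap B_R(z)$ is disconnected, the function splits into its components and the same bound holds with the infimum over components, matching the convention adopted above. When the intersection is empty, $\eta_{R,z}\psi=0$ and that value of $z$ contributes nothing. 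Integrating in $z$ and using $\int\|\eta_{R,z}\psi\|_2^2\,dz=\|\psi\|_2^2$ gives
\[
 \mathfrak q_{V,\varrho}[\psi]\ge(\Lambda-\lambda_1(B_1)R^{-2})\|\psi\|_2^2 .
\]
Taking the infimum over $\psi$ yields \eqref{eq:lieb}. If $\Lambda=+\infty$, then $D$ is empty, which is excluded, so $\Lambda$ is finite.

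I expect the main obstacle to be technical rather than conceptual. One must justify that $\eta_{R,z}\psi$ lies in the mixed form domain on the possibly cornered set $D\cap B_R(z)$. One must also check that the boundary integral obeys Fubini in $z$, which needs the global trace bound available from uniform $C^{2,\gamma}$ smoothness. I would define the localized eigenvalue directly as the bottom of the form on $\{\phi\in H^1(D\cap B_R(z)):\phi=0\text{ on }D\cap\partial B_R(z)\}$, with the boundary term taken only over $\partial D\cap B_R(z)$. With that definition admissibility is immediate, and it reproduces the combination of \cite[Theorem~2.3]{BerestyckiGrahamStrongKPP} and \cite[Theorem~3.5]{BerestyckiGrahamStableCompact}.
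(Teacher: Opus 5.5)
Your proposal is correct and takes essentially the same approach as the paper: Lieb's continuous partition of unity built from translates of a ball bump. In both, the cross term integrates to zero in $z$ and the potential and Robin terms localize exactly. The difference is cosmetic: you use the exact first eigenfunction and integrate the localized lower bound over every $z$ for an arbitrary $\psi$, whereas the paper takes a near-minimizer and an almost-optimal $C_c^\infty$ bump, then averages to select a single good $z$.
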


\begin{proof}
For $\varrho>0$ write $\beta=\beta_\varrho$.  Fix $\varepsilon>0$ and take
an $L^2(D)$-normalized form function $a$ whose quotient is below
$\lambda(-\Delta+V,D,\varrho)+\varepsilon/2$.  We also take
$b\in C_c^\infty(B_R)$ with $\|b\|_2=1$ and
$\int|\nabla b|^2<\lambda_1(B_R)+\varepsilon/2$.  At the Dirichlet endpoint
$a$ has zero trace and the boundary terms below are omitted.

Set $h_z(x)=a(x)b(x-z)$ and let $A(z)=\|h_z\|_2^2$ and $T(z)$ be its mixed
form energy on $D\mid B_R(z)$.  Fubini gives $\int_{\R^d}A(z)\,dz=1$.
The potential and Robin terms localize exactly.  For the gradient term,
\[
 |\nabla(ab_z)|^2
 =|\nabla a|^2b_z^2+a^2|\nabla b_z|^2
   +\tfrac12\nabla(a^2)\cdot\nabla(b_z^2),
\]
and the last term has zero integral in $z$, since
$\nabla(b_z^2)=-\nabla_z(b(x-z)^2)$ and $b$ is compactly supported.  Thus
\[
 \int_{\R^d}T(z)\,dz
 <\lambda(-\Delta+V,D,\varrho)+\lambda_1(B_R)+\varepsilon.
\]
Since $\int_{\R^d}A(z)\,dz=1$, some $z$ with $A(z)>0$ satisfies
$T(z)/A(z)$ below the right-hand side.  The mixed Rayleigh principle, followed
by $\varepsilon\downarrow0$ and
$\lambda_1(B_R)=\lambda_1(B_1)R^{-2}$, gives \eqref{eq:lieb}.
\end{proof}

\section{Half-space layers and their spectral gap}\label{sec:profiles}

At the boundary blow-up scale the limiting profile is one-dimensional.  For
each fixed parameter, both the ODE profile and the corresponding half-space
classification are already known
\cite[Lemma~6.1 and Theorem~1.1(A)]{BerestyckiGrahamHalfSpace}.  Our use of
that result is slightly different: the parameter is allowed to move all the
way from Dirichlet to Neumann, so both the profile and its linearization have
to be controlled along the whole interval.  Let
\begin{equation}\label{eq:P}
 P(s):=\int_s^1 f(r)\,dr,
 \qquad 0\leq s\leq1.
\end{equation}
Then $P(s)>0$ for $s<1$.  For $\varrho\in[0,1)$, consider
\begin{equation}\label{eq:halfline-profile}
 \begin{cases}
  -\phi_\varrho''=f(\phi_\varrho),&y>0,\\
  -\varrho\phi_\varrho'(0)+(1-\varrho)\phi_\varrho(0)=0,\\
  \phi_\varrho(+\infty)=1.
 \end{cases}
\end{equation}
The outward normal at $y=0$ is $-\partial_y$.  We set
$\phi_1\equiv1$.

\begin{proposition}[Translation formula]\label{prop:translation}
For every $\varrho\in[0,1)$, problem
\eqref{eq:halfline-profile} has a unique solution satisfying
$0\leq\phi_\varrho<1$ and $\phi_\varrho'>0$.  If $\phi_0$ is the Dirichlet
profile, there is a unique $c_\varrho\in[0,\infty)$ such that
\begin{equation}\label{eq:translation}
 \phi_\varrho(y)=\phi_0(y+c_\varrho).
\end{equation}
The map $\varrho\mapsto\phi_\varrho$ is continuous from
$[0,1]$ into $C_b([0,\infty))$, and
\begin{equation}\label{eq:uniform-tail}
 \phi_0(y)\leq\phi_\varrho(y)\leq1,
 \qquad
 \sup_{\varrho\in[0,1]}\bigl(1-\phi_\varrho(y)\bigr)
 =1-\phi_0(y)\longrightarrow0.
\end{equation}
\end{proposition}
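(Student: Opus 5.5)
The argument is a phase-plane (energy) analysis of the autonomous ODE. First, every solution of \eqref{eq:halfline-profile} with $\phi_\varrho(+\infty)=1$ and $0\le\phi_\varrho<1$ obeys the first integral
\[
 \tfrac12(\phi_\varrho')^2=P(\phi_\varrho).
\]
To justify it, multiply the equation by $\phi'$ and integrate from $y$ to $\infty$. This needs $\phi'(\infty)=0$, which follows from boundedness and the equation: $\phi''$ is bounded, so $\phi'$ is uniformly continuous, and since $\phi$ converges, $\phi'\to0$. Because $P(s)>0$ for $s<1$, $\phi'$ never vanishes while $\phi<1$. Since $\phi$ must rise to $1$, we get $\phi'=\sqrt{2P(\phi)}>0$.

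\textbf{Dirichlet profile.} The profile $\phi_0$ is the solution of $\phi'=\sqrt{2P(\phi)}$ with $\phi(0)=0$. It reaches $1$ only as $y\to\infty$: near $1$ we have $P(s)\sim\tfrac12|f'(1)|(1-s)^2$, so $\int^1 ds/\sqrt{2P(s)}=\infty$. Near $0$ the integral converges, since $P(0)>0$. Hence
\[
 \phi_0^{-1}(s)=\int_0^s\frac{dr}{\sqrt{2P(r)}}
\]
is a bijection from $[0,1)$ onto $[0,\infty)$. Every solution of the first-order ODE with values in $[0,1)$ is a translate of $\phi_0$. More precisely, $\phi(y)=\phi_0(y+c)$ with $c=\phi_0^{-1}(\phi(0))\ge0$. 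This holds because $\sqrt{2P}$ is Lipschitz away from $s=1$ (and $P>0$ there), so uniqueness applies.

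\textbf{Boundary condition.} With the outward normal $-\partial_y$, the boundary condition reads $\varrho\phi'(0)=(1-\varrho)\phi(0)$. Set $s=\phi_0(c)\in[0,1)$. Then $c$ must solve
\[
 G(s):=\varrho\sqrt{2P(s)}-(1-\varrho)s=0 .
\]
The function $G$ is strictly decreasing on $[0,1]$, because $P$ is decreasing there ($P'=-f\le0$). Moreover $G(0)=\varrho\sqrt{2P(0)}\ge0$ and $G(1)=-(1-\varrho)<0$. So there is a unique root $s_\varrho\in[0,1)$, with $s_0=0$. This gives existence and uniqueness of $\phi_\varrho$, and $c_\varrho=\phi_0^{-1}(s_\varrho)$.

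\textbf{Continuity and the tail bound.} The root $s_\varrho$ depends continuously on $\varrho$ by strict monotonicity of $G$ (implicit-function style, or by elementary monotone-root continuity). As $\varrho\uparrow1$ we have $s_\varrho\to1$, so $c_\varrho\to\infty$. Since $\phi_0$ is uniformly continuous and increasing, continuity of $c_\varrho$ on $[0,1)$ gives continuity of $\varrho\mapsto\phi_0(\cdot+c_\varrho)$ into $C_b$. At $\varrho=1$ the key estimate is
\[
 \sup_y\bigl|1-\phi_0(y+c_\varrho)\bigr|=1-\phi_0(c_\varrho)\to0,
\]
which is continuity at the Neumann endpoint. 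Monotonicity of $\phi_0$ gives $\phi_0\le\phi_\varrho\le1$, and the supremum is attained at $\varrho=0$.

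The only step needing care is the endpoint $\varrho\to1$, i.e.\ that $c_\varrho\to\infty$ forces uniform convergence to $1$. This is handled by the monotone translate structure.
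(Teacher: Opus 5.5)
Your proposal is correct and follows the paper's proof: the quadrature for $\phi_0$, the first integral $\phi'=\sqrt{2P(\phi)}$, reduction of the boundary condition to a scalar trace equation, and the translate structure, which gives both continuity and the Neumann endpoint. The differences are cosmetic. You write the trace equation as the root of the decreasing function $\varrho\sqrt{2P(s)}-(1-\varrho)s$, which also covers $\varrho=0$, where the paper uses $s/\sqrt{2P(s)}=\alpha$. You also justify $\phi'\to0$ at infinity explicitly via bounded $\phi''$ and Barbalat's lemma, a step the paper leaves implicit.
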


\begin{proof}
For the Dirichlet profile the quadrature is explicit.  Define
\[
 Y(s):=\int_0^s\frac{dr}{\sqrt{2P(r)}}.
\]
Since $f'(1)<0$, one has
$P(s)\sim |f'(1)|(1-s)^2/2$ as $s\nearrow1$, so $Y$ is strictly
increasing from $[0,1)$ onto $[0,\infty)$.  Its inverse $\phi_0$ satisfies
\begin{equation}\label{eq:energy}
 \phi_0'(y)=\sqrt{2P(\phi_0(y))},
 \qquad -\phi_0''=f(\phi_0),
\end{equation}
as well as $\phi_0(0)=0$, $\phi_0'>0$, and $\phi_0(y)\to1$.

For $\varrho\in(0,1)$ put $\alpha=\varrho/(1-\varrho)$ and consider
\[
 H(s):=\frac{s}{\sqrt{2P(s)}}.
\]
It is strictly increasing on $[0,1)$, since
\[
 H'(s)=\frac1{\sqrt{2P(s)}}
 +\frac{s f(s)}{(2P(s))^{3/2}}>0.
\]
Since $H(0)=0$ and $H(s)\to\infty$ as $s\nearrow1$, there is a unique
$s_\varrho\in(0,1)$ such that
\begin{equation}\label{eq:s-alpha}
 H(s_\varrho)=\alpha.
\end{equation}
It depends continuously and increasingly on $\alpha$, and
$s_\varrho\to1$ as $\varrho\nearrow1$.  Set
\[
 c_\varrho:=Y(s_\varrho),
 \qquad \phi_\varrho(y):=\phi_0(y+c_\varrho).
\]
The identities \eqref{eq:energy} and \eqref{eq:s-alpha} give
$\alpha\phi_\varrho'(0)=s_\varrho=\phi_\varrho(0)$, exactly the boundary
condition in \eqref{eq:halfline-profile}.  At $\varrho=0$ we take $c_0=0$.

For uniqueness, take another profile in the stated class.  Multiplying the ODE
by its positive derivative and using the limit at infinity gives the same first
integral,
\[
 \phi_\varrho'(y)=\sqrt{2P(\phi_\varrho(y))}.
\]
For $0<\varrho<1$, the boundary trace must solve
\eqref{eq:s-alpha}; for $\varrho=0$, the trace is $s_0=0$.  In either case,
separation of variables gives
$Y(\phi_\varrho(y))=y+Y(s_\varrho)$.  Uniqueness and \eqref{eq:translation} follow.  Continuity for $\varrho<1$ follows from continuity
of $c_\varrho$ and uniform continuity of $\phi_0$.  As
$\varrho\nearrow1$, monotonicity gives
\[
 \sup_{y\geq0}|1-\phi_\varrho(y)|=1-s_\varrho\longrightarrow0.
\]
It also gives continuity at the Neumann endpoint and the uniform tail
\eqref{eq:uniform-tail}.  The half-space uniqueness result corresponding
to this ODE construction is proved in
\cite[Theorem 1.1(A)]{BerestyckiGrahamHalfSpace}.  All limiting solutions to
which we apply that theorem take values in $[0,1]$, so the behavior of the
present reaction outside that interval plays no role in this citation.
\end{proof}

\begin{lemma}[Half-line classification]\label{lem:halfline-classification}
Let $\varrho\in[0,1)$, and let
$u\in C^2(\R_+)\cap C^1([0,\infty))$ be a bounded, nonzero, nonnegative
solution of
\[
 -u''=f(u),\qquad
 -\varrho u'(0)+(1-\varrho)u(0)=0,
 \qquad 0\leq u\leq1.
\]
Then $u=\phi_\varrho$.
\end{lemma}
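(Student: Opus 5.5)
The plan is a phase-plane argument built on the first integral. Put $E(y)=\tfrac12u'(y)^2+F(u(y))$ with $F(s)=\int_0^sf$. Then $E$ is constant, since $E'=u'(u''+f(u))=0$. The first task is to show $u'(y)\to0$ and $u(y)\to\ell$ as $y\to\infty$, with $\ell\in\{0,1\}$.

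Boundedness of $u$ and $f(u)$ bounds $u''$, and hence $u'$, by interpolation. Constancy of $E$ together with boundedness shows that $u'$ has a limit along the orbit: the orbit in the $(u,u')$ plane lies on the level set $\{\tfrac12p^2+F(s)=E\}$ inside the strip $0\le s\le1$. The only equilibria there are $(0,0)$ and $(1,0)$. On $[0,1]$ one has $f\ge0$, so $u''\le0$ and $u$ is concave. A bounded concave function on $\R_+$ is nondecreasing with $u'\downarrow0$, so $u\to\ell$ and $f(\ell)=0$ because $u''\to -f(\ell)$ must vanish. Concavity gives this directly, and I will use it rather than the phase plane.

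Next I would show $\ell=1$. If $\ell=0$, then $u$ is nondecreasing, $0\le u\le\ell=0$, and $u\equiv0$, which is excluded. So $u\nearrow1$ and $u'\ge0$. Letting $y\to\infty$ in $E$ gives $E=F(1)$, that is,
\[
 u'=\sqrt{2P(u)}.
\]
To get strict positivity $u'>0$: if $u(y_0)=1$ at some finite $y_0$, then $u'(y_0)=0$, and ODE uniqueness from the data $(1,0)$, using the $C^1$ Lipschitz property of $f$, forces $u\equiv1$. This contradicts the boundary condition when $\varrho<1$, since $(1-\varrho)\cdot1\neq0$. Hence $u<1$ everywhere and $u'=\sqrt{2P(u)}>0$.

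The boundary condition then pins down the trace. At $\varrho=0$ it gives $u(0)=0$. For $\varrho\in(0,1)$ it reads $\alpha\sqrt{2P(u(0))}=u(0)$, i.e.\ $H(u(0))=\alpha$, so $u(0)=s_\varrho$ by the strict monotonicity of $H$ in Proposition~\ref{prop:translation}. Separating variables gives $Y(u(y))=y+Y(s_\varrho)$, and therefore $u=\phi_\varrho$ by the uniqueness part of Proposition~\ref{prop:translation}.

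The main subtle point is the dichotomy $\ell\in\{0,1\}$ and excluding $u\equiv1$. Concavity disposes of both cleanly, so the remaining care is only the ODE-uniqueness step at the equilibrium $1$.
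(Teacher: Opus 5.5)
Your proof is correct and follows essentially the paper's route: concavity gives monotonicity, the limit $\ell$ must satisfy $f(\ell)=0$ and hence equals $1$, ODE uniqueness at $(1,0)$ keeps $u<1$, and the trace equation plus the quadrature uniqueness of Proposition~\ref{prop:translation} finish. The only reordering is that you get $u'\ge0$ directly from global concavity and boundedness below, and exclude $u\equiv0$ via $\ell=0$; the paper instead first gets $u'(0)>0$ from initial-data uniqueness and then shows $u'$ cannot vanish.
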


\begin{proof}
If $\varrho=0$, then $u(0)=0$ and ODE uniqueness gives $u'(0)>0$;
if $0<\varrho<1$, then
$u'(0)=\beta_\varrho u(0)>0$, since otherwise the initial data would force
$u\equiv0$.  The bound $u\leq1$ and uniqueness for the ODE prevent $u$
from reaching $1$ at a finite point: at a first such point a positive derivative
would cross above $1$, while a zero derivative would force the constant solution
$u\equiv1$.  So $0<u<1$ on $(0,\infty)$.  On this interval
$u''=-f(u)<0$, so $u'$ is strictly decreasing.  It cannot vanish: after a first
zero it would become negative and, being decreasing while $u$ remains in
$(0,1)$, would force $u$ eventually to become negative.  So $u'$ stays positive on
$\R_+$.  The bounded increasing function $u$ has a limit $\ell\in(0,1]$, and $u'\downarrow0$.  If $f(\ell)>0$, then
$u''=-f(u)$ is bounded above by a negative constant for all sufficiently large
y, contradicting $u'\geq0$.  We must have $f(\ell)=0$, and positivity of $f$ on
$(0,1)$ gives $\ell=1$.  The uniqueness part of Proposition~\ref{prop:translation} now gives
$u=\phi_\varrho$.
\end{proof}

\begin{lemma}[Continuity of the half-line Robin form]
\label{lem:halfline-form-continuity}
Let $V\in L^\infty(\R_+)$ be real.  For $\beta\in[0,\infty)$ set
\begin{equation}\label{eq:halfline-form}
 \nu_\beta(V):=
 \inf_{h\in H^1(\R_+)\setminus\{0\}}
 \frac{\displaystyle
  \int_0^\infty\bigl(|h'|^2+Vh^2\bigr)\,dy
  +\beta |h(0)|^2}
 {\displaystyle\int_0^\infty h^2\,dy},
\end{equation}
and let $\nu_\infty(V)$ be the same infimum over
$H_0^1(\R_+)\setminus\{0\}$, without the boundary term.  Then
$\beta\mapsto\nu_\beta(V)$ extends continuously to the compactified
interval $[0,\infty]$.  More quantitatively, there is $C_V>0$ such that
\begin{equation}\label{eq:quantitative-robin-dirichlet}
 0\leq \nu_\infty(V)-\nu_\beta(V)\leq C_V\beta^{-1/2}
 \qquad(\beta\geq1).
\end{equation}
\end{lemma}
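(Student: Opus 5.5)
Monotonicity comes first.  Since $H_0^1(\R_+)\subset H^1(\R_+)$ and the boundary term vanishes on $H_0^1$, one has $\nu_\beta(V)\le\nu_\infty(V)$ for every finite $\beta$.  The map $\beta\mapsto\nu_\beta(V)$ is nondecreasing, because the numerator in \eqref{eq:halfline-form} is nondecreasing in $\beta$ for each fixed $h$.  It is concave, being an infimum of affine functions of $\beta$, and hence continuous on $[0,\infty)$.  We also have $\nu_\beta(V)\ge-\|V\|_\infty$.  It therefore remains to prove continuity at $\beta=\infty$, and this follows from the quantitative bound \eqref{eq:quantitative-robin-dirichlet}.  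I would prove that bound directly and use it both for the upper estimate and for the endpoint continuity.

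For the upper estimate, fix $\beta\ge1$ and take a test function $h\in H^1(\R_+)$ with $\|h\|_2=1$ whose quotient is below $\nu_\beta(V)+\beta^{-1/2}$.  The Robin term gives $\beta|h(0)|^2\le \nu_\beta+\beta^{-1/2}+\|V\|_\infty\le C_0$, and the gradient gives $\|h'\|_2^2\le C_0+\|V\|_\infty$.  Here $C_0$ depends only on $\|V\|_\infty$, since $\nu_\beta\le\nu_\infty\le C$.  The last bound on $\nu_\infty$ is obtained by testing $\nu_\infty$ with a fixed $H_0^1$ function, which gives $\nu_\infty\le\pi^2/4+\|V\|_\infty$ or similar.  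Hence $|h(0)|\le C\beta^{-1/2}$.

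The key step is to modify $h$ so that it lies in $H_0^1(\R_+)$, at cost $O(\beta^{-1/2})$ in the Rayleigh quotient.  The natural choice is a linear correction near the origin: set $\tilde h(y)=h(y)-h(0)\theta(y)$ with $\theta(y)=(1-y)_+$.  Then $\tilde h(0)=0$ and $\|\tilde h-h\|_{H^1}\le C|h(0)|\le C\beta^{-1/2}$.  Expanding the numerator gives cross terms $2\int h'\cdot h(0)\theta'$ and $2\int Vh\,h(0)\theta$, both bounded by $C\|h\|_{H^1}|h(0)|\le C\beta^{-1/2}$, plus quadratic terms of order $\beta^{-1}$.  The denominator satisfies $\|\tilde h\|_2^2\ge1-C\beta^{-1/2}$.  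Dropping the nonnegative boundary term $\beta|h(0)|^2$, we obtain
\[
 \nu_\infty(V)\le\frac{\int(|\tilde h'|^2+V\tilde h^2)}{\|\tilde h\|_2^2}
 \le \frac{\nu_\beta(V)+C\beta^{-1/2}}{1-C\beta^{-1/2}}
 \le\nu_\beta(V)+C_V\beta^{-1/2}
\]
for $\beta$ large; the last step uses $|\nu_\beta|\le C$.  For $1\le\beta\le\beta_*$ the estimate holds trivially after enlarging $C_V$, since both quantities are bounded.

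I expect the main obstacle to be only bookkeeping.  One must make sure that every constant depends solely on $\|V\|_\infty$, and that $\nu_\beta$ is bounded uniformly in $\beta$ on both sides.  The minimizer need not exist for a general $L^\infty$ potential on the half-line, and this is why I work with near-minimizers rather than eigenfunctions.  Endpoint continuity then follows: $\nu_\beta\to\nu_\infty$ as $\beta\to\infty$.
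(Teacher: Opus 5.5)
Your proof is correct. At the Dirichlet endpoint it follows the paper's argument. A near-minimizer $h$ has $\beta|h(0)|^2$ and $\|h'\|_2$ bounded, and subtracting $h(0)$ times a fixed profile puts it in $H_0^1(\R_+)$ at $H^1$-cost $O(\beta^{-1/2})$. The paper uses $e^{-y}$ and a $\beta^{-1}$-near-minimizer where you use $(1-y)_+$ and a $\beta^{-1/2}$-near-minimizer, which changes nothing.

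The real difference is continuity for finite $\beta$. The paper proves a local Lipschitz bound $|\nu_{\beta_1}(V)-\nu_{\beta_2}(V)|\le C_{B,V}|\beta_1-\beta_2|$ on $[0,B]$. It inserts an $\varepsilon$-minimizer for one parameter into the quotient for the other and controls $|h(0)|^2\le 2\|h\|_2\|h'\|_2$ uniformly. You instead observe that $\nu_\beta(V)$ is an infimum of nondecreasing affine functions of $\beta$. Your route is shorter and gives monotonicity for free; the paper's gives an explicit modulus of continuity down to $\beta=0$.

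Two small repairs are needed. First, concavity alone gives continuity only on $(0,\infty)$: a concave nondecreasing function may jump up immediately after $0$. At $\beta=0$ you should add that an infimum of continuous functions is upper semicontinuous. For each fixed $h$ the quotient tends to its $\beta=0$ value as $\beta\downarrow0$, and together with $\nu_\beta(V)\ge\nu_0(V)$ this gives the limit. Second, in the final quotient comparison you need the two-sided bound $\bigl|\|\tilde h\|_2^2-1\bigr|\le C\beta^{-1/2}$, which your $H^1$ estimate supplies. The numerator may be negative, and then the lower bound on the denominator alone points the wrong way, so your middle displayed expression is not valid as written. The final bound $\nu_\infty(V)\le\nu_\beta(V)+C_V\beta^{-1/2}$ still follows from the two-sided estimate and the boundedness of the numerator.
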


\begin{proof}
Start with $\beta$ in a bounded interval $[0,B]$.  A single $H^1$ test function gives a uniform upper bound for
$\nu_\beta(V)$ on $[0,B]$, while
$\nu_\beta(V)\geq-\|V_-\|_\infty$.  For every
$\varepsilon\in(0,1]$, each $L^2$-normalized $\varepsilon$-minimizer
$h$ for a parameter $\beta\in[0,B]$ satisfies
\[
 \|h'\|_2^2\leq C_{B,V}.
\]
The one-dimensional trace estimate
\[
 |h(0)|^2\leq 2\|h\|_2\|h'\|_2
\]
gives $|h(0)|^2\leq C_{B,V}$.  If $h_2$ is such an
$\varepsilon$-minimizer for $\beta_2$, then using the same function in the
$\beta_1$ quotient gives
\[
 \nu_{\beta_1}(V)
 \leq \nu_{\beta_2}(V)+\varepsilon
       +(\beta_1-\beta_2)|h_2(0)|^2.
\]
Interchanging $\beta_1$ and $\beta_2$ and then letting
$\varepsilon\downarrow0$ gives, on $[0,B]$,
\[
 |\nu_{\beta_1}(V)-\nu_{\beta_2}(V)|
 \leq C_{B,V}|\beta_1-\beta_2|.
\]
Thus $\nu_\beta(V)$ is continuous for finite $\beta$.  At the Dirichlet
endpoint the Robin penalty diverges, so we use a different comparison.  Since $H_0^1(\R_+)\subset H^1(\R_+)$ and
its functions have zero trace,
\begin{equation}\label{eq:robin-below-dirichlet}
 \nu_\beta(V)\leq\nu_\infty(V).
\end{equation}
Fix $\beta\geq1$ and choose $h\in H^1(\R_+)$ with
$\|h\|_2=1$ and
\[
 \int_0^\infty\bigl(|h'|^2+Vh^2\bigr)\,dy
 +\beta|h(0)|^2
 \leq \nu_\beta(V)+\beta^{-1}.
\]
By \eqref{eq:robin-below-dirichlet}, the right-hand side is uniformly
bounded for $\beta\geq1$.  Both $\|h'\|_2$ and $\beta|h(0)|^2$ are uniformly bounded.  Writing
$a=h(0)$, we have $|a|\leq C\beta^{-1/2}$.  Define
\[
 g(y):=h(y)-ae^{-y}.
\]
Then $g\in H_0^1(\R_+)$ and
$\|g-h\|_{H^1}\leq C\beta^{-1/2}$.  The uniform bound on $\|h'\|_2$,
together with $V\in L^\infty$, gives
\[
 \bigl|\|g\|_2^2-1\bigr|\leq C\beta^{-1/2},
 \qquad
 \left|\int_0^\infty\bigl(|g'|^2+Vg^2\bigr)
 -\int_0^\infty\bigl(|h'|^2+Vh^2\bigr)\right|
 \leq C\beta^{-1/2}.
\]
For all sufficiently large $\beta$, the Dirichlet Rayleigh principle and
the preceding estimates yield
\[
 \nu_\infty(V)\leq \nu_\beta(V)+C_V\beta^{-1/2}.
\]
After enlarging $C_V$, the same estimate covers the remaining bounded range
of $\beta\geq1$.  Together with \eqref{eq:robin-below-dirichlet}, this is
\eqref{eq:quantitative-robin-dirichlet}; in particular the form bottom is
continuous at the Dirichlet endpoint.
\end{proof}

For $\varrho\in[0,1]$, let
\begin{equation}\label{eq:mu-rho}
 \mu_\varrho:=
 \lambda\bigl(-\partial_y^2-f'(\phi_\varrho),\R_+,\varrho\bigr).
\end{equation}

\begin{proposition}[Uniform half-line stability]\label{prop:uniform-gap}
The map $\varrho\mapsto\mu_\varrho$ is continuous on $[0,1]$ and
\begin{equation}\label{eq:uniform-gap}
 \mu_*:=\min_{\varrho\in[0,1]}\mu_\varrho>0.
\end{equation}
Lemma~\ref{lem:product} transfers the same lower bound to the
linearized operators on $\Hh^d$.
\end{proposition}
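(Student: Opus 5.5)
The proof has two parts: continuity of $\varrho\mapsto\mu_\varrho$ on $[0,1]$, and pointwise positivity $\mu_\varrho>0$ for each $\varrho$. Compactness of $[0,1]$ then gives $\mu_*>0$, and Lemma~\ref{lem:product} with $W=-f'(\phi_\varrho)$ transfers the bound to $\Hh^d$.

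\emph{Continuity.} Write $V_\varrho=-f'(\phi_\varrho)$. By Proposition~\ref{prop:translation}, $\varrho\mapsto\phi_\varrho$ is continuous into $C_b([0,\infty))$. Since $f'$ is uniformly continuous on $[0,1]$, the map $\varrho\mapsto V_\varrho$ is continuous into $L^\infty(\R_+)$. The perturbation estimate \eqref{eq:potential-lipschitz} gives
\[
 |\mu_\varrho-\mu_{\varrho'}|\le\|V_\varrho-V_{\varrho'}\|_\infty
 +\bigl|\nu_{\beta_\varrho}(V_{\varrho'})-\nu_{\beta_{\varrho'}}(V_{\varrho'})\bigr|.
\]
The second term needs continuity of $\beta\mapsto\nu_\beta(V)$ uniformly in $V$ over the compact family $\{V_\varrho\}$. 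Lemma~\ref{lem:halfline-form-continuity} supplies this. Its constants $C_{B,V}$ and $C_V$ depend only on $\|V\|_\infty$, which is bounded by $\max_{[0,1]}|f'|$. This holds on $[0,\infty]$, including the Dirichlet endpoint $\beta=\infty$ via \eqref{eq:quantitative-robin-dirichlet}.

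\emph{Positivity at the Neumann endpoint.} At $\varrho=1$ we have $\phi_1\equiv1$ and $\beta=0$. The form is $\int|h'|^2-f'(1)\int h^2$, so $\mu_1=-f'(1)>0$.

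\emph{Positivity for $\varrho\in[0,1)$.} This is the main step. The essential spectrum of the half-line operator starts at $\lim_{y\to\infty}V_\varrho=-f'(1)>0$, by Weyl's theorem, since $V_\varrho+f'(1)\to0$. So either $\mu_\varrho\ge -f'(1)>0$, or $\mu_\varrho$ is an isolated eigenvalue with a positive eigenfunction $\psi$ that decays exponentially. In the second case I compare $\psi$ with $\phi_\varrho'$. Differentiating the ODE gives $(-\partial_y^2-f'(\phi_\varrho))\phi_\varrho'=0$, and $\phi_\varrho'>0$ decays exponentially, since $\phi_0'=\sqrt{2P(\phi_0)}\sim|f'(1)|^{1/2}(1-\phi_0)$.

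Integrating $\psi\cdot(\text{eq for }\phi_\varrho')-\phi_\varrho'\cdot(\text{eq for }\psi)$ over $\R_+$, the boundary terms at infinity vanish. This gives
\[
 \mu_\varrho\int_0^\infty\psi\phi_\varrho' =\bigl[\psi'\phi_\varrho'-\psi\phi_\varrho''\bigr]_{y=0}
\]
up to sign. The eigenfunction satisfies $\psi'(0)=\beta\psi(0)$ (Robin) or $\psi(0)=0$ (Dirichlet). Also $\phi_\varrho''(0)=-f(\phi_\varrho(0))$, and $\phi_\varrho'(0)>0$.

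In the Dirichlet case the right side is $\psi'(0)\phi_0'(0)>0$, with $\psi'(0)>0$ by Hopf. For $0<\varrho<1$ the right side is
\[
 \psi(0)\bigl(\beta\phi_\varrho'(0)+f(s_\varrho)\bigr)=\psi(0)\bigl(\phi_\varrho(0)\beta^2+f(s_\varrho)\bigr)>0,
\]
since $\phi_\varrho'(0)=\beta\phi_\varrho(0)$ and $f(s_\varrho)>0$. The left integral is positive, so $\mu_\varrho>0$.

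The expected obstacle is getting the sign conventions right in this Wronskian boundary term, given the outward normal $-\partial_y$. Once the sign is checked, compactness of $[0,1]$ together with continuity yields \eqref{eq:uniform-gap}.
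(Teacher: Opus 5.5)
Your proposal is correct and follows essentially the same route as the paper: Weyl's theorem plus the Wronskian identity against the zero mode $\phi_\varrho'$ give pointwise positivity, and the sign you displayed is the right one, since $(\psi\phi_\varrho''-\phi_\varrho'\psi')'=\mu_\varrho\psi\phi_\varrho'$; you should add that $\psi(0)>0$ in the Robin case follows from ODE uniqueness. Continuity then comes from Proposition~\ref{prop:translation}, \eqref{eq:potential-lipschitz} and Lemma~\ref{lem:halfline-form-continuity}, with compactness of $[0,1]$ finishing the argument; the only difference is cosmetic: you put the varying potential $V_{\varrho'}$ into the $\beta$-continuity term, which forces you to check (correctly) that the constants in Lemma~\ref{lem:halfline-form-continuity} depend only on $\|V\|_\infty$, whereas the paper applies that lemma to the fixed limit potential $V_\varrho$ and so needs no uniformity in $V$.
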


\begin{proof}
Take $\varrho<1$ and write
\[
 L_\varrho:=-\partial_y^2+V_\varrho,
 \qquad V_\varrho(y):=-f'(\phi_\varrho(y)),
 \qquad c_\infty:=-f'(1)>0.
\]
On the half-line, the form bottom in \eqref{eq:mu-rho} is the bottom of the
self-adjoint spectrum of $L_\varrho$ with the indicated boundary condition.  Since
$V_\varrho-c_\infty\to0$ at infinity, multiplication by
$V_\varrho-c_\infty$ is a relatively compact perturbation of the constant
coefficient half-line operator with the same boundary condition.  Weyl's theorem gives
\[
 \sigma_{\rm ess}(L_\varrho)=[c_\infty,\infty).
\]
Test functions translated to infinity show $\mu_\varrho\leq c_\infty$.
If equality holds, there is nothing more to prove.  If $\mu_\varrho<c_\infty$,
one-dimensional Sturm--Liouville theory shows that $\mu_\varrho$ is a
simple isolated eigenvalue with an $L^2$ eigenfunction $\psi$ which is
strictly positive in $(0,\infty)$.

Differentiating the profile equation yields
\begin{equation}\label{eq:zero-mode}
 \bigl(-\partial_y^2-f'(\phi_\varrho)\bigr)\phi_\varrho'=0.
\end{equation}
The spectral separation from $c_\infty$ gives exponential decay of
$\psi$ and $\psi'$.  The energy identity and
$P(s)\sim c_\infty(1-s)^2/2$ give exponential decay of
$\phi_\varrho'$ and $\phi_\varrho''$.  The Wronskian vanishes at infinity,
so integration gives
\begin{equation}\label{eq:wronskian}
 \mu_\varrho\int_0^\infty\psi\phi_\varrho'
 =\psi'(0)\phi_\varrho'(0)-\psi(0)\phi_\varrho''(0).
\end{equation}
The integral on the left is strictly positive.  At $\varrho=0$, one has
$\psi(0)=0$ and $\psi'(0)>0$, so the right-hand side equals
$\psi'(0)\phi_0'(0)>0$.  If
$0<\varrho<1$, then
\[
 \psi'(0)=\beta_\varrho\psi(0),
 \qquad \phi_\varrho''(0)=-f(s_\varrho),
\]
and $\psi(0)>0$ by uniqueness for the ODE initial-value problem.  The right-hand side of \eqref{eq:wronskian} is
\[
 \psi(0)\bigl(\beta_\varrho\phi_\varrho'(0)+f(s_\varrho)\bigr)>0.
\]
$\mu_\varrho>0$ for every $\varrho<1$.  At the Neumann endpoint
$\phi_1\equiv1$, and
\[
 \mu_1=\lambda(-\partial_y^2+|f'(1)|,\R_+,1)=|f'(1)|>0.
\]

To make the lower bound uniform in $\varrho$, use
Proposition~\ref{prop:translation} and the uniform continuity of $f'$:
$V_\varrho$ depends continuously on $\varrho$ in $L^\infty(\R_+)$.  For
$\varrho>0$, the boundary condition is the Robin form condition with
$\beta_\varrho=(1-\varrho)/\varrho$, while $\varrho=0$ is the
$\beta=\infty$ Dirichlet endpoint.  Lemma
\ref{lem:halfline-form-continuity}, together with
\eqref{eq:potential-lipschitz}, gives
\[
 \mu_{\varrho_j}\longrightarrow\mu_\varrho
 \qquad\text{whenever }\varrho_j\to\varrho\in[0,1].
\]
Hence $\varrho\mapsto\mu_\varrho$ is continuous and positive on the compact
interval $[0,1]$.  Its minimum is positive, which gives
\eqref{eq:uniform-gap}.
\end{proof}

\begin{proposition}[Joint half-line compactness for reaction families]
\label{prop:family-halfline}
Let $\mathfrak F$ be compactly positive.  Then
\begin{equation}\label{eq:family-profile-continuity}
 (f,\varrho)\longmapsto\phi^f_\varrho
 \quad\text{is continuous from }\mathfrak F\times[0,1]
 \text{ into }C_b([0,\infty)),
\end{equation}
and $(f,\varrho)\mapsto\mu^f_\varrho$ is continuous.  On the compact
parameter set this gives
\begin{equation}\label{eq:family-uniform-gap}
 \widehat\mu:=\min_{f\in\mathfrak F}\min_{\varrho\in[0,1]}
 \mu^f_\varrho>0.
\end{equation}
The profile tails can be chosen uniformly in $(f,\varrho)$.
\end{proposition}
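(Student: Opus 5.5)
The plan is to redo the constructions of Proposition~\ref{prop:translation} and Proposition~\ref{prop:uniform-gap} with the reaction $f$ as an extra parameter. First I will show that every quantity in the quadrature depends continuously on $f$. Then I will pass the pointwise positivity of $\mu^f_\varrho$ to a minimum over the compact set $\mathfrak F\times[0,1]$.

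The first step is profile continuity and uniform tails. Write $P_f(s)=\int_s^1 f$, $Y_f$ and $H_f$ as before. Compactness in $C^1([0,1])$ gives uniform positive bounds $\min_{\mathfrak F} f'(0)>0$ and $\min_{\mathfrak F}|f'(1)|>0$. Combined with the uniform H\"older bound on $f'$, these give
\[
 c_1(1-s)^2\le P_f(s)\le c_2(1-s)^2
\]
near $s=1$ and $c_1 s\le f(s)$ near $s=0$, both with constants uniform over $\mathfrak F$. On compact subsets of $(0,1)$, positivity of $f$ is uniform by compactness and continuity of $(f,s)\mapsto f(s)$. From these bounds:
\begin{itemize}
 \item $Y_f(s)\to\infty$ as $s\nearrow1$ uniformly in $f$, at a logarithmic rate with uniform constants.
 \item $Y_f$ depends continuously on $f$, locally uniformly on $[0,1)$, by dominated convergence.
 \item $\phi^f_0=Y_f^{-1}$ is continuous in $f$ locally uniformly on $[0,\infty)$.
\end{itemize}
The uniform lower bound $Y_f(s)\le C|\log(1-s)|+C$ gives a uniform tail: for every $\eta>0$ there is $y_\eta$ with $1-\phi^f_0(y)\le\eta$ for all $y\ge y_\eta$ and all $f\in\mathfrak F$. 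Local uniform convergence plus this uniform tail upgrades continuity to $C_b([0,\infty))$.

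Next, $H_f$ is strictly increasing, and $H_f(s)$ is jointly continuous in $(f,s)$ on $\mathfrak F\times[0,1)$. Hence $s^f_\varrho$ is jointly continuous on $\mathfrak F\times[0,1)$. Near $\varrho=1$, the uniform blow-up of $H_f$ gives $s^f_\varrho\to1$ uniformly in $f$, so $\sup_y|1-\phi^f_\varrho|=1-s^f_\varrho\to0$ uniformly. Since $\phi^f_\varrho\ge\phi^f_0$ by monotonicity, the tails are uniform in $(f,\varrho)$. Joint continuity of $\phi^f_\varrho(y)=\phi^f_0(y+c^f_\varrho)$ then follows from uniform equicontinuity of $\{\phi^f_0\}$, since $\phi_0^{f\prime}=\sqrt{2P_f(\phi^f_0)}$ is bounded uniformly. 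Together these give \eqref{eq:family-profile-continuity}.

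The second step is continuity of $\mu^f_\varrho$ and the minimum. The potentials are $V^f_\varrho=-f'(\phi^f_\varrho)$, and
\[
 \|V^{f}_{\varrho}-V^{g}_{\sigma}\|_\infty
 \le\|f'-g'\|_{C^0[0,1]}
 +\omega\bigl(\|\phi^f_\varrho-\phi^g_\sigma\|_\infty\bigr),
\]
where $\omega$ is the common H\"older modulus $Ct^\gamma$ from \eqref{eq:compact-reaction-family}. So $(f,\varrho)\mapsto V^f_\varrho$ is continuous into $L^\infty(\R_+)$. Joint continuity of $\mu^f_\varrho$ then follows from \eqref{eq:potential-lipschitz} together with continuity in the boundary parameter from Lemma~\ref{lem:halfline-form-continuity}.

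The one delicate point is the Dirichlet end, since that lemma's constant $C_V$ depends on $V$. I would check that $C_V$ depends only on $\|V\|_\infty$, which is uniformly bounded over the family. This is visible in its proof, where the bounds on $\|h'\|_2$ and $\beta|h(0)|^2$ involve only $\|V\|_\infty$. With that, the convergence $\mu^{f_j}_{\varrho_j}\to\mu^f_\varrho$ is uniform enough to handle $\varrho_j\to0$.

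Positivity of each $\mu^f_\varrho$ is Proposition~\ref{prop:uniform-gap}, applied to the single reaction $f$. A positive continuous function on the compact set $\mathfrak F\times[0,1]$ has a positive minimum, which is \eqref{eq:family-uniform-gap}.

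The main obstacle I expect is the uniformity of the tail near $s=1$, i.e.\ controlling $P_f$ from below by $(1-s)^2$ uniformly. This is where $\min_{\mathfrak F}|f'(1)|>0$ and the H\"older bound enter. I would first try a Taylor estimate with remainder $C(1-s)^{1+\gamma}$ on a uniform neighbourhood of $1$, and use compactness away from it.
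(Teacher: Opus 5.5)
Your proposal is correct and is essentially the paper's argument: uniform endpoint data from $C^1$ compactness, a common exponential tail from the quadrature near $s=1$, continuity of the trace $s^f_\varrho$ via $H_f$, $L^\infty$ convergence of the potentials, and then \eqref{eq:potential-lipschitz}, Lemma~\ref{lem:halfline-form-continuity}, pointwise positivity from Proposition~\ref{prop:uniform-gap}, and compactness. Two small points: uniform convergence $s^f_\varrho\to1$ as $\varrho\to1$ actually rests on $\sup_f H_f(s_0)<\infty$ for each $s_0<1$ (uniform positivity of $P_f$ away from $1$), not on a uniform blow-up of $H_f$; and your ``delicate point'' about $C_V$ is not needed, since splitting $|\mu^{f_j}_{\varrho_j}-\mu^f_\varrho|$ through $\lambda(-\partial_y^2+V^f_\varrho,\R_+,\varrho_j)$ applies Lemma~\ref{lem:halfline-form-continuity} only to the fixed limit potential $V^f_\varrho$ (your observation that $C_V$ depends only on $\|V\|_\infty$ is nonetheless correct).
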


\begin{proof}
The compact family has uniform endpoint data:
\begin{equation}\label{eq:family-basic-data}
 \begin{gathered}
 a_0:=\min_f f'(0)>0,\qquad
 a_1:=\min_f(-f'(1))>0,\\
 A_-:=\min_f\int_0^1 f(s)\,ds>0,\\
 M_1:=\sup_f\|f\|_{C^1([0,1])}<\infty,\qquad
 M_\gamma:=\sup_f[f']_{C^{0,\gamma}([0,1])}<\infty.
 \end{gathered}
\end{equation}
For each compact $I\Subset(0,1)$, positivity and compactness also give
$\inf_f\inf_I f>0$.  The Dirichlet quadrature then has a uniform finite
entrance time into a fixed neighborhood of $1$, while $a_1$ and
$M_\gamma$ give a common exponential tail there.  Since every Robin profile is a
translate of its Dirichlet profile, for some $C,c>0$,
\begin{equation}\label{eq:family-common-exponential-tail}
 \sup_{f\in\mathfrak F}\sup_{\varrho\in[0,1]}
 (1-\phi^f_\varrho(y))\le Ce^{-cy}.
\end{equation}
If $f_n\to f_*$ in $C^1([0,1])$ and $\varrho_n\to\varrho_*$, the defining
boundary-trace equation gives trace convergence when $0<\varrho_*<1$; the
uniform positivity of $P_f$ away from $1$ handles $\varrho_*=1$, and the common
bound on $P_f(0)$ handles $\varrho_*=0$.  The quadrature and the preceding common
tail then give $\phi^{f_n}_{\varrho_n}\to\phi^{f_*}_{\varrho_*}$ uniformly.  Hence the
potentials converge in $L^\infty$, and Lemma~\ref{lem:halfline-form-continuity}
plus \eqref{eq:potential-lipschitz} gives continuity of the spectral bottoms.
Pointwise positivity follows from Proposition~\ref{prop:uniform-gap}; compactness
then yields \eqref{eq:family-uniform-gap}.
\end{proof}

\section{Endpoint compactness and coalescence}
\label{sec:compactness}

Compactness at the Neumann endpoint is covered by the usual oblique
estimates.  The Dirichlet endpoint is the one that needs a separate argument:
as $\varrho\downarrow0$, the Robin coefficient in \eqref{eq:beta} diverges.
At the reciprocal scale the equation becomes harmonic in the limit, and the
possible boundary trace is governed by the following Liouville statement.

\begin{lemma}[A harmonic Robin Liouville lemma]\label{lem:harmonic-robin}
Let $v\in C^2(\Hh^d)\cap C^1(\overline{\Hh^d})$ be bounded and harmonic.
If
\begin{equation}\label{eq:unit-robin}
 \partial_\nu v+v=0\quad\text{on }\partial\Hh^d,
\end{equation}
then $v\equiv0$.
\end{lemma}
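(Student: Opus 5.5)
The plan is a maximum-principle argument: show that $\sup v\le0$, and apply the same argument to $-v$ to get $\inf v\ge0$. Write $\Hh^d=\R^{d-1}\times\R_+$ with coordinates $(x',y)$. The outward normal is $-\partial_y$, so \eqref{eq:unit-robin} reads $\partial_yv=v$ on $\{y=0\}$. The mechanism is simple. At a boundary maximum with positive value $M$, the boundary condition forces $\partial_yv=M>0$, so $v$ increases into the domain, which is incompatible with a maximum. The work consists in making the supremum attained, and attained at a boundary point.

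First, I will reduce to the boundary trace. Let $h=v(\cdot,0)$, which is bounded and continuous, and let $Ph$ be its Poisson extension. Then $v-Ph$ is bounded, harmonic, continuous up to $\{y=0\}$, and vanishes there. Its odd reflection across $\{y=0\}$ is a bounded harmonic function on $\R^d$, so it is constant by Liouville's theorem, and hence zero. Thus $v=Ph$. Since the Poisson kernel is a probability density, $M:=\sup_{\Hh^d}v=\sup_{\R^{d-1}}h$. Consequently there are boundary points $(x_n',0)$ with $v(x_n',0)\to M$. This step is what rules out the supremum escaping to $y\to\infty$.

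Next comes compactness by tangential translation. Suppose $M>0$ and set $v_n(x',y)=v(x'+x_n',y)$. Each $v_n$ is harmonic, bounded by $\|v\|_\infty$, and satisfies $\partial_yv_n=v_n$ on the boundary. Standard local boundary Schauder estimates for the constant-coefficient oblique (Robin) problem give a uniform $C^{1,a}$ bound on $\overline{B_2^+}$ translates. Interior estimates give local $C^2$ bounds. After extracting a subsequence, $v_n\to v_\infty$ in $C^1_{\rm loc}(\overline{\Hh^d})\cap C^2_{\rm loc}(\Hh^d)$. The limit $v_\infty$ is bounded and harmonic, satisfies the same Robin condition, has $\sup v_\infty\le M$, and has $v_\infty(0,0)=M$.

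Now apply the strong maximum principle and Hopf's lemma. If $v_\infty$ is constant, then it equals $M$, and the boundary condition gives $0=\partial_yv_\infty=M$, a contradiction. Otherwise $v_\infty<M$ in the open half-space, and the maximum $M$ is attained at the boundary point $0$. The half-space satisfies the interior ball condition there, so Hopf's lemma gives $\partial_yv_\infty(0,0)<0$. This contradicts $\partial_yv_\infty(0,0)=v_\infty(0,0)=M>0$. Hence $M\le0$. Applying the argument to $-v$, which satisfies the same linear problem, gives $\inf v\ge0$, so $v\equiv0$.

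I expect the main obstacle to be the uniform $C^1$ boundary estimate in the compactness step, since the hypothesis only gives $v\in C^1(\overline{\Hh^d})$ with no uniform bound on the gradient. The first thing I would try is to note that $w:=\partial_yv-v$ is harmonic. Where $v$ is smooth up to the boundary, $w$ vanishes on $\{y=0\}$, so odd reflection of $w$ reduces matters to interior estimates. If the regularity needed for this is unavailable, I would cite the standard local $C^{1,a}$ estimates for the Robin problem (Gilbarg--Trudinger, Lieberman), which depend only on $\|v\|_\infty$.
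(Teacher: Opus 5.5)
Your proof is correct, but after the common first step it takes a different route from the paper. Both arguments start by subtracting the Poisson extension of the trace $g=v(\cdot,0)$ (your $h$), reflecting oddly, and using Liouville to get $v=Pg$. The paper then works only with the trace. The Robin condition becomes $(|D|+1)g=0$ in distributions, so $q(t)=e^{-t|D|}g$ solves $q'=q$, hence $q(t)=e^tg$. The $L^\infty$ contractivity of the Poisson semigroup then forces $g=0$, with no compactness and no boundary regularity theory. You use $v=Pg$ only to show that $\sup v$ is approached along the boundary, and then run a maximum-principle contradiction after tangential translation. The cost of your route is the uniform local $C^{1,a}$ Robin estimate you identified. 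The gain is that it uses only translation invariance and the sign of the Robin coefficient, not the Fourier structure of the flat Laplacian. Incidentally, Hopf's lemma is superfluous: at the boundary maximum the one-sided derivative $\partial_yv_\infty(0,0)$ is trivially $\le0$. That already contradicts $\partial_yv_\infty(0,0)=M>0$, and it also covers the constant case.

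Your self-contained fallback for the $C^1$ estimate is circular as stated. Interior estimates for the odd reflection $W$ of $w=\partial_yv-v$ on a ball bound $W$ by its size on a larger ball. That larger ball again reaches $\{y=0\}$, where $\partial_yv$ is so far controlled only by $C\|v\|_\infty/y$. The missing input is an $L^2$ gradient bound:
\begin{itemize}
\item Since $v\in C^1(\overline{\Hh^d})$, integrate by parts on $\{y>\epsilon\}$ and let $\epsilon\downarrow0$. This gives $\int\nabla v\cdot\nabla\varphi+\int_{\partial\Hh^d}v\varphi=0$ for $\varphi\in C^1_c(\overline{\Hh^d})$.
\item Test with $\eta^2v$. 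The boundary term has the good sign, so $\int\eta^2|\nabla v|^2\le4\int v^2|\nabla\eta|^2$.
\item The mean-value property then bounds $W$ near the boundary, uniformly in the tangential position. Interior gradient estimates bound it away from the boundary.
\item Hence $W$ is a bounded entire harmonic function vanishing on $\{y=0\}$, so $W\equiv0$. Then $\partial_yv=v$ throughout, so $v(x',y)=e^yv(x',0)$, and boundedness gives $v\equiv0$.
\end{itemize}
With this one ingredient your fallback proves the lemma on its own, with no compactness or maximum principle.
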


\begin{proof}
If $d=1$, then $v$ is an affine function on $\R_+$; boundedness makes it
constant, and the boundary condition makes that constant zero.  Assume
$d\geq2$ and write $\Hh^d=\{(x',y):y>0\}$.  Then
\eqref{eq:unit-robin} is $\partial_yv=v$ at $y=0$.
Set $g=v(\cdot,0)\in L^\infty(\R^{d-1})$.  The Poisson extension of $g$
is bounded and has trace $g$.  Subtracting this extension from $v$ gives a bounded harmonic function with
zero trace.  Odd reflection across $\{y=0\}$ and Liouville's theorem make
that difference vanish.  In semigroup notation,
\[
 v(\cdot,y)=e^{-y|D|}g,
 \qquad |D|=(-\Delta_{x'})^{1/2}.
\]
The boundary condition says, in distributions,
\begin{equation}\label{eq:fractional-trace}
 (|D|+1)g=0.
\end{equation}
Apply the Poisson semigroup to \eqref{eq:fractional-trace}.  The function
$q(t):=e^{-t|D|}g$ satisfies $q'(t)=q(t)$ in tempered distributions, and
so $q(t)=e^tg$.  On the other hand, the Poisson semigroup is a
contraction on $L^\infty$, so
\[
 e^t\|g\|_\infty=\|q(t)\|_\infty\leq\|g\|_\infty
 \qquad(t>0).
\]
This forces $g=0$, and the Poisson representation gives $v=0$.
\end{proof}

Vanishing traces alone do not give convergence on closed boundary patches.  We
use the following boundary decay estimate, stated for the Laplacian; uniform
flattening preserves its constants.

\begin{lemma}[Uniform boundary decay]\label{lem:boundary-decay}
Let $G_j$ be domains whose boundaries in $B_{2r}(p_j)$ are given by
uniformly $C^{1,1}$ graphs, and let
$h_j\in L^\infty(G_j\cap B_{2r}(p_j))$.  Suppose that
\[
 \begin{gathered}
 w_j\in H^1(G_j\cap B_{2r}(p_j))
 \cap C(\overline{G_j\cap B_{2r}(p_j)}),\\
 -\Delta w_j=h_j\quad\hbox{in }G_j\cap B_{2r}(p_j),\\
 \|w_j\|_\infty\leq M,\qquad \|h_j\|_\infty\leq M_1.
 \end{gathered}
\]
There are $\eta\in(0,1)$ and $C>0$, depending only on the dimension and
the uniform graph bounds, such that, for $0<\delta<r/8$,
\begin{align}
 \sup_{\substack{x\in G_j\cap B_{r/2}(p_j)\\
                  \dist(x,\partial G_j)\leq\delta}}
 |w_j(x)|
 &\leq \sup_{\partial G_j\cap B_r(p_j)}|w_j|
 \notag\\
 &\quad+C(M+M_1r^2)\left(\frac{\delta}{r}\right)^\eta .
 \label{eq:uniform-boundary-decay}
\end{align}
The constants are uniform over the family $G_j$.
\end{lemma}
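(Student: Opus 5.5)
We split $w_j=\bar w_j+\tilde w_j$ on the boundary window and estimate each part separately. Here $\bar w_j$ carries the boundary values and the exterior bound $M$ and is harmonic, while $\tilde w_j$ absorbs the source and vanishes on the whole boundary of the window. After translation and rotation we take $p_j=0$. Since everything is invariant under the scaling $x\mapsto x/r$, which sends $M_1$ to $M_1r^2$ and $\delta$ to $\delta/r$, it suffices to treat $r=1$. The uniform $C^{1,1}$ graph bounds give a uniform exterior-ball (indeed exterior-cone) condition at every point of $\partial G_j\cap B_1$ with a fixed radius $\rho_0$. This is the only geometric input, and it is why the constants are uniform over the family.

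\textbf{The source term.} Let $U_j=G_j\cap B_{2}$, let $m_j=\sup_{\partial G_j\cap B_1}|w_j|$, and let $\tilde w_j\in H^1_0(U_j)$ solve $-\Delta\tilde w_j=h_j$. The comparison function $M_1(4-|x|^2)/(2d)$ gives $|\tilde w_j|\le CM_1$. Near a boundary point $x_0\in\partial G_j\cap B_1$ we use the exterior ball $B_{\rho_0}(e)$ touching at $x_0$ and the classical barrier
\[
 b(x)=\rho_0^{-\sigma}-|x-e|^{-\sigma},\qquad \sigma\ \text{large}.
\]
This $b$ is superharmonic with $-\Delta b\ge c>0$ on $B_2$, vanishes at $x_0$, is positive on $\overline{U_j}\setminus\{x_0\}$, and satisfies $b(x)\le C|x-x_0|$. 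Comparing $\pm\tilde w_j$ with $C M_1 b$ gives $|\tilde w_j(x)|\le CM_1\dist(x,\partial G_j)$ in $B_{1/2}$, and this is stronger than a H\"older rate.

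\textbf{The harmonic part.} The function $\bar w_j=w_j-\tilde w_j$ is harmonic and $|\bar w_j|\le M+CM_1$. On $\partial G_j\cap B_1$ we have $|\bar w_j|\le m_j$, since $\tilde w_j=0$ there. The standard boundary H\"older estimate under an exterior cone condition (iterated oscillation decay: harmonic measure of the exterior cone is bounded below) gives, for $x_0\in\partial G_j\cap B_{3/4}$ and $\rho\le1/8$,
\[
 \sup_{U_j\cap B_\rho(x_0)}(\bar w_j-m_j)_+\le C(M+M_1)\rho^\eta .
\]
The same bound holds for $(-\bar w_j-m_j)_+$. To obtain it we apply the decay to $(\bar w_j-m_j)_+$, which is subharmonic, vanishes on $\partial G_j\cap B_1$, and is bounded by $M+CM_1$. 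Given $x\in G_j\cap B_{1/2}$ with $\dist(x,\partial G_j)\le\delta<1/8$, we pick a nearest boundary point $x_0$. It lies in $B_{3/4}$ because $\delta$ is small, and we take $\rho=2\delta$. Adding the two pieces gives \eqref{eq:uniform-boundary-decay} with $C(M+M_1)\delta^\eta$ at $r=1$, and rescaling gives the stated form.

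\textbf{The main obstacle} is the uniform oscillation-decay step. We need the harmonic measure of the exterior portion to be bounded below at every scale up to $\rho_0$, with a constant independent of $j$. I would prove it by an explicit comparison on each dyadic annulus. On $B_{2^{-k}}(x_0)$, the function
\[
 s\mapsto 1-\bigl(\text{harmonic measure of }\partial B_{2^{-k}}\bigr)
\]
is bounded below at scale $2^{-k-1}$ via the barrier $b$ above, after rescaling the exterior ball to unit size. This gives a contraction factor $\theta<1$ per dyadic step, hence $\eta=\log_2(1/\theta)$. The factor $\theta$ depends only on $d$ and on the $C^{1,1}$ bounds. Continuity of $w_j$ up to the boundary justifies the maximum-principle comparisons, and the $H^1$ hypothesis makes $\tilde w_j$ well defined.
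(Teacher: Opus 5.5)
Your argument is correct, but it is organized differently from the paper's proof.

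\textbf{The paper's route.} The paper does not split $w_j$. It sets $a_j=\sup_{\partial G_j\cap B_r(p_j)}|w_j|$ and $z_j^\pm=(\pm w_j-a_j)_+$. Kato's inequality gives $-\Delta z_j^\pm\le M_1$ weakly, with $z_j^\pm=0$ on the physical boundary. The paper then recenters at a nearest boundary point, rescales by $r/4$, flattens, and cites the boundary H\"older estimate for subsolutions from Gilbarg--Trudinger as a black box. The source term is thus absorbed into a De Giorgi--Nash--Moser-type estimate.

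\textbf{Your route.} You remove the source first, through the auxiliary Dirichlet problem on $G_j\cap B_2$. You control that piece by an explicit exterior-ball barrier, which gives a linear rate in $\dist(x,\partial G_j)$, better than needed. What remains is a harmonic function whose truncation $(\bar w_j-m_j)_+$ is subharmonic without any appeal to Kato. You then prove its oscillation decay by hand with dyadic exterior-ball barriers.

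\textbf{Comparison.} The paper's argument is shorter. Because the cited estimate tolerates the bounded measurable coefficients produced by flattening, it would survive under weaker (e.g.\ uniformly Lipschitz) graph hypotheses. Yours is self-contained and elementary, and it makes the dependence of $\eta$ and $C$ on $d$ and the exterior-ball radius explicit. The price is that it relies on the constant-coefficient Laplacian and genuinely uses the $C^{1,1}$ exterior-ball property.

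\textbf{Points to make explicit when you write it up.}
\begin{itemize}
\item The barrier for $\tilde w_j$ is also what shows that $\tilde w_j$ extends continuously by zero to $\partial G_j\cap B_1$. That is needed both for $|\bar w_j|\le m_j$ there and for placing the comparison functions in $H^1_0$ of the small windows.
\item In the dyadic step at scale $\rho$, use an exterior ball of radius less than $\rho/4$ tangent at $x_0$. Such a ball exists, since it sits inside the exterior ball of radius $\rho_0$. Its centre $e$ is then strictly closer to every point of $B_{\rho/2}(x_0)$ than to any point of $\partial B_\rho(x_0)$. Normalizing $\rho_1^{-\sigma}-|x-e|^{-\sigma}$ to be at least $1$ on the outer sphere then yields a contraction factor $\theta<1$ depending only on $d$ and $\sigma$.
\end{itemize}
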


\begin{proof}
Set
\[
 a_j=\sup_{\partial G_j\cap B_r(p_j)}|w_j|,
 \qquad z_j^\pm=(\pm w_j-a_j)_+ .
\]
By Kato's inequality, $-\Delta z_j^\pm\leq M_1$ weakly and
$z_j^\pm=0$ on the physical boundary.  For
$x\in G_j\cap B_{r/2}(p_j)$ with
$d_x=\dist(x,\partial G_j)\leq r/8$, choose a nearest boundary point $q$.
Then $B_{r/4}(q)\subset B_r(p_j)$.  After translating to $q$, rescaling by
$r/4$, and flattening the boundary, the boundary H\"older estimate for
subsolutions \cite[Theorem~8.29]{GilbargTrudinger} gives
\[
 z_j^\pm(x)\leq
 C(M+M_1r^2)\left(\frac{d_x}{r}\right)^\eta .
\]
The constants are uniform because the $C^{1,1}$ graph bounds are uniform.
Applying the estimate to both signs proves
\eqref{eq:uniform-boundary-decay}.
\end{proof}

The decay estimate is used only to close the boundary part of the compactness
argument.  Away from the boundary we use the smooth-convergence framework of
\cite[Appendix A]{BerestyckiGrahamStrongKPP}.

\begin{lemma}[Joint compactness through the endpoints]\label{lem:joint-compactness}
Let $D_n$ be a uniformly $C^{2,\gamma}$ sequence converging locally in
$C^{2,\gamma'}$, for some $\gamma'<\gamma$, to a domain $D_\infty$.  Let
$\varrho_n\to\varrho_\infty\in[0,1]$, and suppose
$u_n\in C^2(D_n)\cap C^1(\overline{D_n})$ satisfies
\begin{equation}\label{eq:compactness-pde}
 0\leq u_n\leq1,\qquad -\Delta u_n=f(u_n)\ \text{in }D_n,
 \qquad \cN_{\varrho_n}u_n=0\ \text{on }\partial D_n.
\end{equation}
After passage to a subsequence, $u_n$ converges locally uniformly on the
closure of each component that persists under this local smooth convergence
to a function $u_\infty$ satisfying
\begin{equation}\label{eq:limit-pde}
 -\Delta u_\infty=f(u_\infty)\ \text{in }D_\infty,
 \qquad \cN_{\varrho_\infty}u_\infty=0\ \text{on }\partial D_\infty.
\end{equation}
\end{lemma}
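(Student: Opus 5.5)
Since $f(u_n)$ is bounded by $\|f\|_{C([0,1])}$, interior $W^{2,p}$ and Schauder estimates (using $f\in C^{1,\gamma}$ and $0\le u_n\le1$) give local $C^{2,\gamma}$ bounds on compact subsets of $D_\infty$ that are eventually contained in $D_n$. A diagonal argument then yields a subsequence converging in $C^2_{\rm loc}(D_\infty)$ to a solution of $-\Delta u_\infty=f(u_\infty)$. All remaining work is at the boundary, where I split into two cases according to whether $\beta_{\varrho_n}$ stays bounded. The domains are pulled back to a common reference through the local $C^{2,\gamma'}$ convergence of charts, as in \cite[Appendix A]{BerestyckiGrahamStrongKPP}.

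\emph{Case $\varrho_\infty>0$.} Then $\beta_{\varrho_n}=(1-\varrho_n)/\varrho_n$ is bounded, and after flattening, $u_n$ solves a uniformly elliptic equation with oblique condition $\partial_\nu u_n+\beta_{\varrho_n}u_n=0$ whose coefficients are bounded uniformly. Boundary $W^{2,p}$ estimates for the oblique problem, followed by Schauder estimates, give uniform $C^{1,\gamma'}$ (indeed $C^{2,\gamma'}$) bounds up to the boundary on each fixed boundary patch. Arzel\`a--Ascoli gives $C^1$ convergence up to the boundary, so the limit satisfies $\partial_\nu u_\infty+\beta_{\varrho_\infty}u_\infty=0$, i.e.\ $\cN_{\varrho_\infty}u_\infty=0$. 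This includes the Neumann endpoint $\beta=0$.

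\emph{Case $\varrho_\infty=0$.} Here $\beta_n:=\beta_{\varrho_n}\to\infty$ (or $\varrho_n=0$ exactly, which is the trivial Dirichlet subcase), and the goal is to show that boundary traces tend to zero uniformly on patches. Suppose not: there are $x_n\in\partial D_n\cap B_\rho(p)$ with $u_n(x_n)\ge\epsilon>0$. Set $v_n(y)=u_n(x_n+\beta_n^{-1}y)$ on $\beta_n(D_n-x_n)$. Then $-\Delta v_n=\beta_n^{-2}f(v_n)\to0$, the rescaled boundary satisfies $\partial_\nu v_n+v_n=0$, and the rescaled domains converge locally in $C^{2,\gamma'}$ to a half-space because the curvatures scale out. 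The oblique estimates of the first case apply uniformly to $v_n$, so a subsequence converges in $C^1_{\rm loc}$ up to the boundary to a bounded harmonic $v$ on $\Hh^d$ with $\partial_\nu v+v=0$ and $v(0)\ge\epsilon$. This contradicts Lemma~\ref{lem:harmonic-robin}, so $\sup_{\partial D_n\cap B_\rho(p)}u_n\to0$. Lemma~\ref{lem:boundary-decay}, applied with $M=1$ and $M_1=\sup_{[0,1]}|f|$, then gives $u_n(x)\le o(1)+C\delta^\eta$ within distance $\delta$ of the boundary, uniformly in $n$. Combining this uniform modulus near the boundary with interior convergence, $u_n\to u_\infty$ locally uniformly on $\overline{D_\infty}$, with $u_\infty$ continuous and $u_\infty=0$ on $\partial D_\infty$. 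Boundary Schauder theory for the Dirichlet problem then upgrades $u_\infty$ to a classical solution, so $\cN_0u_\infty=0$.

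The main obstacle is the uniformity of the oblique boundary estimates in the blow-up of the second case, which requires that $\beta_n$-rescaled domains form a uniformly $C^{2,\gamma}$ family; they do, since dilation only improves the graph bounds. A second point of care is that local uniform convergence must be stated on the closure of each component that persists in the limit; components escaping to infinity or pinching off are simply discarded, as in the Appendix A framework.
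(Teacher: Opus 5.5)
Your proposal is correct and follows essentially the same route as the paper. Both arguments use interior $C^2_{\rm loc}$ compactness and uniform oblique estimates when $\varrho_\infty>0$; at the Dirichlet end both blow up by $\beta_n^{-1}$ at a boundary point with persistent trace, reach a contradiction with Lemma~\ref{lem:harmonic-robin}, and then use Lemma~\ref{lem:boundary-decay} to upgrade interior convergence to convergence on the closure with zero trace. The only differences are minor. For the rescaled patches the paper cites the small-Lipschitz localized oblique $W^{2,p}$ estimate of Dong--Li, whereas you use standard oblique estimates, justified because dilation by $\beta_n\ge1$ only improves the $C^{2,\gamma}$ graph bounds; this works equally well. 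You also add a final boundary-regularity step for the Dirichlet limit, which is harmless.
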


\begin{proof}
Interior elliptic estimates already give subsequential
$C^2_{\rm loc}$ convergence.  At a limit parameter
$\varrho_\infty>0$, the boundary causes no loss of compactness: after dividing
by $\varrho_n$, the Robin coefficients $\beta_{\varrho_n}$ stay bounded.  Standard
local oblique estimates in flattened charts then give convergence up to the
boundary, including the Neumann case, and the limit satisfies
\eqref{eq:limit-pde}.

Suppose now that $\varrho_\infty=0$.  Interior compactness is unchanged;
the question is whether a boundary trace can persist as the Robin coefficient
diverges.  Terms with $\varrho_n=0$ are already covered by
ordinary Dirichlet compactness, so after discarding that case and taking a
subsequence we may assume $\varrho_n>0$ for every $n$.
Then
\[
 \beta_n:=\beta_{\varrho_n}\longrightarrow\infty,
 \qquad \partial_\nu u_n+\beta_nu_n=0.
\]
The boundary traces must vanish locally.  Otherwise some fixed chart
contains points $p_n\in\partial D_n$ and there is $\varepsilon>0$ such that
$u_n(p_n)\geq\varepsilon$.  Translate $p_n$ to
the origin and choose an orthogonal map $Q_n$ such that
$Q_ne_d=-\nu_{D_n}(p_n)$.  Define the rescaled domain and function by
\[
 E_n:=\{z:p_n+\beta_n^{-1}Q_nz\in D_n\},
 \qquad
 v_n(z):=u_n\bigl(p_n+\beta_n^{-1}Q_nz\bigr).
\]
If the normalized boundary graph for $D_n$ in these coordinates is
$y=h_n(x')$, then
the graph for $E_n$ is
\[
 y=\widehat h_n(z'):=\beta_nh_n(z'/\beta_n).
\]
Because the graphs are normalized by $h_n(0)=0$ and
$\nabla h_n(0)=0$, the uniform $C^{2,\gamma}$ bounds give, on every fixed
ball $B_L\subset\R^{d-1}$,
\[
 \|\widehat h_n\|_{C^1(B_L)}
 +\|D^2\widehat h_n\|_{L^\infty(B_L)}
 \leq C_L\beta_n^{-1}\longrightarrow0.
\]
The chart radii are enlarged by the factor $\beta_n$, while the rescaled
graphs flatten, and $E_n\to\Hh^d$ locally in $C^{1,1}$, including the
boundary.  If
\[
 \widehat\nu_n(z):=Q_n^T\nu_{D_n}
 \bigl(p_n+\beta_n^{-1}Q_nz\bigr),
\]
then the chain rule gives
\begin{equation}\label{eq:rescaled}
 -\Delta v_n=\beta_n^{-2}f(v_n),\qquad
 \partial_{\widehat\nu_n}v_n+v_n=0,\qquad 0\leq v_n\leq1.
\end{equation}
Under the scale $\beta_n^{-1}$, the Robin coefficient in
\eqref{eq:rescaled} becomes $1$ and the right-hand side carries the factor
$\beta_n^{-2}$.  The boundary operators are therefore uniformly oblique on
the rescaled patches.  In dimension one the required boundary compactness is
 elementary.  Assume $d\geq2$ and fix two nested boundary patches, say of
radii $L'<L$.  For all sufficiently large $n$, the graph of $\partial E_n$
in $B_L$ has Lipschitz norm below the smallness threshold in the oblique
$W^{2,p}$ theory of \cite[Theorem~2.3]{DongLiOblique}.  In the notation of
that theorem our operator has $a_{ij}=\delta_{ij}$, $a_i=a_0=0$, while the
boundary operator has $b_0=1$, $b=\widehat\nu_n$, and datum $g=0$.  Thus
all coefficient norms occurring in its estimate are bounded independently of
$n$; in particular
\[
 \widehat\nu_n\cdot\nu_{E_n}=1
\]
on the boundary and $\widehat\nu_n$ is uniformly $C^{0,1}$ on $B_L$.
For each fixed $n$, standard local oblique regularity first places $v_n$ in
$W^{2,p}$ up to smaller boundary patches.  We then use the localized estimate
(5.3) of \cite{DongLiOblique}: choose a bounded local domain inside the
larger patch, with physical boundary portion
$T_n\subset\partial E_n\cap B_L$, so that $E_n\cap B_{L'}$ lies a fixed
positive distance from its artificial boundary.  The small Lipschitz norms
make this choice uniform in $n$.  Estimate (5.3) applies only on the smaller
region and therefore requires no boundary regularity or boundary condition on
the artificial part.  Its right-hand side consists of the $L^p$ norm of
$v_n$, the $L^p$ norm of $\Delta v_n$, and the
$W^{1-1/p,p}(T_n)$ norm of the oblique datum.  The last term is zero,
$0\leq v_n\leq1$, and $\Delta v_n=-\beta_n^{-2}f(v_n)$ is uniformly
bounded in $L^p$.  Fixing $p>d$ gives
\[
 \|v_n\|_{W^{2,p}(E_n\cap B_{L'})}\leq C_{L,L',p}
\]
with a constant independent of $n$.  With one fixed $p>d$, Sobolev embedding gives uniform $C^{1,\theta}$ bounds
on compact half-balls for some $\theta>0$.
A diagonal subsequence yields a limit
$v\in C^1_{\rm loc}(\overline{\Hh^d})\cap C^2(\Hh^d)$, and
\eqref{eq:rescaled} passes to
\[
 \Delta v=0,\qquad \partial_\nu v+v=0,\qquad
 0\leq v\leq1,\qquad v(0)\geq\varepsilon.
\]
Lemma~\ref{lem:harmonic-robin} rules out such a limit.  Consequently,
\begin{equation}\label{eq:trace-vanish}
 \sup_{K\cap\partial D_n}u_n\longrightarrow0
\end{equation}
for every fixed compact set $K$ in a persistent chart.

Interior estimates give a solution $u_\infty$ in $D_\infty$.
To pass the interior convergence up to the boundary, apply
Lemma~\ref{lem:boundary-decay} in finitely many smaller charts covering a
fixed compact subset of $\partial D_\infty$.  Write $p_{n,j}$ for the
corresponding centers on $\partial D_n$, and take a common chart radius
$r>0$.  With $M=1$ and $M_1=\|f\|_{L^\infty(0,1)}$, equations
\eqref{eq:trace-vanish} and
\eqref{eq:uniform-boundary-decay} imply, for each $j$,
\begin{equation}\label{eq:boundary-layer-small}
 \limsup_{n\to\infty}
 \sup_{\substack{x\in D_n\cap B_{r/2}(p_{n,j})\\
                 \dist(x,\partial D_n)\leq\delta}}u_n(x)
 \leq C(1+M_1r^2)\left(\frac{\delta}{r}\right)^\eta.
\end{equation}
Passing first to the limit at interior points and then sending
$\delta\downarrow0$ shows that $u_\infty$ extends continuously by zero to the
limiting boundary.
For fixed $\delta>0$, interior compactness is uniform on the portion of the
charts at distance at least $\delta$ from the boundary.  On the complementary
boundary layer, both $u_n$ and the zero extension of $u_\infty$ are uniformly
small by \eqref{eq:boundary-layer-small}.  Letting first $n\to\infty$ and then $\delta\downarrow0$ proves locally
uniform convergence on the closure, with zero trace on the limiting boundary.
\end{proof}

For $\kappa>0$, put $D_\kappa=\kappa\Omega$.  When
$\partial\Omega\neq\varnothing$, define
\begin{equation}\label{eq:Phi}
 d_\kappa(x):=\dist(x,\partial D_\kappa),
 \qquad
 \Phi_{\kappa,\varrho}(x):=\phi_\varrho(d_\kappa(x)).
\end{equation}
When $\Omega=\R^d$, we set $\Phi_{\kappa,\varrho}\equiv1$ and do not use
$d_\kappa$.
Let $\cU_{\kappa,\varrho}$ be the set of bounded positive solutions of
\begin{equation}\label{eq:rho-problem}
 -\Delta u=f(u)\ \text{in }D_\kappa,
 \qquad \cN_\varrho u=0\ \text{on }\partial D_\kappa.
\end{equation}
The bounded maximum principle in the general Robin framework
\cite{BerestyckiGrahamStrongKPP} gives
$0<u\leq1$ for every such solution: the sign assumption $f<0$ on
$(1,\infty)$ rules out a bounded positive supersaturation above the stable
state $1$.

\begin{lemma}[Interior saturation]\label{lem:interior-saturation}
Given $s\in(0,1)$, there is $R=R(f,s)>0$ with the following property.  If
$D\subset\R^d$ is a domain, $u$ is a bounded positive solution of
$-\Delta u=f(u)$ in $D$ with $0<u\leq1$, and $B_R(x)\subset D$, then
\begin{equation}\label{eq:interior-lower}
 u(x)\geq s.
\end{equation}
The radius $R$ is independent of $D$ and of any boundary condition imposed on
$\partial D$.
\end{lemma}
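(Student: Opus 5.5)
Since $f'(0)>0$, small multiples of a Dirichlet ground state on a large ball are subsolutions, and this gives a sliding-ball argument. Fix $s\in(0,1)$.

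\emph{Step 1: a compactly supported subsolution.} Choose $R_1$ so large that $\lambda_1(B_{R_1})=\lambda_1(B_1)R_1^{-2}<f'(0)/2$. Let $\psi>0$ be the first Dirichlet eigenfunction of $B_{R_1}$ with $\max\psi=1$. Since $f\in C^1$ and $f(0)=0$, there is $\epsilon_0>0$ such that $f(t)\ge \tfrac12 f'(0)t$ for $0\le t\le\epsilon_0$. For $0<\epsilon\le\epsilon_0$ this gives
\[
 -\Delta(\epsilon\psi)=\lambda_1(B_{R_1})\epsilon\psi\le f(\epsilon\psi)
\]
in $B_{R_1}$.

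\emph{Step 2: sliding.} Suppose $B_{R_1+1}(x)\subset D$. Set $m:=\min_{\overline{B_{R_1+1}(x)}}u>0$, which is positive by continuity and positivity of $u$. Put $\epsilon=\min(\epsilon_0,m)$. Then $\epsilon\psi(\cdot-z)\le u$ for every center $z$ with $|z-x|\le1$. This does not yet give a uniform bound, since $m$ depends on $u$. Increase $\epsilon$ continuously from this value up to $\epsilon_0$, keeping the ball $B_{R_1}(x)$ fixed. At the first $\epsilon$ where $\epsilon\psi$ touches $u$ from below, the contact point is interior because $\psi=0<u$ on the sphere. The strong maximum principle applied to $w=u-\epsilon\psi\ge0$ then gives a contradiction: $w$ satisfies $-\Delta w+c(x)w\ge0$ with $c$ bounded, using the Lipschitz bound on $f$ over $[0,1]$, and $w$ cannot vanish at an interior point unless $w\equiv0$, which is impossible on the sphere. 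Hence $u\ge\epsilon_0\psi(\cdot-x)$ on $B_{R_1}(x)$, and in particular $u\ge\epsilon_0\psi(0)=:\sigma>0$ on $B_{R_1/2}(x)$ with a constant depending only on $f$ (through $R_1$ and $\epsilon_0$). Translating the center gives: whenever $B_{R_1}(y)\subset D$, we have $u\ge \sigma$ on $B_{R_1/2}(y)$.

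\emph{Step 3: from $\sigma$ to $s$.} This is the main obstacle: upgrading a uniform small lower bound to one that is close to $1$, uniformly in $D$ and $u$. I would argue by contradiction and compactness. Suppose there are $D_n,u_n,x_n$ with $B_{n}(x_n)\subset D_n$ and $u_n(x_n)<s$. Translate $x_n$ to the origin. Interior elliptic estimates, using $0<u_n\le1$, give $C^2_{\rm loc}(\R^d)$ convergence along a subsequence to an entire solution $u$ of $-\Delta u=f(u)$ with $\sigma\le u\le1$, since Step 2 applies on every ball, and with $u(0)\le s<1$. Now $\sigma\le u\le 1$ with $f>0$ on $[\sigma,1)$. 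Let $\ell=\inf u\in[\sigma,1]$. Take points where $u$ is close to $\ell$ and recentre there; by compactness this produces an entire solution $\tilde u$ that attains its minimum $\ell$ at an interior point. At that point $-\Delta\tilde u\le0$, so $f(\ell)\le0$, which forces $\ell=1$. This contradicts $u(0)\le s$. Taking $R=n$ for large $n$ gives the claim, with $R$ depending only on $f$ and $s$; no boundary condition was used.
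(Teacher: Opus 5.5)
Your argument is correct, but it reaches level $s$ in two stages, whereas the paper does it in one. The paper sets $m_s:=\inf_{0<r\le s}f(r)/r$. This is positive because $f(r)/r\to f'(0)>0$ and $f>0$ on $(0,1)$. It then chooses $R$ with $\lambda_1(B_{R/2})<m_s$ and slides the first Dirichlet eigenfunction $v$ of $B_{R/2}$, normalized to have maximum $s$ rather than a small $\epsilon_0$. Every multiple $\theta v$, $0\le\theta\le1$, takes values in $[0,s]$, so $-\Delta(\theta v)=\lambda_1(B_{R/2})\theta v<m_s\theta v\le f(\theta v)$ wherever $\theta v>0$. A single sliding argument, closed by this strict inequality at an interior contact point, gives $u(x)\ge v(0)=s$.

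You use only the linear bound $f(t)\ge\frac12 f'(0)t$ near $0$. So your sliding step yields only a small uniform lower bound $\epsilon_0$, and you need the blow-up step: an entire solution with values in $[\sigma,1]$, translated so that its infimum is attained, must satisfy $f(\inf u)\le0$, hence $\inf u=1$. That step is sound. It is also a robust mechanism that would survive where no explicit subsolution at height $s$ is available. Here, however, the obstacle you flagged disappears once you notice that $f(r)\ge m_s r$ holds on all of $(0,s]$, not just near $0$.

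What the paper's route buys is economy and quantitativeness. Its $R$ is explicit in terms of $m_s$ and $\lambda_1(B_1)$, and it uses nothing beyond $u\in C^2$. Yours is non-constructive and needs interior Schauder compactness plus a translation-compactness argument.

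There are two small slips in Step 2; neither affects Step 3, which only uses the bound at the center pointwise.
\begin{itemize}
 \item The bound $\epsilon_0\psi(0)=\epsilon_0$ holds at $x$ itself, not on all of $B_{R_1/2}(x)$. There you only get $\epsilon_0\min_{B_{R_1/2}}\psi$.
 \item The minimum $m$ should be taken over $\overline{B_{R_1}(x)}$, which does lie in $D$. Correspondingly, the translated statement needs $B_{R_1+1}(y)\subset D$, not merely $B_{R_1}(y)\subset D$, so that the closed comparison ball sits inside the domain.
\end{itemize}
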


\begin{proof}
Set
\[
 m_s:=\inf_{0<r\leq s}\frac{f(r)}r>0
\]
and choose $R$ so large that $\lambda_1(B_{R/2})<m_s$.  Let $v$ be the
positive first Dirichlet eigenfunction of $B_{R/2}$, normalized so that
$\max v=v(0)=s$.  Then every multiple $\theta v$, $0\leq\theta\leq1$, is a
subsolution in $B_{R/2}$, because
\[
 -\Delta(\theta v)=\lambda_1(B_{R/2})\theta v
 \leq m_s\theta v\leq f(\theta v).
\]
Translate the smaller ball so that it is centered at $x$.  Since
$\overline{B_{R/2}(x)}\subset B_R(x)\subset D$, positivity and continuity of
$u$ place a small multiple of $v(\,\cdot-x)$ below $u$ on the closed smaller
ball; slide that multiple upward toward $1$.
A first contact must be interior, because $v=0<u$ on
$\partial B_{R/2}(x)$.  At such a contact, $u=\theta v\in(0,s]$ and
\[
 -\Delta(u-\theta v)
 =f(\theta v)-\theta\lambda_1(B_{R/2})v>0,
\]
contradicting the second-derivative test at an interior minimum.  The
contradiction gives $u(x)\geq v(0)=s$.  This comparison is entirely local; it
is the one used in \cite[Lemma 2.7]{BerestyckiGrahamStableCompact}, and no
boundary condition on $\partial D$ enters.
\end{proof}

\begin{proposition}[Uniform coalescence]\label{prop:coalescence}
One has
\begin{equation}\label{eq:coalescence}
 \lim_{\kappa\to\infty}
 \sup_{\varrho\in[0,1]}
 \sup_{u\in\cU_{\kappa,\varrho}}
 \|u-\Phi_{\kappa,\varrho}\|_{L^\infty(D_\kappa)}=0.
\end{equation}
The supremum over an empty solution set is understood as zero.
\end{proposition}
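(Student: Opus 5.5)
We argue by contradiction and compactness. If \eqref{eq:coalescence} fails, there are $\varepsilon>0$, $\kappa_n\to\infty$, $\varrho_n\in[0,1]$, solutions $u_n\in\cU_{\kappa_n,\varrho_n}$ and points $x_n\in D_n:=D_{\kappa_n}$ with
\[
 |u_n(x_n)-\Phi_{\kappa_n,\varrho_n}(x_n)|\geq\varepsilon .
\]
Passing to a subsequence, $\varrho_n\to\varrho_\infty\in[0,1]$. Recall that $0<u_n\leq1$ by the bounded maximum principle. Set $t_n:=d_{\kappa_n}(x_n)$; when $\Omega=\R^d$, or when $\partial\Omega\neq\varnothing$ but $t_n\to\infty$ along a subsequence, we treat the interior case. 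Otherwise $t_n$ stays bounded, and we treat the boundary case.

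\emph{Interior case.} Fix $s\in(0,1)$ with $1-s<\varepsilon/2$ and take $R=R(f,s)$ from Lemma~\ref{lem:interior-saturation}. Once $t_n>R$ we have $B_R(x_n)\subset D_n$, so $s\leq u_n(x_n)\leq1$. On the other side, \eqref{eq:uniform-tail} gives $1-\Phi_{\kappa_n,\varrho_n}(x_n)\leq 1-\phi_0(t_n)\to0$, and when $\Omega=\R^d$ we simply have $\Phi\equiv1$. Hence $|u_n(x_n)-\Phi_{\kappa_n,\varrho_n}(x_n)|<\varepsilon$ for large $n$, which is a contradiction.

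\emph{Boundary case.} Here $t_n\to t_\infty$. Pick a nearest point $p_n\in\partial D_n$, translate $p_n$ to the origin, and rotate so that the inner normal is $e_d$. Since $D_n=\kappa_n\Omega$ with $\Omega$ uniformly $C^{2,\gamma}$, the rescaled boundary graphs have $C^{2,\gamma}$ seminorms of order $\kappa_n^{-1}$. Consequently the transformed domains converge locally in $C^{2,\gamma'}$ to $\Hh^d$; only the component containing the limit boundary persists, because the uniform interior-ball condition dilated by $\kappa_n$ keeps the other sheets at distance $\to\infty$. Lemma~\ref{lem:joint-compactness}, which covers all $\varrho_\infty\in[0,1]$ including both endpoints, yields locally uniform convergence on closures to a solution $u_\infty$ of $-\Delta u_\infty=f(u_\infty)$ in $\Hh^d$ with $\cN_{\varrho_\infty}u_\infty=0$ and $0\leq u_\infty\leq1$. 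Moreover, Lemma~\ref{lem:interior-saturation} applied at distance $\geq R$ from the boundary survives the limit, so $u_\infty(x',y)\geq s$ for $y\geq R$. Thus $u_\infty\not\equiv0$, and $u_\infty\to1$ as $y\to\infty$ uniformly in $x'$.

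We then classify $u_\infty$. For $\varrho_\infty<1$ we invoke the half-space uniqueness \cite[Theorem~1.1(A)]{BerestyckiGrahamHalfSpace}, as in the proof of Proposition~\ref{prop:translation}, to get $u_\infty=\phi_{\varrho_\infty}(y)$. For $\varrho_\infty=1$ we cite the Neumann Liouville result \cite[Corollary~2.8]{RossiStability} to get $u_\infty\equiv1=\phi_1$. In either case $u_n(x_n)\to\phi_{\varrho_\infty}(t_\infty)$. In the chart the distance function satisfies $d_{\kappa_n}(x_n)=t_n$, and Proposition~\ref{prop:translation} gives $\phi_{\varrho_n}\to\phi_{\varrho_\infty}$ uniformly, so $\Phi_{\kappa_n,\varrho_n}(x_n)=\phi_{\varrho_n}(t_n)\to\phi_{\varrho_\infty}(t_\infty)$. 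This contradicts the choice of $\varepsilon$.

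The main obstacle is the boundary case at the Dirichlet endpoint, $\varrho_n\downarrow0$ with $\varrho_n>0$. There one needs convergence up to the boundary together with zero trace of the limit despite $\beta_n\to\infty$. This is exactly what the trace-vanishing part of Lemma~\ref{lem:joint-compactness} supplies, via the rescaling and Lemma~\ref{lem:harmonic-robin}, so the proposal reduces to invoking it. A secondary point is checking that the chart domains are uniformly $C^{2,\gamma}$ and converge to $\Hh^d$ for possibly unbounded $\Omega$; uniform smoothness of $\Omega$ ensures this.
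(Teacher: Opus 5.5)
Your proposal is correct and follows essentially the same route as the paper: contradiction, the interior/boundary dichotomy with Lemma~\ref{lem:interior-saturation} in the bulk, and a blow-up at the nearest boundary point handled by Lemma~\ref{lem:joint-compactness}, followed by the half-space classification and convergence of $\phi_{\varrho_n}(t_n)$. The deviations are only cosmetic. You get nontriviality of the limit by passing saturation to the limit, where the paper uses the explicit tangent interior ball. At the Neumann endpoint you cite \cite[Corollary~2.8]{RossiStability}, whereas the paper reflects evenly and reapplies Lemma~\ref{lem:interior-saturation}. Finally, in $d=1$ the paper uses Lemma~\ref{lem:halfline-classification} instead of the half-space theorem.
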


\begin{proof}
If $\Omega=\R^d$, Lemma~\ref{lem:interior-saturation} applied at every point
and for every $s<1$ gives $u\equiv1=\Phi_{\kappa,\varrho}$, so there is no
boundary layer to analyze.  Assume from now on that
$\partial\Omega\neq\varnothing$ and argue by contradiction.  If
\eqref{eq:coalescence} were false, then for some
$\varepsilon>0$ there
are
\[
 \kappa_n\to\infty,\qquad \varrho_n\in[0,1],\qquad
 u_n\in\cU_{\kappa_n,\varrho_n},\qquad x_n\in D_{\kappa_n}
\]
such that
\begin{equation}\label{eq:bad-point}
 |u_n(x_n)-\Phi_{\kappa_n,\varrho_n}(x_n)|\geq\varepsilon.
\end{equation}
Choose a subsequence for which $\varrho_n\to\varrho_*$.  Along a further
subsequence, either $d_{\kappa_n}(x_n)\to\infty$ or
$\{d_{\kappa_n}(x_n)\}_n$ is bounded.

If $d_{\kappa_n}(x_n)\to\infty$, fix any $s\in(0,1)$.  For all large $n$,
$d_{\kappa_n}(x_n)\geq R(f,s)$, so Lemma~\ref{lem:interior-saturation} gives
$u_n(x_n)\geq s$.  Since $s<1$ is arbitrary and $u_n\leq1$,
\[
 u_n(x_n)\longrightarrow1.
\]
On the other hand, \eqref{eq:uniform-tail} gives
\[
 \Phi_{\kappa_n,\varrho_n}(x_n)
 =\phi_{\varrho_n}(d_{\kappa_n}(x_n))\longrightarrow1,
\]
contradicting \eqref{eq:bad-point}.

Thus the bad points cannot escape into the bulk; their distance from the
boundary stays bounded.  Choose a nearest boundary point $p_n$.
Because $\partial D_{\kappa_n}$ is $C^1$, first variation of squared distance
along the boundary gives
\[
 x_n-p_n=d_{\kappa_n}(x_n)e_n,
\]
where $e_n$ is the inward unit normal at $p_n$.  Translate by $p_n$ and rotate
$e_n$ to $e_d$.  The uniform boundary-chart scale of $D_{\kappa_n}$ tends to
infinity while the rescaled $C^2$ graph norms tend to zero, so the domains
converge locally to $\Hh^d$.  Lemma~\ref{lem:joint-compactness} gives a locally
uniform limit $u_*$ solving the $\varrho_*$ problem on $\Hh^d$.

The limit is nonzero.  Choose $R_0>R(f,1/2)$, with $R(f,1/2)$ as in
Lemma~\ref{lem:interior-saturation}.  Uniform
$C^{2,\gamma}$ smoothness gives a uniform interior-ball radius
$r_{\rm in}>0$ for $\Omega$.  Hence each $p_n\in\partial D_{\kappa_n}$ admits
a tangent interior ball of radius
$L_n:=\kappa_n r_{\rm in}$ whose center is $p_n+L_ne_n$.  For all large $n$,
$L_n>R_0$.  Put $y_n:=p_n+R_0e_n$.  Then
\[
 B_{R_0}(y_n)\subset B_{L_n}(p_n+L_ne_n)\subset D_{\kappa_n},
\]
because for $z\in B_{R_0}(y_n)$,
$|z-(p_n+L_ne_n)|<R_0+(L_n-R_0)=L_n$, so
$d_{\kappa_n}(y_n)\geq R_0$; since $p_n\in\partial D_{\kappa_n}$ and
$|y_n-p_n|=R_0$, equality holds:
\[
 d_{\kappa_n}(y_n)=R_0.
\]
Lemma~\ref{lem:interior-saturation} gives $u_n(y_n)\geq1/2$.  In the
translated and rotated coordinates used above, $y_n$ is the fixed point
$R_0e_d$, so $u_*(R_0e_d)\geq1/2$.

When $\varrho_*<1$, Lemma~\ref{lem:halfline-classification} gives
$u_*=\phi_{\varrho_*}$ if $d=1$; if $d\geq2$, the half-space uniqueness theorem
\cite[Theorem 1.1(A)]{BerestyckiGrahamHalfSpace}, applied with its Robin
parameter $\alpha_*=\varrho_*/(1-\varrho_*)$, gives
$u_*(x',y)=\phi_{\varrho_*}(y)$.  To match the convention in that theorem, one
may replace $f$ outside $[0,1]$ by the monostable extension used there: both
$u_*$ and $\phi_{\varrho_*}$ take values in $[0,1]$, so neither the equation nor
the conclusion is changed.  At the Neumann endpoint
$\varrho_*=1$, evenly reflect $u_*$ across $\partial\Hh^d$.  The zero normal
derivative makes the reflected function a bounded nonzero weak solution of
$-\Delta u=f(u)$ on $\R^d$.  The strong maximum principle and elliptic
regularity make the reflected solution positive and classical.  Applying
Lemma~\ref{lem:interior-saturation} in $\R^d$ for every $s<1$ gives
$u_*\equiv1=\phi_1$.

Passing to a further subsequence, $d_{\kappa_n}(x_n)\to t\in[0,\infty)$.  In the
translated and rotated
coordinates, the identity $x_n-p_n=d_{\kappa_n}(x_n)e_n$ gives
$x_n\to te_d$.  Local uniform convergence and the preceding classification
then give
\[
 u_n(x_n)\longrightarrow\phi_{\varrho_*}(t).
\]
Also Proposition~\ref{prop:translation} gives
\[
 \Phi_{\kappa_n,\varrho_n}(x_n)
 =\phi_{\varrho_n}(d_{\kappa_n}(x_n))
 \longrightarrow\phi_{\varrho_*}(t).
\]
The two terms in \eqref{eq:bad-point} have the same limit
$\phi_{\varrho_*}(t)$, contradicting the fixed separation $\varepsilon$.
\end{proof}

\section{From half-space stability to large dilations}\label{sec:localization}

Set
\begin{equation}\label{eq:Lambda}
 \Lambda_{\kappa,\varrho}:=
 \lambda\bigl(-\Delta-f'(\Phi_{\kappa,\varrho}),
 D_\kappa,\varrho\bigr).
\end{equation}
The next step is to transfer the half-space gap to $D_\kappa$.  A
boundary-localized Rayleigh near-minimizer is pulled back through a flattening
chart and compared with the half-space form.  Formulating the comparison at
the level of quadratic forms also takes care of tangency between the
artificial spherical boundary and the physical boundary.

\begin{lemma}[Localized form transfer]
\label{lem:mixed-form}
Fix $R>0$.  After rigid motions, suppose that $D_n$ converges locally in
$C^1$ to $\Hh^d$ in the following precise sense.  There are
$z_n\to z$, an open set
$U\supset\overline{B_{R+1}(z)}$, and $C^1$ diffeomorphisms
$F_n:U\to F_n(U)$ such that
\begin{equation}\label{eq:flattening}
 \begin{gathered}
 F_n(U\cap\Hh^d)=F_n(U)\cap D_n,\qquad
 F_n(U\cap\partial\Hh^d)=F_n(U)\cap\partial D_n,\\
 \|F_n-\operatorname{Id}\|_{C^1(U)}\longrightarrow0.
 \end{gathered}
\end{equation}
Let $G_n$ be a connected component of $D_n\cap B_R(z_n)$.  On its
original boundary
\[
 \Gamma_n:=\partial G_n\cap\partial D_n\cap B_R(z_n)
\]
it carries $\varrho_n$ data, and on
$\partial G_n\setminus\Gamma_n$ it carries Dirichlet data.  In particular,
points of $\partial D_n\cap\partial B_R(z_n)$ are assigned to the artificial
boundary.  Suppose that $W\in C_b^{0,\gamma}(\R_+)$ and
\begin{equation}\label{eq:potential-chart-convergence}
 \|V_n\circ F_n-W(y)\|_{L^\infty(U\cap\Hh^d)}\longrightarrow0,
 \qquad \varrho_n\longrightarrow\varrho\in[0,1].
\end{equation}
Denote the resulting mixed principal eigenvalue of $-\Delta+V_n$ on
$G_n$ by $\lambda_n$.  Then
\begin{equation}
 \liminf_{n\to\infty}\lambda_n\geq
 \lambda(-\Delta+W(y),\Hh^d,\varrho).
 \label{eq:mixed-liminf}
\end{equation}
\end{lemma}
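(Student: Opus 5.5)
The plan is to pull near-minimizers back through the flattening charts and compare them with the half-space form. We argue by contradiction along a subsequence on which $\lambda_n\to\ell<\lambda(-\Delta+W,\Hh^d,\varrho)$; the case $\ell=-\infty$ is excluded since $\lambda_n\ge-\|V_n\|_\infty$, and the $V_n$ are uniformly bounded on the relevant set by \eqref{eq:potential-chart-convergence}. For each $n$, pick $\psi_n$ in the mixed form domain of $G_n$ with $\|\psi_n\|_{L^2(G_n)}=1$ and mixed quotient at most $\lambda_n+1/n$. Extend $\psi_n$ by zero to $D_n\setminus G_n$. Because the artificial boundary carries Dirichlet data, this extension lies in $H^1(D_n)$ (in $H^1_0(D_n)$ when $\varrho_n=0$) and has support in $\overline{B_R(z_n)}$. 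Its full Robin form on $D_n$ equals the mixed form on $G_n$, since the Robin boundary term only sees the trace on $\Gamma_n$. Working at the level of the form avoids the corner and tangency issues entirely.

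Next set $\tilde\psi_n:=\psi_n\circ F_n$ on $U\cap\Hh^d$ and extend it by zero. Since $z_n\to z$ and $\|F_n-\operatorname{Id}\|_{C^1}\to0$, for large $n$ the support of $\tilde\psi_n$ lies in $B_{R+1/2}(z)\cap\overline{\Hh^d}$, well inside $U$. Hence $\tilde\psi_n\in H^1(\Hh^d)$, with zero trace when $\varrho_n=0$. The change of variables has Jacobian $1+o(1)$ and metric coefficients $\delta_{ij}+o(1)$ uniformly. The boundary surface element on $\partial\Hh^d$ pulls back with factor $1+o(1)$, and by \eqref{eq:potential-chart-convergence} the potential term changes by $o(1)\|\tilde\psi_n\|_2^2$. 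Therefore
\[
 \int_{\Hh^d}\bigl(|\nabla\tilde\psi_n|^2+W\tilde\psi_n^2\bigr)
 +\beta_{\varrho_n}(1+o(1))\int_{\partial\Hh^d}\tilde\psi_n^2
 \le(1+o(1))(\lambda_n+1/n)+o(1),
\]
with $\|\tilde\psi_n\|_2^2=1+o(1)$. Because $\lambda_n$ is bounded, the gradient norms are uniformly bounded. This keeps all $o(1)$ errors multiplicative.

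To compare with the half-space, separate cases according to the limit $\varrho$. If $\varrho_n>0$, the factor $1+o(1)$ in front of the nonnegative boundary term can only help: we have $\beta_{\varrho_n}(1+o(1))\ge\beta_{\varrho_n}(1-o(1))$. So the quotient of $\tilde\psi_n$ for the half-space form with coefficient $\beta'_n:=\beta_{\varrho_n}(1-o(1))$ is at most $\lambda_n+o(1)$. By Lemma~\ref{lem:product} this quotient bounds $\nu_{\beta'_n}(W)$ from above, since $\lambda(-\Delta+W,\Hh^d,\cdot)$ equals the half-line bottom $\nu_\beta(W)$ of Lemma~\ref{lem:halfline-form-continuity}. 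As $\beta'_n\to\beta_\varrho$ in $[0,\infty]$, Lemma~\ref{lem:halfline-form-continuity} gives $\nu_{\beta'_n}(W)\to\nu_{\beta_\varrho}(W)$, including the Dirichlet endpoint $\beta=\infty$ and the Neumann endpoint $\beta=0$. Terms with $\varrho_n=0$ are compared directly with the Dirichlet half-space bottom. Taking limits gives $\ell\ge\lambda(-\Delta+W,\Hh^d,\varrho)$, which is the desired contradiction.

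I expect the main obstacle to be the Dirichlet endpoint, where $\beta_{\varrho_n}\to\infty$ and an additive error in the boundary term would be fatal. The multiplicative form of the surface-measure error, together with the continuity of $\nu_\beta$ at $\beta=\infty$ from Lemma~\ref{lem:halfline-form-continuity}, is what handles this. A secondary check is that the zero extension of $\psi_n$ is admissible near points of $\partial D_n\cap\partial B_R(z_n)$. This follows because the form domain of the mixed problem is defined as the $H^1$ closure of functions vanishing near the artificial boundary, so extension by zero is automatic.
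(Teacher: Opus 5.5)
Your proposal is correct and follows the paper's proof. Both arguments take near-minimizers on $G_n$, extend them by zero across the artificial boundary, pull them back through $F_n$ with Jacobians tending to one, apply the half-space Rayleigh principle, and then use Lemma~\ref{lem:product} and Lemma~\ref{lem:halfline-form-continuity} to pass from $\varrho_n$ to $\varrho$. The only difference is minor: you absorb the boundary-Jacobian error into a perturbed coefficient $\beta_{\varrho_n}(1-o(1))$ and invoke continuity of $\nu_\beta$ on $[0,\infty]$, whereas the paper bounds that error additively by $o(1)\,\beta_{\varrho_n}\int_{\Gamma_n}w_n^2=o(1)$ via the uniform Robin-energy bound, so an additive error is harmless rather than fatal.
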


\begin{proof}
After passing to a subsequence realizing a finite lower limit, and extracting
once more if necessary, arrange that either
$\varrho_n>0$ for every $n$ or $\varrho_n=0$ for every $n$.  Let $\mathcal H_n$ denote the mixed quadratic-form domain on $G_n$:
its elements vanish on the artificial boundary, while the original boundary
carries $\varrho_n$ data.  Take $w_n\in\mathcal H_n$ with
$\|w_n\|_{L^2(G_n)}=1$ and Rayleigh quotient at most $\lambda_n+n^{-1}$.
Since the potentials are uniformly bounded and the Robin boundary contribution
is nonnegative, these near-minimizers satisfy
\begin{equation}\label{eq:form-bound}
 \int_{G_n}|\nabla w_n|^2
 +\beta_{\varrho_n}
  \int_{\Gamma_n}w_n^2\leq C,
\end{equation}
with the second term omitted for Dirichlet indices.

For all large $n$, $\overline{B_R(z_n)}\subset F_n(U)$.  Extend $w_n$ by
zero from $G_n$ to $D_n\cap F_n(U)$.  By the definition of the mixed form
domain $\mathcal H_n$ this extension lies in $H^1$, even at the corner set.
Pull it back by $F_n$ and extend again by zero outside $U$; call the
resulting function $\widetilde w_n$ on $\Hh^d$.  Its support is contained in
$F_n^{-1}(\overline{B_R(z_n)})\Subset U$, so
\[
 \widetilde w_n\in H^1(\Hh^d)\quad\text{if }\varrho_n>0,
 \qquad
 \widetilde w_n\in H_0^1(\Hh^d)\quad\text{if }\varrho_n=0.
\]
Accordingly $\widetilde w_n$ belongs to the half-space form domain for
$\varrho_n$.

Let $J_n$, $A_n$, and $J_n^\partial$ be, respectively, the volume
Jacobian, the pullback coefficient matrix for the Dirichlet energy, and the
boundary Jacobian associated with $F_n$.  From \eqref{eq:flattening},
\begin{equation}\label{eq:jacobian-convergence}
 \|J_n-1\|_\infty+\|A_n-I\|_\infty
 +\|J_n^\partial-1\|_\infty\longrightarrow0
\end{equation}
on the fixed neighborhood containing the supports of
$\widetilde w_n$.  Change of variables, the normalization of $w_n$, and
\eqref{eq:potential-chart-convergence} give
\begin{equation}\label{eq:norm-transfer}
 \|\widetilde w_n\|_{L^2(\Hh^d)}^2=1+o(1).
\end{equation}
The near-minimality of $w_n$, together with \eqref{eq:form-bound} and
\eqref{eq:jacobian-convergence} show that
\begin{align}
 &\int_{\Hh^d}\bigl(|\nabla\widetilde w_n|^2
       +W(y)\widetilde w_n^2\bigr)
 +\beta_{\varrho_n}\int_{\partial\Hh^d}\widetilde w_n^2
 \notag\\
 &\qquad\qquad\leq\lambda_n+o(1)
 \label{eq:form-transfer}
\end{align}
when $\varrho_n>0$.  The boundary change of variables contributes at most
\[
 o(1)\,\beta_{\varrho_n}
 \int_{\Gamma_n}w_n^2=o(1)
\]
even when $\beta_{\varrho_n}\to\infty$, by
\eqref{eq:form-bound}.  For Dirichlet indices,
\eqref{eq:form-transfer} holds without the boundary integrals.

Applying the half-space Rayleigh principle to \eqref{eq:norm-transfer} and
\eqref{eq:form-transfer} gives
\[
 \lambda(-\Delta+W(y),\Hh^d,\varrho_n)
 \leq \lambda_n+o(1).
\]
By Lemma~\ref{lem:product}, these half-space eigenvalues are the
corresponding half-line form bottoms.  Lemma
\ref{lem:halfline-form-continuity} gives
\[
 \lambda(-\Delta+W(y),\Hh^d,\varrho_n)
 \longrightarrow\lambda(-\Delta+W(y),\Hh^d,\varrho),
\]
and \eqref{eq:mixed-liminf} follows.
\end{proof}

For the qualitative lower bound a fixed localization radius is enough.  The
two-sided estimate is more demanding because its radius grows with $\kappa$;
we therefore need half-space quasimodes whose support can be controlled
uniformly in $\varrho$.  Put
\[
 V_\varrho(y):=-f'(\phi_\varrho(y)),\qquad a_f:=-f'(1),
\]
and define the half-line form
\[
 q_\varrho[h]:=\int_0^\infty\bigl(|h'|^2+V_\varrho h^2\bigr)\,dy
 +\beta_\varrho |h(0)|^2
\]
for $\varrho>0$, with the boundary term omitted and zero trace imposed when
$\varrho=0$.  We also use the notation
\begin{align*}
 Q_\varrho(\Psi)
 &:=\int_{\Hh^d}\bigl(|\nabla\Psi|^2+V_\varrho(y)\Psi^2\bigr)
   +\beta_\varrho\int_{\partial\Hh^d}\Psi^2,\\
 Q_{\kappa,\varrho}(\zeta)
 &:=\int_{D_\kappa}\bigl(|\nabla\zeta|^2
        -f'(\Phi_{\kappa,\varrho})\zeta^2\bigr)
   +\beta_\varrho\int_{\partial D_\kappa}\zeta^2,
\end{align*}
with the boundary terms omitted and the form domains replaced by zero-trace
spaces when $\varrho=0$.  The endpoint assumptions,
\eqref{eq:uniform-tail}, and the H\"older continuity of $f'$ give constants
$C,c>0$ such that
\begin{equation}\label{eq:uniform-potential-tail}
 \sup_{\varrho\in[0,1]}|V_\varrho(y)-a_f|\leq Ce^{-cy}
 \qquad(y\geq0).
\end{equation}

\begin{lemma}[Uniform compactly supported half-space quasimodes]
\label{lem:uniform-quasimodes}
There are $R_0,C>0$ such that, for every $R\geq R_0$ and
$\varrho\in[0,1]$, one can find an $L^2$-normalized function
$\Psi_{\varrho,R}$ in the $\varrho$ form domain on $\Hh^d$ satisfying
\begin{equation}\label{eq:uniform-quasimode}
 \operatorname{supp}\Psi_{\varrho,R}\subset B_{3R}(0)\cap\overline{\Hh^d},
 \qquad
 Q_\varrho(\Psi_{\varrho,R})\leq\mu_\varrho+\frac{C}{R}.
\end{equation}
At the Dirichlet endpoint the quasimode has zero trace.
\end{lemma}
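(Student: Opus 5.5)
The plan is to build $\Psi_{\varrho,R}$ as a product of a tangential cutoff and a truncated half-line near-minimizer, with every constant uniform in $\varrho$.

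\emph{Normal factor.} For each $\varrho$ pick an $L^2(\R_+)$-normalized normal profile $h_\varrho$ in the $\varrho$ form domain such that $q_\varrho[h_\varrho]\le\mu_\varrho+R^{-1}$.

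\emph{Normal truncation.} Let $\theta_R(y)$ equal $1$ on $[0,R]$, vanish for $y\ge2R$, and satisfy $|\theta_R'|\le2/R$. Set $g=\theta_Rh_\varrho/\|\theta_Rh_\varrho\|_2$. Truncation does not change the trace at $y=0$, so $g$ stays in the form domain, with zero trace at $\varrho=0$. The IMS identity gives
\[
 q_\varrho[\theta_Rh]=\int\theta_R^2\bigl(|h'|^2+V_\varrho h^2\bigr)+\beta_\varrho|h(0)|^2+\int|\theta_R'|^2h^2 .
\]
To control the tail and the loss of mass, I need uniform exponential decay of the near-minimizers, which is the main obstacle. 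Two cases arise.
\begin{itemize}
 \item If $\mu_\varrho\le a_f-\delta$ for a fixed $\delta$, take $h_\varrho$ to be the true positive ground state, as in Proposition~\ref{prop:uniform-gap}. By \eqref{eq:uniform-potential-tail}, $V_\varrho-\mu_\varrho\ge\delta/2$ for $y\ge Y_0$, with $Y_0$ independent of $\varrho$. An Agmon/comparison argument then gives $h_\varrho(y)+|h_\varrho'(y)|\le Ce^{-c'y}$ uniformly in $\varrho$. Here I would use the normalization together with a uniform $H^1$ bound on $[0,Y_0+1]$ from the form bound $q_\varrho[h]\le a_f$ to control $h_\varrho(Y_0)$. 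The truncation errors are then $O(e^{-c'R})$.
 \item If $\mu_\varrho$ is close to $a_f$, or equals it (no eigenvalue), I would instead take $h$ to be a normalized bump of width $R$ centred at $y\sim R$, say $h(y)=R^{-1/2}\eta((y-R)/R)$ with $\eta\in C_c^\infty(0,1)$ normalized. It has zero trace, its kinetic energy is $O(R^{-2})$, and $\int V_\varrho h^2\le a_f+Ce^{-cR}$. So $q_\varrho[h]\le a_f+CR^{-2}\le\mu_\varrho+\delta+CR^{-2}$.
\end{itemize}
To avoid the $\delta$ loss in the second case, I would merge the two cases by compactness. $\mu_\varrho$ is continuous (Proposition~\ref{prop:uniform-gap}), so the set $\{\mu_\varrho\le a_f-\delta\}$ is compact. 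Alternatively I would prove directly a uniform bound
\[
 \int_R^\infty h_\varrho^2\le C\bigl(R^{-1}+(a_f-\mu_\varrho)\bigr)
\]
and choose $\delta=\delta(R)=R^{-1}$. Concretely: if $a_f-\mu_\varrho\le R^{-1}$, use the bump, which costs $O(R^{-2}+e^{-cR})\le C/R$. Otherwise the gap $V_\varrho-\mu_\varrho\gtrsim R^{-1}$ holds beyond $Y_0$, and decay at rate $\gtrsim R^{-1/2}$ is too weak. So I would rather truncate at scale comparable to $R$ using only the IMS term $\int|\theta_R'|^2h^2\le 4R^{-2}$. The remaining issue is the mass loss $\int_{R}^\infty h^2$. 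This is handled by choosing $R$ among dyadic scales, or by using the Lieb-type averaging of Theorem~\ref{thm:lieb} in one dimension: averaging translates of the cutoff shows some window loses at most $O(R^{-1})$ in the quotient. I expect this uniform-in-$\varrho$ tail control near the threshold $\mu_\varrho\approx a_f$ to be the delicate point. The one-dimensional averaging argument, which yields exactly the $C/R$ rate with no decay input, is what I would try first.

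\emph{Tangential factor.} Take $\eta_R(x')=R^{-(d-1)/2}\eta(x'/R)$ with $\eta\in C_c^\infty(B_1)$ and $\|\eta\|_2=1$. Its Dirichlet energy is $CR^{-2}$. Set $\Psi_{\varrho,R}(x',y)=\eta_R(x')g(y)$. Separation of variables, as in Lemma~\ref{lem:product}, gives
\[
 Q_\varrho(\Psi)=q_\varrho[g]+\|\nabla\eta_R\|_2^2\le\mu_\varrho+C/R ,
\]
and the support lies in $B_R\times[0,2R]\subset B_{3R}$. Taking $R_0$ large absorbs the remaining constants, uniformly in $\varrho\in[0,1]$. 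At $\varrho=1$ the potential is constant and the bump construction applies directly.
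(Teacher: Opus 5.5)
There is a genuine gap: the proposal never establishes the $C/R$ bound in the intermediate regime $R^{-1}\lesssim a_f-\mu_\varrho\le\delta$. That regime is the real content of the lemma, and the proposal ends with a list of options rather than an argument.

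\begin{itemize}
\item The fixed-$\delta$ split loses $\delta$, and compactness of $\{\varrho:\mu_\varrho\le a_f-\delta\}$ does not remove that loss.
\item Your proposed bound $\int_R^\infty h_\varrho^2\le C(R^{-1}+(a_f-\mu_\varrho))$ is not proved. It also has the wrong dependence: as $\mu_\varrho\uparrow a_f$ the ground state delocalizes, so the tail mass grows rather than shrinks.
\item The one-dimensional Lieb averaging does produce a window with quotient at most $\mu_\varrho+R^{-1}+CR^{-2}$. But it gives no control on where that window sits, so $\operatorname{supp}\Psi_{\varrho,R}\subset B_{3R}(0)$ is not verified. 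That support condition is needed later to fit the quasimode into a Fermi chart of size $6R$.
\end{itemize}

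The averaging route can be completed. Use windows of half-length $R/2$. A window contained in $[R,\infty)$ has quotient at least $a_f-Ce^{-cR}$ by the uniform potential tail, which forces $a_f-\mu_\varrho\le C/R$; in that case your bump already works. Otherwise the window lies in $[0,2R]$. This would give a decay-free alternative to the paper's proof, but that step is missing from what you wrote.

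The underlying misdiagnosis is your claim that decay at rate $\gtrsim R^{-1/2}$ is too weak and that the lost mass must be $O(R^{-1})$. Your displayed IMS formula is not an identity for a general near-minimizer, since it drops the cross term $2\int\theta_R\theta_R'hh'$. When $h=z_\varrho$ is the true ground state, however, the exact identity $q_\varrho[\theta_Rz_\varrho]=\mu_\varrho\|\theta_Rz_\varrho\|_2^2+\int|\theta_R'|^2z_\varrho^2$ holds. The lost mass then enters only through the denominator, so one needs only $\|\theta_Rz_\varrho\|_2^2\ge1/2$.

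This is the paper's route:
\begin{itemize}
\item It splits at $a_f-\mu_\varrho=2R^{-1}$ and uses the bump below the split.
\item Above the split, $z_\varrho''\ge k_\varrho^2z_\varrho$ with $k_\varrho=\sqrt{3\delta_\varrho/4}$ on $[Y_{\delta_\varrho},\infty)$. Here $Y_\delta=C_1\log(2+\delta^{-1})=O(\log R)$ grows only logarithmically, and the prefactor is uniform thanks to the uniform $H^1$ bound.
\item Since $\sqrt{\delta_\varrho}\,R\ge\sqrt{2R}$, the tail mass beyond $R$ is at most $CR^{1/2}e^{-c\sqrt R}$.
\end{itemize}
That bound is far below $R^{-1}$. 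So the decay you discarded closes the argument, even under your own (unnecessary) requirement on the lost mass.
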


\begin{proof}
On the half-line, set
$\delta_\varrho:=a_f-\mu_\varrho\geq0$ and fix
$\eta\in C_c^\infty(0,1)$ with $\|\eta\|_2=1$.  There are two cases.  When the spectral bottom lies within $O(R^{-1})$ of
the essential-spectrum edge, we place a bump far out where the potential is
nearly constant.  Away from that edge, the $L^2$ ground state decays fast
enough to truncate uniformly.

If $\delta_\varrho\leq2R^{-1}$, choose $Y_R=C_0\log R$, where $C_0$ is
large enough that \eqref{eq:uniform-potential-tail} is at most $R^{-2}$ on
$[Y_R,\infty)$.  For large $R$ one has $Y_R<R$, and
\[
 h_{\varrho,R}(y):=R^{-1/2}\eta\!\left(\frac{y-Y_R}{R}\right)
\]
has zero trace, unit norm, and support in $[0,2R]$.  It is admissible for
every boundary parameter, and
\[
 q_\varrho[h_{\varrho,R}]
 \leq a_f+CR^{-2}
 \leq\mu_\varrho+2R^{-1}+CR^{-2}.
\]

When $\delta_\varrho>2R^{-1}$ the bottom of the spectrum is separated from
$a_f$, so an $L^2$ ground state is available.  Let $z_\varrho$ denote its positive $L^2$-normalized ground state.  Its $H^1$ norm is bounded independently of
$\varrho$: test the eigenvalue equation by $z_\varrho$, use
$\mu_\varrho\leq a_f$, the uniform bound on $V_\varrho$, and the
nonnegative Robin boundary term.  If
\[
 Y_\delta=C_1\log(2+\delta^{-1}),
\]
then \eqref{eq:uniform-potential-tail} and a larger fixed $C_1$ give
$V_\varrho-a_f\geq-\delta/4$ on $[Y_\delta,\infty)$.  With
$\delta=\delta_\varrho$ and
$k_\varrho:=\sqrt{3\delta_\varrho/4}$, the eigenvalue equation gives
\[
 z_\varrho''=(V_\varrho-a_f+\delta_\varrho)z_\varrho
 \geq k_\varrho^2z_\varrho
 \qquad\text{on }[Y_{\delta_\varrho},\infty).
\]
Since the potential is bounded, $z_\varrho\in H^2(\R_+)$ and
$z_\varrho,z_\varrho'\to0$ at infinity.  Set
$F_\varrho=z_\varrho'+k_\varrho z_\varrho$.  The preceding inequality yields
$F_\varrho'\geq k_\varrho F_\varrho$.  If $F_\varrho(y_0)>0$ at one point,
then $F_\varrho(y)\geq F_\varrho(y_0)e^{k_\varrho(y-y_0)}$ for every
$y\geq y_0$, contradicting $F_\varrho(y)\to0$.  The inequality therefore
forces $z_\varrho'+k_\varrho z_\varrho\leq0$, and integration gives
\begin{equation}\label{eq:ground-state-tail}
 |z_\varrho(y)|\leq
 Ce^{-\sqrt{3\delta_\varrho/4}\,(y-Y_{\delta_\varrho})}
 \qquad(y\geq Y_{\delta_\varrho}).
\end{equation}
The prefactor is uniform by the preceding $H^1$ bound and the one-dimensional
trace inequality.  Since $\delta_\varrho>2R^{-1}$,
$Y_{\delta_\varrho}=O(\log R)$ and
$\sqrt{\delta_\varrho}\,R\geq\sqrt{2R}$.  For all sufficiently large
$R$, uniformly in this regime, $R-Y_{\delta_\varrho}\geq R/2$, and
\begin{equation}\label{eq:ground-state-tail-mass}
 \int_R^\infty z_\varrho^2
 \leq C\delta_\varrho^{-1/2}
 e^{-c\sqrt{\delta_\varrho}(R-Y_{\delta_\varrho})}
 \leq CR^{1/2}e^{-c\sqrt R}=o(1)
\end{equation}
uniformly in this case.

Let $\theta_R$ equal one on $[0,R]$, vanish on $[2R,\infty)$, and satisfy
$|\theta_R'|\leq C/R$.  Testing the eigenvalue equation by
$\theta_R^2z_\varrho$ gives the exact IMS identity
\[
 q_\varrho[\theta_Rz_\varrho]
 =\mu_\varrho\|\theta_Rz_\varrho\|_2^2
  +\int_0^\infty|\theta_R'|^2z_\varrho^2.
\]
By \eqref{eq:ground-state-tail-mass}, the denominator is at least $1/2$ for
all large $R$, uniformly in $\varrho$.  After normalization this produces a
half-line test function supported in $[0,2R]$ with Rayleigh quotient at most
$\mu_\varrho+CR^{-2}$.  The first regime already had an $O(R^{-1})$ error, so the two cases give a
uniform $C/R$ bound on the half-line.

For $d\geq2$, multiply the half-line function by an $L^2$-normalized
tangential cutoff of radius $R$ and Dirichlet energy $O(R^{-2})$; for $d=1$
there is no tangential factor.  Lemma~\ref{lem:product} gives
\eqref{eq:uniform-quasimode}.
\end{proof}

\begin{lemma}[Growing-scale Fermi form comparison]
\label{lem:growing-fermi}
Assume $\partial\Omega\neq\varnothing$.  There are $c_0,C,c>0$ such that,
whenever $1\leq R\leq c_0\kappa$, the following statements hold uniformly in
$\varrho\in[0,1]$.
\begin{enumerate}[label=(\roman*)]
\item If $G$ is a connected component of $D_\kappa\cap B_R(z)$, with the
original boundary carrying $\varrho$ data and the artificial boundary carrying
Dirichlet data, then
\begin{equation}\label{eq:growing-fermi-lower}
 \lambda\bigl(-\Delta-f'(\Phi_{\kappa,\varrho}),G,\varrho\bigr)
 \geq\mu_\varrho-C\left(\frac{R}{\kappa}+e^{-cR}\right).
\end{equation}
\item For every $p\in\partial\Omega$ and every half-space form function
$\Psi$ supported in $B_{3R}\cap\overline{\Hh^d}$ with, for some $C_1\geq1$,
\[
 \|\Psi\|_2=1,
 \qquad
 \int_{\Hh^d}|\nabla\Psi|^2
 +\beta_\varrho\int_{\partial\Hh^d}\Psi^2\leq C_1,
\]
where the boundary term is omitted at $\varrho=0$, there is an admissible
function $T_{\kappa,p}\Psi$ on $D_\kappa$ such that
\begin{equation}\label{eq:growing-fermi-upper}
 \frac{Q_{\kappa,\varrho}(T_{\kappa,p}\Psi)}
      {\|T_{\kappa,p}\Psi\|_2^2}
 \leq Q_\varrho(\Psi)+CC_1\frac{R}{\kappa}.
\end{equation}
\end{enumerate}
\end{lemma}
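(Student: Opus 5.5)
Both parts will be proved by a quantitative version of the chart comparison in Lemma~\ref{lem:mixed-form}. The flattening error is now tracked explicitly instead of as $o(1)$. The basic tool is Fermi (normal) coordinates. Uniform $C^{2,\gamma}$ smoothness of $\Omega$ gives a tubular radius $r_0>0$ and a curvature bound $K$. Hence $\partial D_\kappa$ has tubular radius $\kappa r_0$ and curvature at most $K/\kappa$.

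For $p\in\partial D_\kappa$, write $x=\pi(x')+y\,e(x')$, with $x'$ a tangential graph coordinate on $\partial D_\kappa$ near $p$, $\pi(x')$ the corresponding boundary point, and $e$ the inward normal. This defines a diffeomorphism $F_{\kappa,p}$ from $B_{4R}\cap\overline{\Hh^d}$ onto a neighbourhood of $p$, as long as $4R\le c_0\kappa$. In this chart $d_\kappa(F(x',y))=y$ exactly, so $\Phi_{\kappa,\varrho}\circ F=\phi_\varrho(y)$. The potential therefore transfers \emph{without error}, which is the main payoff of Fermi coordinates. The only errors come from the metric, and on the chart we have
\[
 \|J-1\|_\infty+\|A-I\|_\infty+\|J^\partial-1\|_\infty\le C R/\kappa,
\]
with $C$ depending only on $K$. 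Indeed the second fundamental form has size $K/\kappa$, so the metric deviates by $O(y/\kappa+|x'|^2/\kappa)$. I would choose the tangential coordinates as normal coordinates on $\partial D_\kappa$ (or as a graph with vanishing gradient at $p$) so that the deviation is $O(R/\kappa)$ rather than $O(R^2/\kappa^2)$ or worse. On supports of size $O(R)$, either choice is bounded by $CR/\kappa$ because $R\le c_0\kappa$.

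For (ii), set $T_{\kappa,p}\Psi:=\Psi\circ F_{\kappa,p}^{-1}$, extended by zero. This function is admissible: Robin data carry over, and zero trace is preserved at $\varrho=0$, since $F$ maps $\partial\Hh^d$ to $\partial D_\kappa$. A change of variables gives
\[
 \bigl|Q_{\kappa,\varrho}(T\Psi)-Q_\varrho(\Psi)\bigr|\le C\frac R\kappa\Bigl(\int|\nabla\Psi|^2+\|V_\varrho\|_\infty\|\Psi\|_2^2+\beta_\varrho\int_{\partial\Hh^d}\Psi^2\Bigr)\le CC_1\frac R\kappa,
\]
and $\bigl|\|T\Psi\|_2^2-1\bigr|\le CR/\kappa$. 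The boundary Jacobian multiplies the full Robin term, and that term is bounded by $C_1$. This is how $\beta_\varrho\to\infty$ is handled uniformly, exactly as in Lemma~\ref{lem:mixed-form}. Dividing, and using $Q_\varrho(\Psi)\le C C_1$, yields \eqref{eq:growing-fermi-upper} after enlarging $C$.

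For (i), split according to the distance from $z$ to $\partial D_\kappa$. If $\dist(z,\partial D_\kappa)\ge 2R$, then $G=B_R(z)$ lies at distance at least $R$ from the boundary. There $|f'(\Phi)+a_f|\le Ce^{-cR}$ by \eqref{eq:uniform-potential-tail}, so the Dirichlet bottom is at least
\[
 a_f-Ce^{-cR}\ge\mu_\varrho-Ce^{-cR},
\]
using $\mu_\varrho\le a_f$ from the proof of Proposition~\ref{prop:uniform-gap}. Otherwise pick $p\in\partial D_\kappa$ with $|p-z|<2R$, so that $G\subset F_{\kappa,p}(B_{4R}\cap\overline{\Hh^d})$. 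Take a near-minimizer $w$ of the mixed form on $G$, extend it by zero (admissible, as in Lemma~\ref{lem:mixed-form}), and pull it back. Its energy and $\beta_\varrho$-weighted trace are bounded by $C(1+\lambda_G)$; if $\lambda_G\ge a_f$ there is nothing to prove. The same Jacobian estimate, run in reverse, then gives
\[
 \mu_\varrho=\lambda(-\Delta+V_\varrho,\Hh^d,\varrho)\le\lambda_G+CR/\kappa
\]
by Lemma~\ref{lem:product}.

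The main obstacle is the geometric bookkeeping. First, the Fermi map must be a diffeomorphism on a ball of radius $4R$ up to $R=c_0\kappa$, uniformly over $p$; this holds for unbounded $\Omega$ because the interior/exterior ball and graph radii are uniform. Second, $G$ must genuinely lie inside the chart image, so that $\Gamma$ corresponds to part of $\partial\Hh^d$ and no other sheet of $\partial D_\kappa$ enters $B_R(z)$. I would secure both by taking $c_0$ small relative to $r_0$ and the uniform graph radius, and then verify the $O(R/\kappa)$ metric bound from the $C^2$ bounds on the scaled graph functions $\kappa h(x'/\kappa)$.
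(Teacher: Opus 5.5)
Your proposal is correct and follows the paper's proof essentially step for step. The shared steps are: Fermi coordinates with $d_\kappa\circ F=y$, so the potential transfers exactly; $O(R/\kappa)$ Jacobian and metric errors, with the boundary Jacobian multiplying the full Robin energy so that the bound stays uniform as $\beta_\varrho\to\infty$; pushforward and division for (ii); and for (i), a far-from-boundary case handled by the exponential potential tail together with $\mu_\varrho\le a_f$, plus a near-boundary case handled by pulling back a near-minimizer and applying the half-space Rayleigh principle via Lemma~\ref{lem:product}.

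One small slip: in graph coordinates the pulled-back metric deviates by $O(y/\kappa+|x'|^2/\kappa^2)$, and $DF-I=O((|x'|+y)/\kappa)$. It is not $O(y/\kappa+|x'|^2/\kappa)$, which would be too weak at $R\sim\sqrt\kappa$. Your stated bound $CR/\kappa$, and your plan to verify it from the scaled graph functions, are nonetheless right.
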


\begin{proof}
Fix $p\in\partial\Omega$, translate $\kappa p$ to the origin, and rotate the
inward normal to $e_d$.  Shrinking $c_0$ once, depending only on the uniform
boundary-chart and normal-injectivity data of $\Omega$, ensures that whenever
$R\leq c_0\kappa$ the normal coordinates around $\kappa p$ are injective
throughout the portion of the tubular neighborhood needed below.  In
particular their coordinate domain contains $B_{6R}\cap\Hh^d$, and their
image contains $D_\kappa\cap B_{5R}(0)$.  Write these coordinates as
\[
 F_{\kappa,p}(x',y)=b_{\kappa,p}(x')+y\,n_{\kappa,p}(x').
\]
Scaling the fixed boundary charts gives, on the part of the coordinate domain
with $|(x',y)|\leq6R$,
\begin{equation}\label{eq:growing-fermi-geometry}
 d_\kappa(F_{\kappa,p}(x',y))=y,\qquad
 \|DF_{\kappa,p}-I\|_\infty
 +\|J_{\kappa,p}-1\|_\infty
 +\|J^\partial_{\kappa,p}-1\|_\infty
 \leq C\frac{R}{\kappa}.
\end{equation}
The maps are uniformly bi-Lipschitz, and
\begin{equation}\label{eq:exact-potential-transfer}
 -f'(\Phi_{\kappa,\varrho})\circ F_{\kappa,p}=V_\varrho(y).
\end{equation}
Thus the Fermi change of variables introduces no error in the model
potential.  The $R/\kappa$ loss below comes only from the metric and Jacobian
terms.  The change-of-variable calculation from
Lemma~\ref{lem:mixed-form}, now retaining its scale dependence, gives for
functions supported in the chart
\[
 \|T_{\kappa,p}\Psi\|_2^2
 =(1+O(R/\kappa))\|\Psi\|_2^2
\]
and
\[
 |Q_{\kappa,\varrho}(T_{\kappa,p}\Psi)-Q_\varrho(\Psi)|
 \leq C\frac{R}{\kappa}
 \left(\|\Psi\|_{H^1}^2
 +\beta_\varrho\|\Psi\|_{L^2(\partial\Hh^d)}^2\right),
\]
with the boundary term omitted at $\varrho=0$.  The inverse change of variables satisfies the same estimate.  Since the
boundary-Jacobian error multiplies the Robin energy, rather than an uncontrolled
trace norm, the constant is uniform as $\beta_\varrho\to\infty$.  Under the assumptions in (ii), the
right-hand side is $O(C_1R/\kappa)$ and the transformed norm stays away from
zero; division by that norm proves (ii).

For (i), if $\dist(G,\partial D_\kappa)>R$, then
\eqref{eq:uniform-potential-tail} immediately gives
\[
 \lambda\bigl(-\Delta-f'(\Phi_{\kappa,\varrho}),G,\varrho\bigr)
 \geq a_f-Ce^{-cR}\geq\mu_\varrho-Ce^{-cR}.
\]
Otherwise choose $x\in G$ with $d_\kappa(x)\leq R+1$ and a nearest boundary
point $P$.  Since $R\geq1$ and $x\in B_R(z)$, every point of $G$ lies in
$B_{4R}(P)$.  Recenter the preceding Fermi construction at $P$; by the choice
of $c_0$ its image contains the whole component $G$.  Let $w$ be an
$L^2$-normalized $\varepsilon$-minimizer, extended by zero across the
artificial boundary and pulled back to $\Hh^d$, as in
Lemma~\ref{lem:mixed-form}.  The zero extension is legitimate because the
mixed form domain has zero trace on the artificial boundary.  Choose
$K>1+\sup_\varrho\mu_\varrho$.  If the mixed form bottom $\lambda_G\geq K$,
the claim is immediate after increasing $C$; otherwise near-minimality and
the bounded potential give a uniform bound on the gradient plus Robin
energy of $w$.  The inverse comparison above yields
\[
 \|\widetilde w\|_2^2=1+O(R/\kappa),\qquad
 Q_\varrho(\widetilde w)
 \leq\lambda_G+\varepsilon+C\frac{R}{\kappa}.
\]
The half-space Rayleigh principle gives
$\mu_\varrho\leq\lambda_G+\varepsilon+CR/\kappa$.
Letting $\varepsilon\downarrow0$ proves (i).
\end{proof}

\begin{proposition}[Uniform spectral lower bound]\label{prop:large-gap}
For $\mu_\varrho$ from \eqref{eq:mu-rho},
\begin{equation}\label{eq:large-gap-liminf}
 \liminf_{\kappa\to\infty}\inf_{\varrho\in[0,1]}
 \bigl(\Lambda_{\kappa,\varrho}-\mu_\varrho\bigr)\geq0.
\end{equation}
As a consequence, with $\mu_*$ from \eqref{eq:uniform-gap}, there is
$\kappa_1(f,\Omega)$ such that
\begin{equation}\label{eq:large-gap}
 \Lambda_{\kappa,\varrho}\geq\frac{\mu_*}{2}
 \qquad(\kappa\geq\kappa_1,\ \varrho\in[0,1]).
\end{equation}
\end{proposition}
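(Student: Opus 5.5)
The plan is to combine the potential-bearing Robin--Lieb localization (Theorem~\ref{thm:lieb}) with the growing-scale Fermi comparison of Lemma~\ref{lem:growing-fermi}(i). The case $\Omega=\R^d$ is trivial: there $\Phi_{\kappa,\varrho}\equiv1$ and $\Lambda_{\kappa,\varrho}=\lambda(-\Delta+a_f,\R^d)=a_f\geq\mu_\varrho$, since $\mu_\varrho\leq a_f$ by translated test functions. So assume $\partial\Omega\neq\varnothing$.

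Fix $\kappa$ and choose a localization radius $R=R(\kappa)$ with $1\leq R\leq c_0\kappa$, to be optimized later. Theorem~\ref{thm:lieb}, applied with $V=-f'(\Phi_{\kappa,\varrho})\in L^\infty$, gives
\[
 \Lambda_{\kappa,\varrho}\geq\inf_{z}\lambda\bigl(-\Delta-f'(\Phi_{\kappa,\varrho}),D_\kappa\mid B_R(z),\varrho\bigr)-\lambda_1(B_1)R^{-2}.
\]
By the convention fixed after Lemma~\ref{lem:bounded-weak-maximum}, each localized bottom is the infimum over connected components $G$ of $D_\kappa\cap B_R(z)$, and empty intersections contribute $+\infty$. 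The mixed boundary assignment used in Lemma~\ref{lem:growing-fermi}(i) matches the one used in the localization. That lemma therefore bounds every component from below by $\mu_\varrho-C(R/\kappa+e^{-cR})$, with constants uniform in $z$ and $\varrho$. Combining the two bounds,
\[
 \Lambda_{\kappa,\varrho}-\mu_\varrho\geq-C\Bigl(\frac R\kappa+e^{-cR}+R^{-2}\Bigr)
\]
uniformly in $\varrho\in[0,1]$.

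Taking $R=\kappa^{1/3}$ (any choice with $R\to\infty$ and $R/\kappa\to0$ works) makes the right-hand side $O(\kappa^{-1/3})\to0$, which proves \eqref{eq:large-gap-liminf}. Then \eqref{eq:large-gap} follows from Proposition~\ref{prop:uniform-gap}: choose $\kappa_1$ so that the error is at most $\mu_*/2$, giving $\Lambda_{\kappa,\varrho}\geq\mu_\varrho-\mu_*/2\geq\mu_*/2$.

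The only point that needs care is the hypotheses of Theorem~\ref{thm:lieb}. It is stated for a uniformly $C^{2,\gamma}$ domain, and $D_\kappa$ is even smoother for large $\kappa$. Its mixed eigenvalue must be interpreted as a component infimum, consistently with how Lemma~\ref{lem:growing-fermi}(i) is stated.

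If one instead wanted to avoid the growing-scale lemma, the fallback is a contradiction argument at fixed $R$. Take sequences $\kappa_n\to\infty$, $\varrho_n\to\varrho$ and centers $z_n$, recenter at nearby boundary points, and pass to the limit with Lemma~\ref{lem:mixed-form}, using $L^\infty$ continuity of $V_\varrho$ in $\varrho$. This gives $\liminf\geq\mu_\varrho-\lambda_1(B_1)R^{-2}$; letting $R\to\infty$ and using continuity of $\mu$ then gives the result. The main obstacle in that route is verifying the chart convergence \eqref{eq:potential-chart-convergence} when the center sits at distance $O(R)$ from the boundary. Centers far from the boundary are handled instead by the tail bound \eqref{eq:uniform-potential-tail}.
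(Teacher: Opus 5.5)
Your main argument is correct, but it is not how the paper proves this proposition. What you wrote is exactly the lower-bound half of the proof of Theorem~\ref{thm:spectral-convergence} (see \eqref{eq:quantitative-large-lower}), run with $R=\kappa^{1/3}$ instead of $R=\sqrt\kappa$.

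\textbf{The paper's route.} The paper argues by contradiction at a fixed radius $R$ with $\lambda_1(B_1)R^{-2}<\varepsilon$. After the Robin--Lieb step it picks a bad component $G_n$ and splits into two cases according to $m_n=\dist(G_n,\partial D_{\kappa_n})$.
\begin{enumerate}[label=(\alph*)]
 \item If $m_n\to\infty$, the component is purely Dirichlet and its potential is uniformly close to $a_f\geq\mu_\varrho$.
 \item If $m_n$ stays bounded, fixed-size Fermi flattenings converge to the identity in $C^1$, and Lemma~\ref{lem:mixed-form} gives $\liminf\lambda_n\geq\mu_{\varrho_*}$.
\end{enumerate}
This is essentially your fallback. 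The obstacle you flag there disappears: for fixed $R$, a center at distance $O(R)$ from the boundary is at bounded distance, so case (b) covers it directly.

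\textbf{What each route buys.} Yours is shorter given Lemma~\ref{lem:growing-fermi}, and it yields a rate. However, it relies on the scale-dependent Fermi comparison on balls of growing radius. That requires normal injectivity at scale $c_0\kappa$, $O(R/\kappa)$ control of the metric and Jacobians, and the exponential tail \eqref{eq:uniform-potential-tail}. The paper deliberately keeps this out of the uniqueness theorem (Remark~\ref{rem:minimal-spectral-input}). Its fixed-radius argument needs only local $C^1$ chart convergence and $L^\infty$-continuity of $V_\varrho$ in $\varrho$. This is also the form in which the argument is reused for compact reaction families in Theorem~\ref{thm:reaction-family} (``repeating the two cases'').

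\textbf{A small slip.} With $R=\kappa^{1/3}$ your error is $C(\kappa^{-2/3}+e^{-c\kappa^{1/3}})=O(\kappa^{-2/3})$, not merely $O(\kappa^{-1/3})$. So for the lower bound alone, your choice actually improves on the $\kappa^{-1/2}$ in \eqref{eq:quantitative-large-lower}. There, $R=\sqrt\kappa$ was dictated by balancing against the upper bound.
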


\begin{proof}
The whole-space case is immediate from
$\Lambda_{\kappa,\varrho}=|f'(1)|\geq\mu_\varrho$, so take
$\partial\Omega\neq\varnothing$.  If \eqref{eq:large-gap-liminf} were false,
then for some fixed $\varepsilon>0$ (reduced once if needed) there would be
\[
 \kappa_n\to\infty,\qquad \varrho_n\in[0,1],
\]
such that
\begin{equation}\label{eq:fixed-R-contradiction}
 \Lambda_{\kappa_n,\varrho_n}
 \leq \mu_{\varrho_n}-5\varepsilon.
\end{equation}
Passing to a subsequence, $\varrho_n\to\varrho_*\in[0,1]$; Proposition
\ref{prop:uniform-gap} gives $\mu_{\varrho_n}\to\mu_{\varrho_*}$.

Choose once and for all $R>0$ so large that
\[
 \lambda_1(B_1)R^{-2}<\varepsilon.
\]
The Robin--Lieb inequality \eqref{eq:lieb} implies
\[
 \inf_{z\in\R^d}
 \lambda\bigl(-\Delta-f'(\Phi_{\kappa_n,\varrho_n}),
 D_{\kappa_n}\mid B_R(z),\varrho_n\bigr)
 \leq \mu_{\varrho_n}-4\varepsilon.
\]
Choose $z_n$ so that the localized form bottom is at most
$\mu_{\varrho_n}-3\varepsilon$.  If
$D_{\kappa_n}\cap B_R(z_n)$ is disconnected, its form bottom is the infimum
of the component bottoms.  We may choose a connected component $G_n$ whose
mixed Robin--Dirichlet principal eigenvalue $\lambda_n$ satisfies
\begin{equation}\label{eq:fixed-R-local-low}
 \lambda_n\leq\mu_{\varrho_n}-2\varepsilon.
\end{equation}
Set
\[
 m_n:=\dist(G_n,\partial D_{\kappa_n}).
\]
After passing to a subsequence, either $m_n\to\infty$ or $(m_n)$ remains
bounded.  These are genuinely different local pictures: the first localizer
drifts into the bulk, while the second continues to see a boundary half-space.
In the first case $G_n$ eventually misses the original boundary, so its boundary condition is purely Dirichlet,
and $d_{\kappa_n}(x)\geq m_n$ on $G_n$.  The uniform profile tail \eqref{eq:uniform-tail}, together with the uniform
continuity of $f'$, gives
\[
 \sup_{x\in G_n}
 \bigl|-f'(\Phi_{\kappa_n,\varrho_n}(x))+f'(1)\bigr|\longrightarrow0.
\]
Writing $a_f:=-f'(1)$ and using the nonnegativity of the Dirichlet energy
gives
\[
 \liminf_{n\to\infty}\lambda_n\geq a_f.
\]
On the other hand, test functions translated to infinity in the half-line
problem give $\mu_\varrho\leq a_f$ for every $\varrho$; cf. the proof of
Proposition~\ref{prop:uniform-gap}.  By \eqref{eq:fixed-R-local-low} and
$\mu_{\varrho_n}\to\mu_{\varrho_*}$,
\[
 \limsup_{n\to\infty}\lambda_n
 \leq \mu_{\varrho_*}-2\varepsilon
 \leq a_f-2\varepsilon,
\]
contradicting the preceding lower bound.

After passing to a subsequence, take $M$ with $m_n\leq M$.  Choose
$x_n\in G_n$ with $d_{\kappa_n}(x_n)\leq M+o(1)$ and let
$p_n\in\partial D_{\kappa_n}$ be a nearest boundary point.  Since
$x_n\in B_R(z_n)$,
\[
 |z_n-p_n|\leq R+M+o(1).
\]
Translate $p_n$ to the origin and rotate the inward normal there to $e_d$.
Taking a further subsequence if needed, the transformed centers converge,
say $z_n\to z$.  The uniform $C^{2,\gamma}$ geometry of $\Omega$, after dilation, supplies Fermi
flattenings $F_n$ on a fixed neighborhood of
$\overline{B_{R+1}(z)}$ such that
\[
 F_n(\Hh^d)=D_{\kappa_n}\quad\hbox{locally},\qquad
 \|F_n-\operatorname{Id}\|_{C^1}\longrightarrow0,
\]
and
\begin{equation}\label{eq:fixed-R-distance-coordinate}
 d_{\kappa_n}(F_n(x',y))=y.
\end{equation}
In these coordinates,
\[
 -f'(\Phi_{\kappa_n,\varrho_n})\circ F_n
 =-f'(\phi_{\varrho_n}(y))
 \longrightarrow -f'(\phi_{\varrho_*}(y))
\]
uniformly on that fixed neighborhood, by Proposition~\ref{prop:translation}
and the uniform continuity of $f'$.  Applying Lemma~\ref{lem:mixed-form} to
the chosen components $G_n$ yields
\[
 \liminf_{n\to\infty}\lambda_n
 \geq \lambda\bigl(-\Delta-f'(\phi_{\varrho_*}(y)),
                    \Hh^d,\varrho_*\bigr)
 =\mu_{\varrho_*},
\]
where the last equality is Lemma~\ref{lem:product}.  Meanwhile,
\eqref{eq:fixed-R-local-low} and continuity of $\mu_\varrho$ give
\[
 \limsup_{n\to\infty}\lambda_n
 \leq\mu_{\varrho_*}-2\varepsilon,
\]
again impossible.  This contradiction gives \eqref{eq:large-gap-liminf};
the uniform positive lower bound $\mu_\varrho\geq\mu_*>0$ then yields
\eqref{eq:large-gap}.

\end{proof}

\begin{remark}[Qualitative gap versus quantitative convergence]
\label{rem:minimal-spectral-input}
Proposition~\ref{prop:large-gap} is the only large-domain spectral input
needed for Theorem~\ref{thm:main}: it uses a fixed localization radius and the
qualitative form-transfer lemma.  The growing-scale estimates below are used
only for the stronger quantitative spectral convergence and subsequent
refinements, not for the parameter-uniform uniqueness theorem.
\end{remark}

\begin{theorem}[Quantitative uniform spectral convergence]
\label{thm:spectral-convergence}
Assume $\partial\Omega\neq\varnothing$.  There are $C>0$ and
$\kappa_{\rm sp}>0$ such that
\begin{equation}\label{eq:spectral-convergence}
 \sup_{\varrho\in[0,1]}
 \bigl|\Lambda_{\kappa,\varrho}-\mu_\varrho\bigr|
 \leq C\kappa^{-1/2}
 \qquad(\kappa\geq\kappa_{\rm sp}).
\end{equation}
\end{theorem}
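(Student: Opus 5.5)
The estimate \eqref{eq:spectral-convergence} splits into a lower bound and an upper bound for $\Lambda_{\kappa,\varrho}-\mu_\varrho$, and both will be obtained at the growing localization scale $R=\sqrt\kappa$ (admissible in Lemma~\ref{lem:growing-fermi} once $\sqrt\kappa\le c_0\kappa$, i.e.\ for $\kappa\ge c_0^{-2}$, and $R\ge1$).

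\emph{Lower bound.} Apply the potential-bearing Robin--Lieb inequality \eqref{eq:lieb} with $V=-f'(\Phi_{\kappa,\varrho})$ on $D_\kappa$ and radius $R$:
\[
 \Lambda_{\kappa,\varrho}\ge\inf_{z}\lambda\bigl(-\Delta-f'(\Phi_{\kappa,\varrho}),D_\kappa\mid B_R(z),\varrho\bigr)-\lambda_1(B_1)R^{-2}.
\]
The localized bottom is the infimum over connected components $G$ of $D_\kappa\cap B_R(z)$ (empty ones giving $+\infty$). For each component, part (i) of Lemma~\ref{lem:growing-fermi} gives the bound $\mu_\varrho-C(R/\kappa+e^{-cR})$, uniformly in $z$ and $\varrho$. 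Hence
\[
 \Lambda_{\kappa,\varrho}-\mu_\varrho\ge -C\bigl(R/\kappa+e^{-cR}+R^{-2}\bigr),
\]
and with $R=\sqrt\kappa$ this is $\ge -C\kappa^{-1/2}$. This direction is mechanical once Lemma~\ref{lem:growing-fermi}(i) is accepted.

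\emph{Upper bound.} Fix any $p\in\partial\Omega$. Take the quasimode $\Psi=\Psi_{\varrho,R}$ from Lemma~\ref{lem:uniform-quasimodes}, which is supported in $B_{3R}\cap\overline{\Hh^d}$ and satisfies $Q_\varrho(\Psi)\le\mu_\varrho+C/R$. Transplant it by $T_{\kappa,p}$ as in Lemma~\ref{lem:growing-fermi}(ii) to get
\[
 \Lambda_{\kappa,\varrho}\le\frac{Q_{\kappa,\varrho}(T_{\kappa,p}\Psi)}{\|T_{\kappa,p}\Psi\|_2^2}\le\mu_\varrho+\frac CR+CC_1\frac R\kappa,
\]
which is $\le\mu_\varrho+C\kappa^{-1/2}$ when $R=\sqrt\kappa$.

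The main obstacle is verifying the hypothesis of (ii) with $C_1$ independent of $\varrho$ and $R$: the quasimodes must satisfy
\[
 \int|\nabla\Psi|^2+\beta_\varrho\int_{\partial\Hh^d}\Psi^2\le C_1.
\]
I would derive this from the form bound itself. Since $\|\Psi\|_2=1$ and $\|V_\varrho\|_\infty\le\sup_{[0,1]}|f'|$, we have
\[
 \int|\nabla\Psi|^2+\beta_\varrho\int_{\partial\Hh^d}\Psi^2=Q_\varrho(\Psi)-\int V_\varrho\Psi^2\le\mu_\varrho+C/R+\|f'\|_\infty\le a_f+C+\|f'\|_\infty,
\]
using that the Robin term is nonnegative and $\mu_\varrho\le a_f$. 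This bound is uniform as $\beta_\varrho\to\infty$, because at the Dirichlet end the quasimode has zero trace. With $C_1$ fixed, both bounds hold for all $\kappa\ge\kappa_{\rm sp}:=\max(c_0^{-2},R_0^2,1)$, uniformly in $\varrho\in[0,1]$, which gives \eqref{eq:spectral-convergence}.
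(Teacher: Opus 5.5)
Your proposal is correct and follows the paper's proof essentially step for step. You use the same choice $R=\sqrt\kappa$. The lower bound comes from the Robin--Lieb inequality combined with Lemma~\ref{lem:growing-fermi}(i) applied to each connected component. The upper bound comes from transplanting the quasimode of Lemma~\ref{lem:uniform-quasimodes} via Lemma~\ref{lem:growing-fermi}(ii), with the uniform constant $C_1$ obtained, as in the paper, from the quasimode bound and the boundedness of $V_\varrho$.
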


\begin{proof}
Set $R=\sqrt\kappa$.  After increasing the lower threshold for $\kappa$, this
choice satisfies both $R\geq R_0$ and $R\leq c_0\kappa$.  The Robin--Lieb inequality and
Lemma~\ref{lem:growing-fermi}(i), applied to every connected component of
every localization, give
\begin{equation}\label{eq:quantitative-large-lower}
 \Lambda_{\kappa,\varrho}
 \geq\mu_\varrho-C\left(\frac{R}{\kappa}+e^{-cR}+R^{-2}\right)
 \geq\mu_\varrho-C\kappa^{-1/2}.
\end{equation}
For the upper bound, fix $p\in\partial\Omega$ and use the quasimode
$\Psi_{\varrho,R}$ from
Lemma~\ref{lem:uniform-quasimodes}.  Its gradient energy plus Robin boundary
energy is uniformly bounded: this follows from
\eqref{eq:uniform-quasimode}, the uniform bound on $V_\varrho$, and the
nonnegativity of the Robin term.  Lemma~\ref{lem:growing-fermi}(ii), followed by the Rayleigh principle, gives
\[
 \Lambda_{\kappa,\varrho}
 \leq Q_\varrho(\Psi_{\varrho,R})+C\frac{R}{\kappa}
 \leq\mu_\varrho+C\left(R^{-1}+\frac{R}{\kappa}\right)
 \leq\mu_\varrho+C\kappa^{-1/2}.
\]
Combining the two bounds proves \eqref{eq:spectral-convergence}, uniformly
in $\varrho$.
\end{proof}

The exponent $1/2$ is not asserted to be sharp for a fixed parameter.  It is
the rate produced by a construction that remains uniform when
$\mu_\varrho$ approaches the bottom of the essential spectrum.  The condition
$\partial\Omega\ne\varnothing$ is needed for the two-sided limit: for
$\Omega=\R^d$, $\Lambda_{\kappa,\varrho}=|f'(1)|$ whereas $\mu_\varrho$ in
general depends on $\varrho$.

Together with Proposition~\ref{prop:coalescence} and
\eqref{eq:potential-lipschitz}, Theorem~\ref{thm:spectral-convergence} also
gives qualitative spectral convergence when
$f'(\Phi_{\kappa,\varrho})$ is replaced by the linearization
$f'(u_{\kappa,\varrho})$ at any positive state.

\section{Uniqueness and stability of the nonlinear states}\label{sec:proof-main}

Existence is simpler than uniqueness and does not use the boundary layer.
A single interior Dirichlet test function is admissible for every boundary
parameter.

\begin{lemma}[Uniform existence]\label{lem:existence}
There is $\kappa_{\rm ex}(f,\Omega)$ such that
$\cU_{\kappa,\varrho}$ is nonempty for every
$\kappa\geq\kappa_{\rm ex}$ and $\varrho\in[0,1]$.
\end{lemma}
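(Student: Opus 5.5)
We plan to use sub- and supersolutions with a single interior Dirichlet bump, chosen independently of $\varrho$. Fix $s=1/2$ and the radius $R=R(f,1/2)$ from Lemma~\ref{lem:interior-saturation}. Let $v$ be the first Dirichlet eigenfunction of $B_{R/2}$ scaled so that $\max v$ is small, as in that proof, so that every $\theta v$ with $0<\theta\le1$ satisfies $-\Delta(\theta v)\le f(\theta v)$ in $B_{R/2}$. Since $\Omega$ is open and nonempty, it contains some ball $B_{r}(x_0)$. Set $\kappa_{\rm ex}:=R/r$. Then for $\kappa\ge\kappa_{\rm ex}$ the ball $B_{R/2}(\kappa x_0)$ lies in $D_\kappa$, and in fact $B_R(\kappa x_0)\subset D_\kappa$. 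Define $\underline u$ as $v(\cdot-\kappa x_0)$ extended by zero. This is the maximum of a classical subsolution and the subsolution $0$, so it is a generalized (weak) subsolution on $D_\kappa$. It vanishes near $\partial D_\kappa$, so it satisfies $\cN_\varrho\underline u=0$ for every $\varrho\in[0,1]$. The test function is therefore the same for all boundary parameters.

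The supersolution is $\overline u\equiv1$: we have $f(1)=0$, and $\cN_\varrho 1=1-\varrho\ge0$, which is the correct sign for a supersolution with outward normal; at $\varrho=1$ equality holds. Also $\underline u\le\overline u$. We then run the standard monotone iteration for Robin problems on possibly unbounded uniformly smooth domains, e.g.\ \cite{BerestyckiGrahamStrongKPP}. Concretely, we exhaust $D_\kappa$ by bounded domains, or equivalently solve on $D_\kappa\cap B_m$ with Dirichlet data equal to $\underline u$ on the artificial boundary. On each such domain we add $Mu$ with $M>\sup_{[0,1]}|f'|$ and iterate the linear mixed problems starting from $\overline u$. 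The resulting monotone limits $\underline u\le u_m\le1$ are solutions. Uniform $C^{2,\gamma}$ Schauder and oblique estimates (Dirichlet estimates when $\varrho=0$) then let us pass $m\to\infty$, producing a solution $u$ on $D_\kappa$ with $\underline u\le u\le1$. Here $u$ is bounded and not identically zero, and the strong maximum principle (Hopf's lemma is compatible with the Robin sign) gives $u>0$. Hence $u\in\cU_{\kappa,\varrho}$.

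The main technical point is the passage to the limit on unbounded domains uniformly across the Dirichlet, Robin and Neumann cases. Once $\kappa$ is fixed, the estimates only need to hold for each $\varrho$ separately, and no uniformity in $\varrho$ is required because the threshold $\kappa_{\rm ex}$ came from the sub/supersolution pair alone. If the exhaustion seems delicate, an alternative is to minimize the energy $\tfrac12\mathfrak q_{0,\varrho}-\int F(u)$ over $\{\underline u\le u\le1\}$ on truncations and then extract a limit. Either route yields existence with $\kappa_{\rm ex}=\kappa_{\rm ex}(f,\Omega)$.
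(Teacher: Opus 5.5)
Your argument is correct, but it takes a different route from the paper. The paper never builds an ordered pair explicitly. It uses the zero-extended first Dirichlet eigenfunction of $B_R(\kappa x_0)$, with $\lambda_1(B_R)<f'(0)$, only as a test function in the form. This gives $\lambda(-\Delta,D_\kappa,\varrho)\le\lambda_1(B_R)<f'(0)$ simultaneously for all $\varrho\in[0,1]$. It then invokes \cite[Theorem~1.5]{BerestyckiGrahamStrongKPP}, which produces a bounded positive solution on a possibly unbounded domain whenever the generalized principal eigenvalue lies below $f'(0)$.

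You instead reprove the relevant half of that theorem by hand, using four ingredients:
\begin{enumerate}[label=(\roman*)]
\item the bump from Lemma~\ref{lem:interior-saturation}, extended by zero; this is a weak subsolution because its outward normal derivative on $\partial B_{R/2}$ is nonpositive and $f(0)=0$, which is the precise justification behind your ``maximum of two subsolutions'';
\item the constant supersolution $1$, valid for every $\varrho$ since $\cN_\varrho 1=1-\varrho\ge0$;
\item monotone iteration on truncations;
\item local compactness as $m\to\infty$.
\end{enumerate}

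Both proofs get $\varrho$-independence from the same source. The key object is supported at distance at least $R/2$ from $\partial D_\kappa$, so it never sees the boundary condition, and both give $\kappa_{\rm ex}=R/r$ (with slightly different choices of $R$).

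Your version is self-contained, makes the mechanism transparent, and yields the extra lower bound $u\ge\underline u$. The price is the work the citation hides. The truncated problems on $D_\kappa\cap B_m$ have corners where $\partial D_\kappa$ meets $\partial B_m$. There the iteration and comparison should be run in the weak $H^1$ setting, or via smoothed truncations or the constrained minimization you mention. Classical regularity is then recovered only locally, away from the artificial boundary. This is routine rather than a gap, since $\kappa$ and $\varrho$ are fixed throughout the construction.
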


\begin{proof}
Fix $x_0\in\Omega$ and $r>0$ with $B_r(x_0)\Subset\Omega$, and take $R>0$
so large that
\[
 \lambda_1(B_R)<f'(0).
\]
For every $\kappa\geq R/r$, the dilated domain $D_\kappa$ contains the ball
$B_R(\kappa x_0)$.  Let $\psi_R$ be an $L^2$-normalized first Dirichlet
eigenfunction on this ball, extended by zero to $D_\kappa$.  Since the ball is
compactly contained in $D_\kappa$, this extension belongs to
$H_0^1(D_\kappa)$ and is an admissible form test function for every
$\varrho\in[0,1]$; for $\varrho>0$ its trace on $\partial D_\kappa$ is zero,
so the Robin boundary term also vanishes.  The form characterization
\eqref{eq:form-bottom-definition} gives simultaneously for all boundary
parameters
\begin{equation}\label{eq:existence-eigenvalue}
 \lambda(-\Delta,D_\kappa,\varrho)
 \leq \lambda_1(B_R)<f'(0).
\end{equation}
Here the potential is zero, so the form bottom agrees with the generalized
principal eigenvalue used in
\cite[Theorem~1.5]{BerestyckiGrahamStrongKPP}.  Since $f$ is a positive
reaction, that theorem applied to \eqref{eq:existence-eigenvalue} yields a
bounded positive solution of \eqref{eq:rho-problem} for every
$\varrho\in[0,1]$, including the Neumann endpoint.  We may take
$\kappa_{\rm ex}=R/r$, independently of $\varrho$.
\end{proof}

\begin{proof}[Proof of Theorem~\ref{thm:main}]
Let $\mu_*>0$ be given by Proposition~\ref{prop:uniform-gap}.  Uniform
continuity of $f'$ on $[0,1]$ provides $\delta>0$ such that
\begin{equation}\label{eq:delta-choice}
 |a-b|\leq\delta
 \quad\Longrightarrow\quad
 |f'(a)-f'(b)|\leq\frac{\mu_*}{4}.
\end{equation}
By Propositions~\ref{prop:coalescence} and \ref{prop:large-gap}, there is
$\kappa_0\geq\max\{\kappa_{\rm ex},\kappa_1\}$ such that, for every
$\kappa\geq\kappa_0$ and $\varrho\in[0,1]$,
\begin{equation}\label{eq:two-uniform-estimates}
 \sup_{u\in\cU_{\kappa,\varrho}}
 \|u-\Phi_{\kappa,\varrho}\|_\infty\leq\delta,
 \qquad
 \Lambda_{\kappa,\varrho}\geq\frac{\mu_*}{2}.
\end{equation}
Let $u,v\in\cU_{\kappa,\varrho}$ and write $w=u-v$.  At each point, the
fundamental theorem of calculus gives
\[
 f(u)-f(v)=q(x)w,\qquad
 q(x):=\int_0^1 f'\bigl(v(x)+t(u(x)-v(x))\bigr)\,dt .
\]
Every argument of $f'$ in this integral lies within $\delta$ of
$\Phi_{\kappa,\varrho}(x)$ by the first estimate in
\eqref{eq:two-uniform-estimates}.  Using \eqref{eq:delta-choice} and
\eqref{eq:potential-lipschitz} with the second estimate in
\eqref{eq:two-uniform-estimates} gives
\begin{equation}\label{eq:mean-gap}
 \lambda(-\Delta-q,D_\kappa,\varrho)
 \geq\frac{\mu_*}{4}>0.
\end{equation}
Moreover,
\[
 (-\Delta-q)w=0\quad\text{in }D_\kappa,
 \qquad \cN_\varrho w=0\quad\text{on }\partial D_\kappa.
\]
Since $0<u,v\leq1$ and $f'$ is continuous on $[0,1]$, one has
$q\in L^\infty(D_\kappa)$, while $w\in H^1_{\rm loc}(\overline{D_\kappa})\cap
L^\infty(D_\kappa)$.  Apply Lemma~\ref{lem:bounded-weak-maximum} to $w$ with potential $V=-q$.
Equation \eqref{eq:mean-gap} and the homogeneous boundary condition give
$w\geq0$; applying the same lemma to $-w$ gives $w\leq0$.  Thus $w=0$.  Lemma~\ref{lem:existence} supplies existence.

To return to the original parameter, for $0\leq\varrho<1$ we have
$\alpha=\varrho/(1-\varrho)$, and \eqref{eq:compact-param} becomes
$u+\alpha\partial_\nu u=0$.  The value $\varrho=0$ corresponds to
$\alpha=0$, while $\varrho=1$ is the adjoined Neumann endpoint
$\alpha=+\infty$.  Hence the threshold $\kappa_0$ found above is the same
throughout the Dirichlet--Robin--Neumann family.
\end{proof}

\subsection{Compact reaction families}

\begin{proof}[Proof of Theorem~\ref{thm:reaction-family}]
All positive states used below take values in $(0,1]$, so only the restrictions
$f|_{[0,1]}$ enter the compactness and spectral arguments; no uniform control
of the family outside $[0,1]$ is needed.  The data in
\eqref{eq:family-basic-data} are uniform over the compact family
$\mathfrak F$; in particular, for each
$s\in(0,1)$,
\[
 \inf_{\substack{f\in\mathfrak F\\0<r\le s}}\frac{f(r)}r>0,
\]
so interior saturation is uniform.  Consider a contradiction sequence with
$f_n\in\mathfrak F$ and $\varrho_n\in[0,1]$.  Compactness of $\mathfrak F$ allows us, after taking a subsequence, to assume
$f_n\to f_*$ in $C^1([0,1])$ and $\varrho_n\to\varrho_*$.  The compactness
argument of Lemma~\ref{lem:joint-compactness} is unchanged with $f$ replaced
by $f_n$: the right-hand sides $f_n(u_n)$ are uniformly bounded, interior and
oblique estimates therefore have common constants, and every locally uniform
limit satisfies $-\Delta u_*=f_*(u_*)$.  At the Dirichlet endpoint the second
rescaling gives
$-\Delta v_n=\beta_n^{-2}f_n(v_n)$; the common $C^1$ bound on the family
again makes the rescaled right-hand side tend uniformly to zero, so the same
Robin Liouville argument forces the boundary traces to vanish.  The joint
version of Lemma~\ref{lem:joint-compactness} therefore holds for
$(f_n,\varrho_n)$.  Proposition~\ref{prop:family-halfline} then gives
\[
 \phi^{f_n}_{\varrho_n}\longrightarrow
 \phi^{f_*}_{\varrho_*}\qquad\hbox{uniformly on }\R_+,
\]
and the contradiction proof of Proposition~\ref{prop:coalescence} yields
coalescence uniformly in $(f,\varrho)$.

In a boundary chart,
\[
 -f_n'\bigl(\phi^{f_n}_{\varrho_n}(y)\bigr)
 \longrightarrow
 -f_*'\bigl(\phi^{f_*}_{\varrho_*}(y)\bigr)
\]
uniformly on compact $y$-intervals, while \eqref{eq:family-common-exponential-tail}
controls the far field uniformly.  Repeating the two cases in
Proposition~\ref{prop:large-gap} and using
\eqref{eq:family-uniform-gap} gives a spectral lower bound
$\widehat\mu/2$ beyond one common dilation threshold.

For existence, choose $B_R$ with $\lambda_1(B_R)<a_0$ and use the
zero-extended Dirichlet eigenfunction from Lemma~\ref{lem:existence}.
Theorem~1.5 of \cite{BerestyckiGrahamStrongKPP} then supplies a positive
solution for every $(f,\varrho)$.  Compactness in $C^1$, together with the
common H\"older bound in \eqref{eq:compact-reaction-family}, gives a common
modulus of continuity for $f'$, so the maximum-principle argument in
Theorem~\ref{thm:main} is uniform as well.
\end{proof}

\section{The parameter-uniform curvature correction}
\label{sec:curvature}

The final section concerns the first correction to the boundary layer.  In
normal coordinates it is proportional to the mean curvature, with coefficient
determined by a linearized half-line problem.  What is new here, compared with
a fixed boundary condition, is the need to keep the coercive resolvent
continuous at the Dirichlet endpoint.

\begin{lemma}[Continuity of coercive half-line resolvents]
\label{lem:halfline-resolvent-continuity}
Let $\varrho_n\to\varrho$ in $[0,1]$, let $V_n,V\in L^\infty(\R_+)$ be real
with $V_n\to V$ in $L^\infty$, and let $g_n\to g$ in $L^2(\R_+)$.  Denote by
$\mathfrak a_n$ and $\mathfrak a$ the forms of
$-\partial_y^2+V_n$ and $-\partial_y^2+V$, respectively, with the indicated
$\varrho_n$ and $\varrho$ boundary conditions.  Suppose that, for some
$m>0$,
\[
 \mathfrak a_n[h]\geq m\|h\|_2^2,
 \qquad \mathfrak a[h]\geq m\|h\|_2^2
\]
on the corresponding form domains.  If
\[
 z_n=(-\partial_y^2+V_n)^{-1}g_n,
 \qquad z=(-\partial_y^2+V)^{-1}g,
\]
then $z_n\to z$ in $H^1(\R_+)$.  In fact the convergence is in $H^2$ and
hence in $C_b^1$.
\end{lemma}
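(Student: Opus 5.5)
I would argue variationally: bound the $z_n$ uniformly in $H^1$ using coercivity, extract a weak limit, identify it with $z$, and upgrade to strong convergence through an energy identity. First, uniform bounds. Testing the weak equation $\mathfrak a_n(z_n,h)=(g_n,h)$ with $h=z_n$ gives $m\|z_n\|_2^2\le\mathfrak a_n[z_n]=(g_n,z_n)$, so $\|z_n\|_2\le m^{-1}\|g_n\|_2$ is bounded. Since $\|V_n\|_\infty$ is bounded, $\|z_n'\|_2^2+\beta_{\varrho_n}|z_n(0)|^2$ is also bounded; at Dirichlet indices the boundary term is absent and $z_n(0)=0$. Along a subsequence, $z_n\rightharpoonup\tilde z$ weakly in $H^1(\R_+)$ and locally uniformly, so in particular $z_n(0)\to\tilde z(0)$.

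Second, I would show $\tilde z$ lies in the limiting form domain and solves the limit equation; this is the step I expect to be the main obstacle, because it is where the Dirichlet endpoint enters. If $\varrho=0$, then $\beta_{\varrho_n}\to\infty$ when $\varrho_n>0$, and the bound on $\beta_{\varrho_n}|z_n(0)|^2$ forces $\tilde z(0)=0$, so $\tilde z\in H_0^1$. Test functions $h\in C_c^\infty(0,\infty)$ have zero trace and are admissible for every $n$, and passing to the limit in $\mathfrak a_n(z_n,h)=(g_n,h)$ gives the weak equation against a dense subspace of $H_0^1$; this identifies $\tilde z=z$. If $\varrho\in(0,1]$, then $\beta_{\varrho_n}\to\beta_\varrho<\infty$. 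I would test with $h\in C_c^\infty([0,\infty))$, admissible for $n$ large because then $\varrho_n>0$, and use the convergence of the trace term $\beta_{\varrho_n}z_n(0)h(0)\to\beta_\varrho\tilde z(0)h(0)$ to get the Robin weak equation. Coercivity of $\mathfrak a$ then gives uniqueness, so $\tilde z=z$ and the whole sequence converges weakly.

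Third, strong convergence. Using $\mathfrak a_n[z_n]=(g_n,z_n)$, I would compute
\[
 \mathfrak a_n[z_n-z]=(g_n,z_n)-2\mathfrak a_n(z_n,z)+\mathfrak a_n[z],
\]
where $z$ is admissible for $\mathfrak a_n$ in every case. At the Dirichlet endpoint this holds because $z$ has zero trace, so the boundary terms vanish and $\mathfrak a_n(z_n,z)=(g_n,z)+o(1)$ after the potential error $\|V_n-V\|_\infty$ is absorbed. In the Robin case $\mathfrak a_n(\cdot,z)\to\mathfrak a(\cdot,z)$ along the weakly convergent sequence. Both $(g_n,z_n)$ and $(g_n,z)$ tend to $(g,z)=\mathfrak a[z]$, and $\mathfrak a_n[z]\to\mathfrak a[z]$, so the right side tends to zero. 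Coercivity together with the bounded potentials gives $\|(z_n-z)'\|_2^2\le\mathfrak a_n[z_n-z]+C\|z_n-z\|_2^2$ and $m\|z_n-z\|_2^2\le\mathfrak a_n[z_n-z]$, which yields $H^1$ convergence. Finally, the equation $z_n''=V_nz_n-g_n$ holds in $L^2$ and converges in $L^2$ to $Vz-g=z''$, which gives $H^2$ convergence and hence $C^1_b$ convergence by the one-dimensional Sobolev embedding.
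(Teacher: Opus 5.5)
Your proof is correct, but it follows a different route from the paper.

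The paper never extracts a weak limit. For $\varrho>0$ it subtracts the two variational equations, tests with $h_n=z_n-z$, and reads off $H^1$ convergence from the resulting identity together with uniform $H^1$ coercivity. At the Dirichlet endpoint with Robin indices, $z_n-z$ is not admissible for the limiting $H_0^1$ form. The paper therefore subtracts the explicit corrector $b_ne^{-y}$, with $b_n=z_n(0)\to0$, and tests with $z_n-b_ne^{-y}-z\in H_0^1(\R_+)$.

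You argue by compactness instead. The bound on $\beta_{\varrho_n}|z_n(0)|^2$ forces the weak limit to have zero trace, and $C_c^\infty$ test functions identify that limit as $z$. You then use $z$ as a test function in the $n$th problem. This is always legitimate, because $H_0^1(\R_+)$ lies in every form domain. Finally, $\mathfrak a_n[z_n-z]\to0$ gives strong convergence. (Incidentally, $\mathfrak a_n(z_n,z)=(g_n,z)$ holds exactly once $z$ is admissible for $\mathfrak a_n$, so no $o(1)$ from the potentials is needed there.)

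Your route treats the Robin and Dirichlet regimes in one way and needs no corrector. It is essentially a Mosco-type argument resting only on the inclusion of form domains.

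The paper's direct subtraction also yields a quantitative modulus. In the Robin regime it gives $\|z_n-z\|_{H^1}\lesssim\|g_n-g\|_2+\|V_n-V\|_\infty+|\beta_{\varrho_n}-\beta_\varrho|$. At the Dirichlet end there is an extra term $O(|z_n(0)|)=O(\beta_{\varrho_n}^{-1/2})$. Your argument is purely qualitative, but that is all the lemma asserts and all that Lemma~\ref{lem:curvature-response} uses.
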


\begin{proof}
Testing the equations by $z_n$ gives uniform $L^2$ bounds.  Since the
potentials are uniformly bounded and the Robin boundary terms are
nonnegative, the same identities give
\begin{equation}\label{eq:resolvent-uniform-form-bound}
 \sup_n\bigl(\|z_n\|_{H^1}^2
   +\beta_{\varrho_n}|z_n(0)|^2\bigr)<\infty,
\end{equation}
where the last term is omitted at Dirichlet indices.

If $\varrho>0$, then
$\beta_{\varrho_n}\to\beta_\varrho<\infty$.  With $h_n=z_n-z$, subtracting
the two variational equations gives
\begin{align*}
 \mathfrak a_n[h_n]
 &=\langle g_n-g,h_n\rangle
   +\int_0^\infty(V-V_n)zh_n\,dy\\
 &\quad +(\beta_\varrho-\beta_{\varrho_n})z(0)h_n(0).
\end{align*}
The assumed $L^2$ coercivity, the uniform $L^\infty$ bound on the potentials,
and nonnegativity of the boundary term imply a uniform $H^1$ coercivity
estimate.  Indeed, if $M=\sup_n\|V_n\|_\infty$, then
\[
 \|h\|_{H^1}^2
 \leq \bigl(1+(1+M)/m\bigr)\mathfrak a_n[h]
\]
for every function in the $n$th form domain.  The trace inequality and the
preceding identity therefore show that $h_n\to0$ in $H^1$.

At $\varrho=0$ there are two possible types of indices.  Along any
subsequence with $\varrho_n=0$, both $z_n$ and $z$ lie in $H_0^1(\R_+)$;
subtracting the two variational equations and using the same uniform $H^1$
coercivity gives $z_n\to z$ directly.  It remains to treat a subsequence with
$\varrho_n>0$.  For such a subsequence
$\beta_{\varrho_n}\to\infty$.  Put $b_n=z_n(0)$.  By
\eqref{eq:resolvent-uniform-form-bound}, $b_n\to0$.  The trace-corrected
functions
\[
 \widetilde z_n:=z_n-b_ne^{-y}
\]
belong to $H_0^1(\R_+)$.  Set $h_n=\widetilde z_n-z$ and subtract the two
weak equations using $h_n$ as test function.  Since $h_n(0)=0$, the Robin
term disappears and
\begin{align*}
 \int_0^\infty\bigl(|h_n'|^2+V_nh_n^2\bigr)\,dy
 &=\langle g_n-g,h_n\rangle
   +\int_0^\infty(V-V_n)zh_n\,dy\\
 &\quad-b_n\int_0^\infty
       \bigl((e^{-y})'h_n'+V_ne^{-y}h_n\bigr)\,dy.
\end{align*}
On zero-trace functions the left side is $\mathfrak a_n[h_n]$ and is
uniformly $H^1$ coercive, as above.  Every term on the right is
$o(1)\|h_n\|_{H^1}$, so $h_n\to0$ in $H^1$.  Since
$b_ne^{-y}\to0$ in $H^1$, also $z_n\to z$ in $H^1$.

The second derivatives are then read directly from the ODE:
\[
 z_n''=V_nz_n-g_n,
 \qquad z''=Vz-g,
\]
and the convergence already obtained gives $z_n''\to z''$ in $L^2$.  Thus
$z_n\to z$ in $H^2$, and the one-dimensional Sobolev embedding yields convergence
in $C_b^1$.
\end{proof}

\begin{lemma}[Uniform curvature-response profiles]
\label{lem:curvature-response}
For every $\varrho\in[0,1]$, problem
\eqref{eq:intro-curvature-response} has a unique solution
$\chi_\varrho\in H^1(\R_+)$.  In addition,
\begin{equation}\label{eq:response-sign-continuity}
 \chi_\varrho\leq0,\qquad
 [0,1]\ni\varrho\longmapsto\chi_\varrho
 \text{ is continuous into }H^1(\R_+)\cap C_b^1(\R_+),
\end{equation}
and there are constants $C,c>0$, independent of $\varrho$, such that
\begin{equation}\label{eq:response-exponential}
 |\phi_\varrho'(y)|+
 \sum_{j=0}^2|\chi_\varrho^{(j)}(y)|
 \leq Ce^{-cy}\qquad(y\geq0).
\end{equation}
At the Neumann endpoint this gives $\chi_1=0$.
\end{lemma}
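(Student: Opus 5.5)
Existence and uniqueness come from coercivity.  Let $L_\varrho=-\partial_y^2+V_\varrho$, with $V_\varrho=-f'(\phi_\varrho)$ and the $\varrho$ boundary form.  By Proposition~\ref{prop:uniform-gap} the form is bounded below by $\mu_*\|h\|_2^2$ for every $\varrho\in[0,1]$.  The right-hand side $-\phi_\varrho'$ lies in $L^2(\R_+)$, because \eqref{eq:family-common-exponential-tail} together with the energy identity $\phi_\varrho'=\sqrt{2P(\phi_\varrho)}\le C(1-\phi_\varrho)$ gives $0\le\phi_\varrho'\le Ce^{-cy}$ uniformly in $\varrho$.  Lax--Milgram then gives a unique $\chi_\varrho=L_\varrho^{-1}(-\phi_\varrho')$ in the form domain.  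One should check that the weak boundary condition matches the one in \eqref{eq:intro-curvature-response}.  At $\varrho=1$ we have $\phi_1'=0$, hence $\chi_1=0$.

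For continuity, let $\varrho_n\to\varrho$.  By Proposition~\ref{prop:translation}, $\phi_{\varrho_n}=\phi_0(\cdot+c_{\varrho_n})$, so $V_{\varrho_n}\to V_\varrho$ in $L^\infty$.  We also need $\phi_{\varrho_n}'\to\phi_\varrho'$ in $L^2$.  This follows from translation continuity of $\phi_0'\in L^2$ when $\varrho<1$.  At $\varrho=1$ the translations $c_{\varrho_n}\to\infty$, and the exponential tail makes $\|\phi_0'(\cdot+c)\|_2\to0$.  Lemma~\ref{lem:halfline-resolvent-continuity}, applied with $m=\mu_*$, then yields convergence in $H^2$, hence in $H^1\cap C_b^1$.

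The sign follows from the maximum principle.  Since $\phi_\varrho'\ge0$, the function $-\chi_\varrho$ satisfies $L_\varrho(-\chi_\varrho)=\phi_\varrho'\ge0$ with homogeneous $\varrho$ data.  Lemma~\ref{lem:bounded-weak-maximum} on the half-line requires boundedness, which holds because $\chi_\varrho\in H^1\subset L^\infty$.  With $\lambda\ge\mu_*>0$ it gives $-\chi_\varrho\ge0$.

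The main obstacle is the uniform exponential bound \eqref{eq:response-exponential}.  The plan is a comparison with an exponential supersolution on a far interval.  By \eqref{eq:uniform-potential-tail}, choose $Y$ independent of $\varrho$ such that $V_\varrho\ge a_f/2$ on $[Y,\infty)$.  The uniform $H^1$ bound, which follows from coercivity and the uniform $L^2$ bound on $\phi_\varrho'$, gives $\|\chi_\varrho\|_\infty\le K$ uniformly.  Set $c=\min(\sqrt{a_f}/2,c_1)$, where $c_1$ is the decay rate of $\phi_\varrho'$, and consider $w=Ae^{-c(y-Y)}$.  Then $(-\partial_y^2+V_\varrho)w\ge(a_f/2-c^2)w\ge(a_f/4)w$ on $[Y,\infty)$.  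Choose $A\ge K$ large, uniformly, so that $(a_f/4)w\ge\phi_\varrho'$ there.  Then $w\pm\chi_\varrho$ is a bounded supersolution on $(Y,\infty)$ that is nonnegative at $y=Y$.  The Dirichlet problem there has bottom $\ge a_f/2>0$, so Lemma~\ref{lem:bounded-weak-maximum} gives $|\chi_\varrho|\le w$.  Adjusting constants for $y\le Y$ using the uniform sup bound gives decay of $\chi_\varrho$.  For the derivatives, the ODE $\chi_\varrho''=V_\varrho\chi_\varrho+\phi_\varrho'$ gives decay of $\chi_\varrho''$ directly.  Decay of $\chi_\varrho'$ follows either by integrating $\chi_\varrho''$ from infinity, since $\chi_\varrho'\to0$ as $\chi_\varrho\in H^2$, or by interpolation.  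All constants depend only on $a_f$, the uniform tail, and $\mu_*$.
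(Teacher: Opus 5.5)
Your proposal is correct and follows the paper's proof essentially step for step. The shared steps are coercivity from Proposition~\ref{prop:uniform-gap} for existence and uniqueness, Lemma~\ref{lem:halfline-resolvent-continuity} for continuity, the maximum principle for $\chi_\varrho\le0$, an exponential supersolution on $[Y,\infty)$, the ODE for $\chi_\varrho''$, and integration from infinity for $\chi_\varrho'$. The one real difference is at the comparison step: you apply Lemma~\ref{lem:bounded-weak-maximum} directly on the unbounded interval $(Y,\infty)$ to $w\pm\chi_\varrho$, whereas the paper truncates to $[Y,T]$ with an added $\varepsilon$ and uses compactness of $\{\chi_\varrho\}$ for uniform vanishing at infinity. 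Your route is valid and avoids that compactness argument.
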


\begin{proof}
Let $L_\varrho=-\partial_y^2-f'(\phi_\varrho)$ with the
$\varrho$ boundary condition, and let $\mathfrak q_\varrho$ be its closed
quadratic form.  Proposition~\ref{prop:uniform-gap} gives
\begin{equation}\label{eq:response-coercivity}
 \mathfrak q_\varrho[h]\geq\mu_*\|h\|_2^2
\end{equation}
on the corresponding form domain, uniformly in $\varrho$.  Also,
$\phi_\varrho'=\phi_0'(\,\cdot+c_\varrho)$ for $\varrho<1$ and
$\phi_1'=0$, so $\sup_\varrho\|\phi_\varrho'\|_2<\infty$.
The representation theorem for coercive closed forms gives a unique
\begin{equation}\label{eq:response-resolvent}
 \chi_\varrho=-L_\varrho^{-1}\phi_\varrho'.
\end{equation}
Taking $\chi_\varrho$ as a test function in its weak equation and using
\eqref{eq:response-coercivity} yields uniform $L^2$ bounds.  Since
$f'$ is bounded on $[0,1]$ and the Robin boundary term is nonnegative, the same
identity then gives a uniform bound for $\|\chi_\varrho'\|_2$.  The equation
gives the corresponding local $H^2$ bounds.  Positivity of the resolvent, or
equivalently the maximum principle applied to $-\chi_\varrho$, proves
$\chi_\varrho\leq0$.

If $\varrho_n\to\varrho$, Proposition~\ref{prop:translation} gives convergence
of the potentials in $L^\infty$ and convergence of
$\phi_{\varrho_n}'$ in $L^2$.  The latter follows directly from the translation
formula: at the Neumann endpoint it is the $L^2$ tail of $\phi_0'$, while at
every other parameter the translation lengths converge.  Lemma
\ref{lem:halfline-resolvent-continuity}, applied with
$V_n=-f'(\phi_{\varrho_n})$ and $g_n=-\phi_{\varrho_n}'$, shows that the
solutions in \eqref{eq:response-resolvent} converge in
$H^1\cap C_b^1$.  At $\varrho=1$ the right-hand side is zero, so uniqueness
gives $\chi_1=0$.

Uniform decay follows from the linearization at $1$ and the energy identity
for $\phi_0$.  They give constants
$C,c_0>0$ such that
\[
 |1-\phi_0(y)|+|\phi_0'(y)|+|\phi_0''(y)|\leq Ce^{-c_0y}.
\]
Since $\phi_\varrho=\phi_0(\,\cdot+c_\varrho)$ for $\varrho<1$ and
$\phi_1\equiv1$, the same bound holds uniformly in $\varrho$ with
$\phi_0$ replaced by $\phi_\varrho$.  By \eqref{eq:uniform-tail}, there
are $Y>0$ and $a>0$ such that
$-f'(\phi_\varrho(y))\geq a$ for $y\geq Y$, uniformly in $\varrho$.
The uniform $H^1$ bound already obtained and the one-dimensional Sobolev
inequality give
$\sup_{\varrho\in[0,1]}|\chi_\varrho(Y)|\leq C$.  Each
$\chi_\varrho\in H^1(\R_+)\cap C_b^1(\R_+)$ tends to zero at infinity.
By \eqref{eq:response-sign-continuity}, the set
$\{\chi_\varrho:\varrho\in[0,1]\}$ is compact in $C_b(\R_+)$; since it is
contained in $C_0(\R_+)$, this compactness gives
$\sup_\varrho|\chi_\varrho(y)|\to0$ as $y\to\infty$.
Choose $0<c<\min\{c_0,\sqrt a\}$.  Since
$0\leq\phi_\varrho'(y)\leq Ce^{-c_0y}$, a sufficiently large multiple of
$e^{-c(y-Y)}$ dominates both $-\chi_\varrho$ at $y=Y$ and the forcing in
\[
 \bigl(-\partial_y^2-f'(\phi_\varrho)\bigr)(-\chi_\varrho)
 =\phi_\varrho'.
\]
To justify the comparison at infinity, apply it first on $[Y,T]$ after adding
a constant $\varepsilon>0$ to the supersolution.  The uniform vanishing
at infinity lets $T$ be chosen so that
$-\chi_\varrho(T)\leq\varepsilon$ for every $\varrho$, while the added
constant contributes at least $a\varepsilon$ to the left-hand side.  Sending
first $T\to\infty$ and then $\varepsilon\downarrow0$ gives
$|\chi_\varrho(y)|\leq Ce^{-cy}$ uniformly in $\varrho$.  The
equation and the corresponding exponential bound for $\phi_\varrho'$ yield
$|\chi_\varrho''(y)|\leq Ce^{-cy}$.  Since
$\chi_\varrho'\in L^2(\R_+)$ and $\chi_\varrho''\in L^1(Y,\infty)$,
$\chi_\varrho'$ has a finite limit at infinity, and that limit must be zero.
Since this limit is zero,
$|\chi_\varrho'(y)|\leq\int_y^\infty|\chi_\varrho''(s)|\,ds\leq Ce^{-cy}$.
After a harmless decrease of $c$, the joint tail estimate needed below is
\begin{equation}\label{eq:profile-response-joint-tail}
 \sup_{\varrho\in[0,1]}
 \left(|1-\phi_\varrho(y)|+|\phi_\varrho'(y)|+|\phi_\varrho''(y)|
 +\sum_{j=0}^2|\chi_\varrho^{(j)}(y)|\right)
 \leq Ce^{-cy}.
\end{equation}

\end{proof}

The residual estimate below also needs an $L^\infty$ bound for the inverse.
We obtain it from the compactness argument already used in the uniqueness
proof, without introducing a separate Green function.

\begin{lemma}[Uniform bounded resolvent on large dilations]
\label{lem:uniform-bounded-resolvent}
Assume that $\Omega$ is bounded and uniformly $C^{2,\gamma}$, with nonempty
boundary.  There are $\delta>0$, $\kappa_2>0$, and $C>0$ such that the
following holds.  Let $\kappa\geq\kappa_2$, $\varrho\in[0,1]$, and
$q,g\in L^\infty(D_\kappa)$ satisfy
\begin{equation}\label{eq:resolvent-potential-close}
 \|q-f'(\Phi_{\kappa,\varrho})\|_\infty\leq\delta.
\end{equation}
If a bounded weak solution $w$ satisfies
\begin{equation}\label{eq:bounded-resolvent-equation}
 -\Delta w-qw=g\quad\text{in }D_\kappa,\qquad
 \cN_\varrho w=0\quad\text{on }\partial D_\kappa,
\end{equation}
then
\begin{equation}\label{eq:bounded-resolvent-estimate}
 \|w\|_\infty\leq C\|g\|_\infty.
\end{equation}
\end{lemma}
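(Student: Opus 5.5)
We argue by contradiction and blow up, as announced before the statement. Suppose the estimate fails for every choice of constants. Then there are $\kappa_n\to\infty$, $\varrho_n\in[0,1]$, potentials $q_n$ with $\|q_n-f'(\Phi_{\kappa_n,\varrho_n})\|_\infty\le\delta$, and bounded weak solutions $w_n$. After normalizing, these satisfy
\[
 \|w_n\|_\infty=1,\qquad \|g_n\|_\infty\to0 .
\]
We fix $\delta=\mu_*/4$. Proposition~\ref{prop:large-gap} and \eqref{eq:potential-lipschitz} then give, for all $n$,
\[
 \lambda(-\Delta-q_n,D_{\kappa_n},\varrho_n)\ge\mu_*/4 .
\]
Choose points $x_n$ with $|w_n(x_n)|\ge1/2$. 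Translate $x_n$ (or its nearest boundary point, if $d_{\kappa_n}(x_n)$ stays bounded) to the origin and rotate as in Proposition~\ref{prop:coalescence}. The domains then converge locally to either $\R^d$ or $\Hh^d$. The potentials $q_n$ are bounded in $L^\infty$, so after extraction they converge weak-$*$ to some $q_\infty$.

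Local compactness of $w_n$ comes first. In the interior, $W^{2,p}$ estimates for $-\Delta w_n=q_nw_n+g_n$ give $C^{1,\theta}$ bounds. Near the boundary, if $\varrho_*>0$ the oblique estimates used in Lemma~\ref{lem:joint-compactness} apply. If $\varrho_*=0$, we repeat the endpoint argument of that lemma. Rescaling by $\beta_n^{-1}$ gives $-\Delta v_n=\beta_n^{-2}(q_nv_n+g_n)\to0$ with $\partial_\nu v_n+v_n=0$. Lemma~\ref{lem:harmonic-robin} then forces the boundary traces of $w_n$ to vanish locally uniformly. Lemma~\ref{lem:boundary-decay}, with right-hand side $h_n=q_nw_n+g_n$ bounded by a fixed constant, turns this into uniform smallness in a thin boundary layer. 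Consequently $w_n\to w_\infty$ locally uniformly up to the boundary, with $|w_\infty(0\text{ or }te_d)|\ge1/2$. The limit is a bounded weak solution of
\[
 (-\Delta-q_\infty)w_\infty=0
\]
on $\R^d$ or $\Hh^d$, with $\cN_{\varrho_*}$ boundary data. Weak convergence $q_nw_n\rightharpoonup q_\infty w_\infty$ holds because $w_n$ converges strongly on compact sets.

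The main obstacle is to show that the limit operator still has a positive spectral bottom, so that Lemma~\ref{lem:bounded-weak-maximum}, applied to $w_\infty$ and $-w_\infty$, forces $w_\infty=0$, a contradiction. The weak-$*$ limit $q_\infty$ satisfies
\[
 |q_\infty-f'(\phi_{\varrho_*}(y))|\le\delta
\]
almost everywhere, since the ball $\{\|q-Q\|_\infty\le\delta\}$ is weak-$*$ closed. Here $f'(\Phi_{\kappa_n,\varrho_n})$ converges uniformly on compacts to $f'(\phi_{\varrho_*}(y))$ in the boundary case, and to $f'(1)$ in the bulk case; this uses Proposition~\ref{prop:translation} and the Fermi coordinates. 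In the boundary case, Lemma~\ref{lem:product} and \eqref{eq:potential-lipschitz} then give
\[
 \lambda(-\Delta-q_\infty,\Hh^d,\varrho_*)\ge\mu_{\varrho_*}-\delta\ge 3\mu_*/4 .
\]
In the bulk case, $\lambda(-\Delta-q_\infty,\R^d)\ge|f'(1)|-\delta>0$. I expect the delicate point to be justifying that the bounded weak limit is a supersolution in the sense of Lemma~\ref{lem:bounded-weak-maximum} at the Dirichlet endpoint, that is, that the negative part of its trace vanishes. This is supplied by the locally uniform convergence with zero boundary values obtained above.

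Boundedness of $\Omega$ enters only to guarantee the uniform charts and that $\partial\Omega\neq\varnothing$.
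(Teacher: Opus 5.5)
Your proposal is correct and follows essentially the same route as the paper's proof: a contradiction and normalization argument, a split into bulk and boundary blow-ups, and Robin--Liouville rescaling plus Lemma~\ref{lem:boundary-decay} at the Dirichlet endpoint. Then, as in the paper, you identify the weak-$*$ limit potential within $\delta$ of $f'(\phi_{\varrho_*})$ and conclude with Lemma~\ref{lem:bounded-weak-maximum}. Your only deviations are minor: taking $\delta=\mu_*/4$ suffices in the bulk case because $\mu_*\le\mu_1=|f'(1)|$, which you should state explicitly, and choosing $|w_n(x_n)|\ge1/2$ instead of an exact maximizer avoids the paper's use of boundedness of $D_\kappa$.
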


\begin{proof}
Fix $0<\delta<\min\{\mu_*/4,|f'(1)|/4\}$.  If no uniform estimate held for
this $\delta$, then for each integer $n\geq1$ we could choose
data $\kappa_n\geq n$, $\varrho_n\in[0,1]$, an admissible coefficient $q_n$,
and a bounded weak solution
$w_n^{\rm raw}$ with right-hand side $g_n^{\rm raw}$ such that
\[
 \|w_n^{\rm raw}\|_\infty>n\|g_n^{\rm raw}\|_\infty.
\]
Divide the equation by $\|w_n^{\rm raw}\|_\infty$ and pass to a subsequence
with $\varrho_n\to\varrho_*$.  We then have
\begin{equation}\label{eq:resolvent-contradiction-sequence}
 \|w_n\|_\infty=1,\qquad \|g_n\|_\infty\leq n^{-1},\qquad
 \|q_n-f'(\Phi_{\kappa_n,\varrho_n})\|_\infty\leq\delta.
\end{equation}
For each fixed $n$, standard $W^{2,p}$ regularity up to the boundary, with any
$p>d$ (Dirichlet when $\varrho_n=0$ and Robin/Neumann when
$\varrho_n>0$), gives a continuous representative of $w_n$ on
$\overline{D_{\kappa_n}}$; the regularity constants need not be uniform in
$n$ here.  Since $D_{\kappa_n}$ is bounded, choose
$x_n\in\overline{D_{\kappa_n}}$ with $|w_n(x_n)|=1$.

Suppose first that, along a subsequence,
$d_{\kappa_n}(x_n)\to\infty$.  After translating $x_n$ to
the origin, every fixed ball eventually lies inside the translated domain, and
$q_nw_n+g_n$ is uniformly bounded there.  Interior $W^{2,p}$ estimates with $p>d$, followed by a diagonal extraction,
give locally uniform convergence $w_n\to w$ and weak-star convergence $q_n\stackrel{*}{\rightharpoonup}
q_\infty$.  Since $|w_n(0)|=1$, one has $|w(0)|=1$.  The strong local
convergence of $w_n$ and weak-star convergence of $q_n$ allow passage to the
product in distributions, so
$-\Delta w-q_\infty w=0$ on $\R^d$.  On each fixed translated ball $B_R(0)$ one has
$d_{\kappa_n}(x_n+x)\ge d_{\kappa_n}(x_n)-R$, so the uniform profile tail makes
$f'(\Phi_{\kappa_n,\varrho_n})\to f'(1)$ there.  Together with
\eqref{eq:resolvent-contradiction-sequence}, this implies
$-q_\infty\geq |f'(1)|-\delta>0$.  The form bottom of
$-\Delta-q_\infty$ on $\R^d$ is positive, and
Lemma~\ref{lem:bounded-weak-maximum} applied to $w$ and $-w$ forces $w=0$,
contradicting $|w(0)|=1$.

The maximizing points must therefore stay within a bounded distance of the
boundary.  Choose nearest boundary points $p_n$, translate $p_n$ to the
origin, and rotate the inward normal to $e_d$, as in
Proposition~\ref{prop:coalescence}.  Passing to a subsequence, the transformed points
$x_n$ converge to some $x_*\in\overline{\Hh^d}$.

If $\varrho_*>0$, then $\beta_{\varrho_n}$ stays bounded.  The equations
$-\Delta w_n=q_nw_n+g_n$, together with \eqref{eq:resolvent-contradiction-sequence},
have uniformly bounded right-hand sides on every fixed boundary patch.
Standard local Robin/Neumann $W^{2,p}$ estimates, with one fixed $p>d$, give
compactness up to the boundary; hence, after a diagonal extraction,
$w_n\to w_*$ locally uniformly on $\overline{\Hh^d}$.

Suppose $\varrho_*=0$.  If infinitely many indices have $\varrho_n=0$,
pass to that subsequence; the usual local Dirichlet estimates give compactness
up to the boundary.  Otherwise, after discarding finitely many terms, we have
$\varrho_n>0$ and
$\beta_n:=\beta_{\varrho_n}\to\infty$.  In this second case we first claim that
the traces of $w_n$ vanish locally.  If not, there are boundary points $y_n$ in
one fixed chart and an $\varepsilon>0$ with $|w_n(y_n)|\geq\varepsilon$.
Rescale at $y_n$ by $\beta_n^{-1}$ exactly as in
Lemma~\ref{lem:joint-compactness}.  The rescaled functions are bounded by one
and satisfy
\[
 -\Delta \widetilde w_n
 =\beta_n^{-2}(q_nw_n+g_n),\qquad
 \partial_{\widetilde\nu_n}\widetilde w_n+\widetilde w_n=0.
\]
The right-hand side tends uniformly to zero on fixed patches.  The same local
oblique estimate used in Lemma~\ref{lem:joint-compactness} therefore yields a
bounded harmonic limit with $\partial_\nu w+w=0$ and nonzero boundary
trace.  Lemma~\ref{lem:harmonic-robin} rules this out; the argument uses no
sign condition.  Thus the traces vanish locally.  Applying
Lemma~\ref{lem:boundary-decay} to $w_n$, with
$h_n=q_nw_n+g_n$ uniformly bounded, and combining it with interior compactness
then gives local uniform convergence up to the limiting Dirichlet boundary.
Consequently, in all cases we obtain a bounded limit $w_*$ on $\Hh^d$ with
the $\varrho_*$ boundary condition.  Since the convergence holds on the
closure near $x_*$,
\[
 |w_*(x_*)|=\lim_{n\to\infty}|w_n(x_n)|=1,
\]
so $w_*$ is nonzero.

It remains to identify the limiting coefficient without losing the
$L^\infty$ perturbation bound.  Write
\[
 q_n=f'(\Phi_{\kappa_n,\varrho_n})+r_n,
 \qquad \|r_n\|_\infty\leq\delta.
\]
On every fixed half-ball the first term converges uniformly to
$f'(\phi_{\varrho_*})$.  After a diagonal weak-star extraction,
$r_n\stackrel{*}{\rightharpoonup}r_*$ locally in $L^\infty$, with
$\|r_*\|_\infty\leq\delta$.  Hence
$q_*=f'(\phi_{\varrho_*})+r_*$ and therefore
\[
 \|q_*-f'(\phi_{\varrho_*})\|_{L^\infty(\Hh^d)}\leq\delta.
\]
The local uniform convergence of $w_n$ and weak-star convergence of $q_n$
also allow passage to the equation in distributions, giving
$-\Delta w_*-q_*w_*=0$.
The potential perturbation estimate, Lemma~\ref{lem:product}, and
\eqref{eq:uniform-gap} give
\[
 \lambda(-\Delta-q_*,\Hh^d,\varrho_*)
 \geq\mu_{\varrho_*}-\delta>0.
\]
The maximum principle of Lemma~\ref{lem:bounded-weak-maximum}, applied to
$w_*$ and $-w_*$, then gives $w_*=0$.  The normalization at the limiting
maximizing point excludes this, and \eqref{eq:bounded-resolvent-estimate}
follows.

\end{proof}

\begin{proof}[Proof of Theorem~\ref{thm:curvature-correction}]
Extend $f$ to a $C^{1,\gamma}$ function on $\R$.  Only values in a fixed
neighborhood of $[0,1]$ enter the Taylor expansion, and the exact solution
remains in $(0,1]$, so the extension is immaterial.

Let $r_*>0$ be a tubular radius for $\partial\Omega$.  For
$x\in D_\kappa$ with $d_\kappa(x)<\kappa r_*$, write uniquely
\begin{equation}\label{eq:normal-coordinates}
 x=\kappa p-y\nu(p),\qquad p\in\partial\Omega,\quad
 y=d_\kappa(x),
\end{equation}
and set $\mathcal H_\kappa(x)=\mathcal H(p)$.  If
$k_1(p),\ldots,k_{d-1}(p)$ are the outward principal curvatures, the normal
coordinate formula gives
\begin{equation}\label{eq:distance-laplacian-curvature}
 \Delta d_\kappa(x)
 =-\frac1\kappa\sum_{j=1}^{d-1}
   \frac{k_j(p)}{1-yk_j(p)/\kappa}.
\end{equation}
The signed-distance regularity in this tubular neighborhood follows, for
example, from \cite{KrantzParksDistance}; the displayed Laplacian identity is
the standard principal-coordinate calculation.  Uniformly for
$0\leq y\leq M\log\kappa+1$,
\begin{equation}\label{eq:distance-curvature-error}
 \left|\Delta d_\kappa+\frac{\mathcal H_\kappa}{\kappa}\right|
 \leq C\frac{1+y}{\kappa^2},\qquad
 \nabla d_\kappa\cdot\nabla\mathcal H_\kappa=0,\qquad
 |\Delta\mathcal H_\kappa|\leq C\kappa^{-2}.
\end{equation}
The last estimate uses the $C^{4,\gamma}$ regularity of the boundary.  In
dimension one all curvature sums and the function $\mathcal H_\kappa$ are zero.

Choose $\eta\in C^\infty(\R)$ with $0\leq\eta\leq1$,
$\eta(t)=1$ for $t\leq0$, and $\eta(t)=0$ for $t\geq1$.  We use this cutoff
to extend the boundary expansion globally.  Because the profiles decay
exponentially, a logarithmic strip is enough.  From \eqref{eq:profile-response-joint-tail}, choose $M>3/c$ and set
$L_\kappa=M\log\kappa$.  For all sufficiently large $\kappa$ one then has,
uniformly in $\varrho$,
\begin{equation}\label{eq:cutoff-tail-kappa3}
 \sup_{y\geq L_\kappa}
 \left(|1-\phi_\varrho(y)|+|\phi_\varrho'(y)|+|\phi_\varrho''(y)|
 +\sum_{j=0}^2|\chi_\varrho^{(j)}(y)|\right)
 \leq C\kappa^{-3}.
\end{equation}
For large $\kappa$, also $L_\kappa+1<\kappa r_*/2$.

Now set
\begin{equation}\label{eq:corrected-approximate-state}
 A_{\kappa,\varrho}(x):=
 1+\eta(y-L_\kappa)\bigl(\phi_\varrho(y)-1\bigr)
 +\frac{\mathcal H_\kappa(x)}{\kappa}
       \eta(y-L_\kappa)\chi_\varrho(y)
\end{equation}
in the tubular neighborhood, and by $A_{\kappa,\varrho}=1$ elsewhere.
The cutoff vanishes before the edge of the tubular neighborhood, so this is a
global $C^2$ function.  The normal extension of
$\mathcal H$ is constant on normal rays, and both $\phi_\varrho$ and
$\chi_\varrho$ satisfy the same homogeneous boundary condition, so
\begin{equation}\label{eq:approximate-boundary-condition}
 \cN_\varrho A_{\kappa,\varrho}=0
 \quad\text{on }\partial D_\kappa
\end{equation}
for every $\varrho\in[0,1]$.

Its defect is
\[
 \mathcal R_{\kappa,\varrho}:=
 -\Delta A_{\kappa,\varrho}-f(A_{\kappa,\varrho}).
\]
In the core region $0\leq y\leq L_\kappa$, put
$h=\kappa^{-1}\mathcal H_\kappa\chi_\varrho$.  The
$C^{1,\gamma}$ Taylor estimate gives
\begin{equation}\label{eq:holder-taylor-remainder}
 |f(\phi_\varrho+h)-f(\phi_\varrho)-f'(\phi_\varrho)h|
 \leq C|h|^{1+\gamma}.
\end{equation}
Using $-\phi_\varrho''=f(\phi_\varrho)$,
$L_\varrho\chi_\varrho=-\phi_\varrho'$, and
$\nabla d_\kappa\cdot\nabla\mathcal H_\kappa=0$, direct differentiation gives
\begin{align*}
 \mathcal R_{\kappa,\varrho}
 ={}&-\phi_\varrho'\Delta d_\kappa
 +\frac{\mathcal H_\kappa}{\kappa}
   \bigl(-\chi_\varrho''-f'(\phi_\varrho)\chi_\varrho\bigr)\\
 &-\frac{\mathcal H_\kappa}{\kappa}
       \chi_\varrho'\Delta d_\kappa
 -\frac{\chi_\varrho}{\kappa}\Delta\mathcal H_\kappa
 +O(\kappa^{-1-\gamma}).
\end{align*}
With the sign and forcing in the definition of $\chi_\varrho$, the
first-order curvature terms cancel.  Since
$L_\varrho\chi_\varrho=-\phi_\varrho'$,
\[
 -\phi_\varrho'\Delta d_\kappa
 +\frac{\mathcal H_\kappa}{\kappa}
   \bigl(-\chi_\varrho''-f'(\phi_\varrho)\chi_\varrho\bigr)
 =-\phi_\varrho'
   \left(\Delta d_\kappa+\frac{\mathcal H_\kappa}{\kappa}\right).
\]
On the core strip, \eqref{eq:distance-curvature-error} also gives
$|\Delta d_\kappa|\leq C\kappa^{-1}$ for large $\kappa$.  The
preceding identity contributes at most
$C\kappa^{-2}(1+y)e^{-cy}$; the term containing
$\chi_\varrho'\Delta d_\kappa$ is $O(\kappa^{-2}e^{-cy})$, and the
$\Delta\mathcal H_\kappa$ term is $O(\kappa^{-3}e^{-cy})$.  Also,
$|h|\leq C\kappa^{-1}e^{-cy}$, so the Taylor remainder is
$O(\kappa^{-1-\gamma}e^{-c(1+\gamma)y})$.  Collecting these terms gives
\begin{equation}\label{eq:core-residual-bound}
 |\mathcal R_{\kappa,\varrho}|
 \leq C\bigl(\kappa^{-1-\gamma}
              +\kappa^{-2}(1+y)e^{-cy}\bigr)
 \qquad(0\leq y\leq L_\kappa).
\end{equation}
In the cutoff region $L_\kappa\leq y\leq L_\kappa+1$, every derivative
of the cutoff is bounded while all profile and response deviations appearing
after differentiating are $O(\kappa^{-3})$ by
\eqref{eq:cutoff-tail-kappa3}; since $f(1)=0$, the nonlinear term has the same
order there.  The cutoff-region defect is $O(\kappa^{-3})$, and outside the
tube it vanishes.  Uniformly in $\varrho$,
\begin{equation}\label{eq:global-residual-bound}
 \|\mathcal R_{\kappa,\varrho}\|_\infty
 \leq C\bigl(\kappa^{-1-\gamma}
              +\kappa^{-2}\bigr),
 \qquad
 \|A_{\kappa,\varrho}-\Phi_{\kappa,\varrho}\|_\infty
 \leq C\kappa^{-1}.
\end{equation}

Let $u_{\kappa,\varrho}$ be the unique state and put
$w=u_{\kappa,\varrho}-A_{\kappa,\varrho}$.  Since
$0\leq\Phi_{\kappa,\varrho}\leq1$ and
\eqref{eq:global-residual-bound} gives
$\|A_{\kappa,\varrho}-\Phi_{\kappa,\varrho}\|_\infty=O(\kappa^{-1})$, the
segment joining $A_{\kappa,\varrho}(x)$ to $u_{\kappa,\varrho}(x)\in(0,1]$
stays in one fixed neighborhood of $[0,1]$ for all large $\kappa$.  Thus the
extension of $f$ made at the start of the proof is used only on that fixed
neighborhood.  Define
\[
 q_{\kappa,\varrho}(x):=
 \int_0^1 f'\bigl(A_{\kappa,\varrho}(x)
       +t w(x)\bigr)\,dt.
\]
Proposition~\ref{prop:coalescence}, \eqref{eq:global-residual-bound}, and
uniform continuity of $f'$ give
\begin{equation}\label{eq:corrector-linearization-close}
 \sup_{\varrho\in[0,1]}
 \|q_{\kappa,\varrho}-f'(\Phi_{\kappa,\varrho})\|_\infty
 \longrightarrow0.
\end{equation}
Subtracting the equations for $u_{\kappa,\varrho}$ and
$A_{\kappa,\varrho}$ gives
\[
 -\Delta w-q_{\kappa,\varrho}w=-\mathcal R_{\kappa,\varrho},
 \qquad \cN_\varrho w=0.
\]
Lemma~\ref{lem:uniform-bounded-resolvent},
\eqref{eq:corrector-linearization-close}, and
\eqref{eq:global-residual-bound} imply
\begin{equation}\label{eq:global-corrector-error}
 \sup_{\varrho\in[0,1]}
 \|u_{\kappa,\varrho}-A_{\kappa,\varrho}\|_\infty
 \leq C\bigl(\kappa^{-1-\gamma}
              +\kappa^{-2}\bigr).
\end{equation}
For a fixed boundary strip, $R<L_\kappa$ for large $\kappa$; there
\eqref{eq:corrected-approximate-state} is simply the uncut expansion.
Equation~\eqref{eq:global-corrector-error} gives
\eqref{eq:intro-curvature-expansion}, and multiplication by $\kappa$ gives
\eqref{eq:intro-scaled-curvature-limit}.
\end{proof}

\begin{corollary}[Quantitative stability of the nonlinear states]
\label{cor:quantitative-solution-stability}
Under the hypotheses of Theorem~\ref{thm:curvature-correction}, there are
$C>0$ and $\kappa_{\rm nl}>0$ such that
\begin{equation}\label{eq:quantitative-solution-stability}
 \sup_{\varrho\in[0,1]}
 \left|
 \lambda\bigl(-\Delta-f'(u_{\kappa,\varrho}),D_\kappa,\varrho\bigr)
 -\mu_\varrho
 \right|
 \leq C\bigl(\kappa^{-1/2}+\kappa^{-\gamma}\bigr)
 \qquad(\kappa\geq\kappa_{\rm nl}).
\end{equation}
\end{corollary}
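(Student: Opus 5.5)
The plan is to combine the quantitative spectral convergence of Theorem~\ref{thm:spectral-convergence} with the perturbation estimate \eqref{eq:potential-lipschitz}. The only new input is a quantitative bound on $\|f'(u_{\kappa,\varrho})-f'(\Phi_{\kappa,\varrho})\|_\infty$. By the triangle inequality,
\[
 \bigl|\lambda(-\Delta-f'(u_{\kappa,\varrho}),D_\kappa,\varrho)-\mu_\varrho\bigr|
 \leq \bigl|\Lambda_{\kappa,\varrho}-\mu_\varrho\bigr|
 +\|f'(u_{\kappa,\varrho})-f'(\Phi_{\kappa,\varrho})\|_{L^\infty(D_\kappa)}.
\]
Here $\Omega$ is bounded with nonempty boundary, so Theorem~\ref{thm:spectral-convergence} bounds the first term by $C\kappa^{-1/2}$ uniformly in $\varrho$ for $\kappa\geq\kappa_{\rm sp}$.

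For the second term, I use that $f'$ is $\gamma$-H\"older on $[0,1]$ and that both $u_{\kappa,\varrho}$ and $\Phi_{\kappa,\varrho}$ take values in $[0,1]$. This gives
\[
 \|f'(u_{\kappa,\varrho})-f'(\Phi_{\kappa,\varrho})\|_\infty\leq[f']_{C^{0,\gamma}}\|u_{\kappa,\varrho}-\Phi_{\kappa,\varrho}\|_\infty^\gamma .
\]
It therefore suffices to prove $\|u_{\kappa,\varrho}-\Phi_{\kappa,\varrho}\|_\infty\leq C\kappa^{-1}$ uniformly in $\varrho$. This comes from the proof of Theorem~\ref{thm:curvature-correction}. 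The global estimate \eqref{eq:global-corrector-error} gives $\|u_{\kappa,\varrho}-A_{\kappa,\varrho}\|_\infty\leq C(\kappa^{-1-\gamma}+\kappa^{-2})$ on all of $D_\kappa$, not merely on a fixed strip. The second bound in \eqref{eq:global-residual-bound} gives $\|A_{\kappa,\varrho}-\Phi_{\kappa,\varrho}\|_\infty\leq C\kappa^{-1}$. Adding the two yields the $O(\kappa^{-1})$ bound, hence an $O(\kappa^{-\gamma})$ potential perturbation. Combining the two terms gives \eqref{eq:quantitative-solution-stability} with $\kappa_{\rm nl}$ the larger of $\kappa_{\rm sp}$ and the threshold of Theorem~\ref{thm:curvature-correction}.

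The one point to check is that the bound $\|A_{\kappa,\varrho}-\Phi_{\kappa,\varrho}\|_\infty=O(\kappa^{-1})$ holds globally and uniformly in $\varrho$, including outside the tube and in the cutoff region. In the tube it follows from the uniform boundedness of $\chi_\varrho$ and from \eqref{eq:cutoff-tail-kappa3}. Outside the tube $A_{\kappa,\varrho}=1$, and $1-\Phi_{\kappa,\varrho}$ is exponentially small in $\kappa$ by \eqref{eq:uniform-tail} with the exponential tail. Everything else is a direct assembly of earlier results.
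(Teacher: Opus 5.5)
Your proposal is correct and follows the paper's own proof. Like the paper, you use the triangle inequality with Theorem~\ref{thm:spectral-convergence} for the $C\kappa^{-1/2}$ term, then combine the global bound $\|u_{\kappa,\varrho}-\Phi_{\kappa,\varrho}\|_\infty\le C\kappa^{-1}$ from \eqref{eq:global-residual-bound} and \eqref{eq:global-corrector-error} with the $\gamma$-H\"older continuity of $f'$ and \eqref{eq:potential-lipschitz} to get the $C\kappa^{-\gamma}$ term; your check that $A_{\kappa,\varrho}-\Phi_{\kappa,\varrho}=O(\kappa^{-1})$ holds globally (core, cutoff region, and outside the tube) just makes explicit a step the paper states without detail.
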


\begin{proof}
From \eqref{eq:global-residual-bound} and
\eqref{eq:global-corrector-error} we already have the global estimate
\[
 \sup_{\varrho\in[0,1]}
 \|u_{\kappa,\varrho}-\Phi_{\kappa,\varrho}\|_\infty
 \leq C\kappa^{-1}.
\]
Since $f'$ is $C^{0,\gamma}$, the perturbation estimate
\eqref{eq:potential-lipschitz} shows that passing from
$f'(\Phi_{\kappa,\varrho})$ to $f'(u_{\kappa,\varrho})$ changes the spectral
bottom by at most $C\kappa^{-\gamma}$.  Theorem~\ref{thm:spectral-convergence} supplies the remaining
$C\kappa^{-1/2}$ error, and the stated bound follows.
\end{proof}

\end{document}